\documentclass[11pt, a4paper]{amsart}

\usepackage{amsmath, amssymb, amsfonts, amsthm}
\usepackage{mathrsfs}
\usepackage{verbatim}
\usepackage[backend=biber]{biblatex}
\renewbibmacro{in:}{}

\usepackage[dvipsnames]{xcolor}
\usepackage{graphicx}
\usepackage{hyperref}
\usepackage{tikz}
\usetikzlibrary{cd}
\tikzset{
node distance=1.5cm,
la/.style={scale=0.8}
}

\usepackage{geometry}
\usepackage{appendix}

\newcommand{\OSA}{{\overline{\mathscr{A}}}}

\newcommand{\SA}{{\mathscr{A}}}
\newcommand{\SB}{{\mathscr{B}}}

\newcommand{\SE}{{\mathscr{E}}}
\newcommand{\SF}{{\mathscr{F}}}
\newcommand{\SG}{{\mathscr{G}}}

\newcommand{\SI}{{\mathscr{I}}}

\newcommand{\SL}{{\mathscr{L}}}

\newcommand{\SP}{{\mathscr{P}}}

\newcommand{\SY}{{\mathscr{Y}}}

\newcommand{\BC}{{\mathbb{C}}}

\newcommand{\BN}{{\mathbb{N}}}

\newcommand{\BP}{{\mathbb{P}}}
\newcommand{\BQ}{{\mathbb{Q}}}
\newcommand{\BR}{{\mathbb{R}}}

\renewcommand{\phi}{\varphi}

\newcommand{\Res}{\operatorname{Res}}
\newcommand{\im}{\operatorname{im}}
\newcommand{\codim}{\operatorname{codim}}
\newcommand{\coker}{\operatorname{coker}}
\newcommand{\Gr}{\operatorname{Gr}}
\newcommand{\can}{\operatorname{can}}

\theoremstyle{plain}
\newtheorem{thm}{Theorem}[section]

\newtheorem{lemma}[thm]{Lemma}
\newtheorem{prop}[thm]{Proposition}
\newtheorem{cor}[thm]{Corollary}

\newtheorem{theorem}[thm]{Theorem}

\theoremstyle{definition}
\newtheorem{definition}[thm]{Definition}
\newtheorem{example}[thm]{Example}
\newtheorem{remark}[thm]{Remark}

\usepackage[textwidth=26mm, textsize = footnotesize]{todonotes}
\title{Affine Hyperplane Arrangements at Finite Distance}

\author{Anaëlle Pfister}
\address{
Max Planck Institute for Mathematics in the Sciences, Leipzig, Germany}
\email{anaelle.pfister@mis.mpg.de}
\date{\today}

\begin{document}

\begin{abstract}
We study the relative homology group of an affine hyperplane arrangement and its Poincaré dual, the cohomology at finite distance of the complement. We give an Orlik--Solomon-type description of the latter, and identify it with the vector space of logarithmic forms having vanishing residues at infinity. To this end, we introduce a partial version of wonderful compactifications, which could be relevant in other contexts where blow-ups only occur at infinity. Finally, we show that the cohomology at finite distance coincides with the vector space of canonical forms in the sense of positive geometry.
\end{abstract}

\maketitle

\section{Introduction}
Let us consider a collection of distinct hyperplanes $\SA$ in $\BC^n,$ which we call an affine hyperplane arrangement. A central object in the study of such arrangements is the cohomology of the complement $\BC^n \setminus \SA$ following the foundational work of Arnol'd \cite{arnold2013cohomology}, Brieskorn \cite{brieskorn2006groupes}, and Orlik--Solomon \cite{Orlik-Solomon}.
Another natural cohomological invariant of the pair $(\BC^n, \SA),$ which, to the best of our knowledge, has not yet been studied, is the cohomology group
$$H^k_{< \infty}(\BC^n \setminus \SA):=H^k_c(\BC^n, Rj_* \BQ_{\BC^n \setminus \SA}),$$
where $j: \BC^n \setminus \SA \hookrightarrow \BC^n$ denotes the open immersion. We refer to it as \emph{cohomology group at finite distance of $\BC^n \setminus \SA$} (with rational coefficients).
This terminology is motivated by the following heuristic interpretation: if one temporarily replaces the constant sheaf $\BQ_{\BC^n \setminus \SA}$ by $\BR_{\BC^n \setminus \SA},$ then $H^n_{< \infty}(\BC^n \setminus \SA)$ can be computed using the complex of smooth real $n$-forms $\omega$ on $\BC^n \setminus \SA$ for which there exists a compact $K \subseteq \BC^n$ such that $\omega$ vanishes outside $K$. This is notably different from the compactly supported cohomology of $\BC^n \setminus \SA$ where the compact is required to live inside $\BC^n \setminus \SA.$
This intuitive description is made precise in Appendix \ref{section3.0}.

The cohomology at finite distance is isomorphic to the relative homology by Poincaré duality:
\begin{equation}
	\label{Poincare1}
	H^k_{< \infty}(\BC^n \setminus \SA) \simeq H_{2n-k}(\BC^n, \SA).
\end{equation}

\begin{remark}
Note that for $\SP \subseteq \BP_\BC^n,$ a projective hyperplane arrangement, we have the following isomorphisms:
$$H^k_c(\BP^n_\BC, Rj'_* \BQ_{\BP^n_\BC \setminus \SP})\simeq H^k_c(\BP^n_\BC \setminus \SP) \simeq H^k(\BP^n_\BC \setminus \SP) \simeq H_{2n-k}(\BP^n_\BC, \SP),$$
where $j': \BP^n_\BC \setminus \SP \to \BP^n_\BC$ denotes the open immersion. The first two isomorphisms follow from the compactness of $\BP^n_\BC$, while the last one is given by Poincaré duality. Thus, in the projective setting, the cohomology at finite distance does not produce additional information beyond the usual cohomology of the complement.
\end{remark}

The first result of this article is an Orlik--Solomon-type description of the cohomology at finite distance of the complement.
For an affine hyperplane arrangement $\SA$, the Orlik--Solomon algebra $A_\bullet(\SA)$ provides a combinatorial description of the cohomology of the complement.
This algebra is defined purely combinatorially and yields a canonical isomorphism: $$H^\bullet(\BC^n \setminus \SA) \simeq A_\bullet(\SA).$$
For our description, we also need the maps $\partial: A_\bullet(\SA) \to A_{\bullet-1}(\SA)$ making $A_\bullet(\SA)$ a complex. Note that the map $\partial: A_\bullet(\SA) \to A_{\bullet-1}(\SA)$ is not a derivation for non-central arrangements.

\begin{theorem}[\ref{thmcharacterization} and \ref{propchar}]
\label{char_intro}
Let $\SA \subseteq \BC^n$ be an essential affine hyperplane arrangement.
Then, the only non-vanishing cohomology group is the middle one, i.e., $$H^k_{< \infty}(\BC^n \setminus \SA)=0 \text{ if } k \ne n.$$
Furthermore, $$H^n_{< \infty}(\BC^n \setminus \SA) \simeq \ker(\partial: A_n(\SA) \to A_{n-1}(\SA)).$$
\end{theorem}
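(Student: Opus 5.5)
Via the Poincaré duality isomorphism \eqref{Poincare1}, it suffices to compute the relative homology $H_{2n-k}(\BC^n,\SA)$, where we identify $\SA$ with the union of its hyperplanes. Since $\BC^n$ is contractible, the long exact sequence of the pair gives $H_{m}(\BC^n,\SA)\simeq \tilde H_{m-1}(\SA)$. The task therefore becomes computing the reduced homology of the union of hyperplanes. For this I will use the nerve lemma (or a Mayer--Vietoris spectral sequence) for the closed cover of $\SA$ by its hyperplanes $H_1,\dots,H_N$. Every nonempty intersection $H_{i_0}\cap\dots\cap H_{i_p}$ is an affine subspace, hence contractible. So $\SA$ is homotopy equivalent to the nerve $N(\SA)$, the simplicial complex on $\{1,\dots,N\}$ whose faces are the subsets $S$ with $\bigcap_{i\in S}H_i\neq\emptyset$.

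Next I will show that $N(\SA)$ has reduced homology only in degree $n-1$. Here I use essentiality: nonempty intersections have codimension at most $n$, and some have codimension exactly $n$ (points). I expect $N(\SA)$ to be homotopy equivalent to a wedge of $(n-1)$-spheres. One route is a deletion--restriction induction on the number of hyperplanes. Removing a hyperplane $H$ gives $N(\SA)=N(\SA\setminus H)\cup \operatorname{cone}(N(\SA^H))$, where $\SA^H$ is the restriction arrangement in $H\simeq\BC^{n-1}$. If $\SA\setminus H$ and $\SA^H$ stay essential, induction plus the resulting long exact sequence concentrates the homology in degree $n-1$. The non-essential cases need separate care: a non-essential arrangement has a contractible union (it is a product with a line, and the nerve is a cone-like contractible complex), which also feeds the induction correctly. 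Alternatively, one can invoke the known topology of the order complex of the poset of intersections (geometric semilattice, shellable, hence Cohen--Macaulay), which gives the wedge-of-spheres statement directly. Translating back, $H_m(\BC^n,\SA)$ vanishes unless $m=n$, so $H^k_{<\infty}=0$ for $k\neq n$.

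For the identification with $\ker\partial$, I will use the spectral sequence computation explicitly. The Mayer--Vietoris complex of the cover computes $\tilde H_\bullet(\SA)$ as the homology of the simplicial chain complex of the nerve. Its top interesting term is spanned by $n$-subsets $S$ with nonempty intersection (for essential arrangements, points in general position, i.e.\ independent $S$). The Orlik--Solomon algebra $A_\bullet(\SA)$ is a quotient of the exterior algebra on generators $e_i$ by relations indexed by dependent sets and empty intersections, with $\partial e_S=\sum \pm e_{S\setminus i}$. Dualizing, $H_n(\BC^n,\SA)\simeq \tilde H_{n-1}(N(\SA))$, and I will match the simplicial boundary on nerve chains with $\partial$ on $A_\bullet$ modulo dependencies. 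The cleaner way is to observe that $(A_\bullet(\SA),\partial)$ is acyclic-in-low-degrees and computes exactly this homology: $\ker(\partial\colon A_n\to A_{n-1})$ is the top homology because $A_{n+1}=0$ for an arrangement in $\BC^n$.

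The main obstacle is making this matching canonical. The nerve chain complex and $A_\bullet$ are not literally equal, since $A_\bullet$ kills dependent sets. The way around this is to replace the nerve by the order complex of the intersection semilattice, or to use the no-broken-circuit basis. An nbc-basis argument should show that $(A_\bullet,\partial)$ and the Mayer--Vietoris complex are quasi-isomorphic, and then the rank of $\ker\partial$ equals $|\beta|$-type counts of $\tilde H_{n-1}$. I would first try to construct the quasi-isomorphism degreewise via the Brieskorn decomposition $A_p=\bigoplus_{\codim X=p}A_X$, comparing each $A_X$ with the local homology of the intersection poset at $X$.
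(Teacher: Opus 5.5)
\textbf{Vanishing part.} Your argument here takes a different route from the paper, and it is essentially sound. The paper first identifies $H_\bullet(\BC^n,\SA)$ with the homology of $(A_\bullet,\partial)$ and then runs deletion--restriction algebraically. You instead use $H_m(\BC^n,\SA)\simeq\tilde H_{m-1}(\bigcup\SA)$ and the nerve lemma, followed by either shellability of geometric semilattices or deletion--restriction on the nerve. Two corrections are needed.

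First, your claim that a non-essential arrangement has contractible union is false. The arrangement $\{y=0\},\{y=1\}\subseteq\BC^2$ has disconnected union, and $\{x=0\},\{y=0\},\{x+y=1\}\subseteq\BC^3$ has union homotopy equivalent to $S^1$. You do not need this claim. If $m>n$, choose $H$ whose linear form lies outside a fixed basis of the linear forms of $\SA$. Then $\SA\setminus\{H\}$ is essential, and the restriction of an essential arrangement is always essential; this is exactly the paper's choice of $L$.

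Second, the link of $H$ in $N(\SA)$ is not $N(\SA^H)$ itself, because several hyperplanes can restrict to the same one. It is only homotopy equivalent to it, via the nerve lemma.

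\textbf{The gap.} The genuine gap is the second assertion, which is the real content of the theorem. Your ``cleaner way'' presupposes that $(A_\bullet,\partial)$ computes $H_\bullet(\BC^n,\SA)$, which is what has to be proved. The remaining sentences list strategies rather than give an argument.

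Concretely, $[S]\mapsto e_S$ defines a surjective chain map from the augmented nerve chains to $(A_\bullet,\partial)$. It is a chain map precisely because of the Orlik--Solomon relations for dependent sets. However, once the arrangement is not in general position, the nerve has dependent faces, including faces of dimension $>n-1$. So the whole point is to show that this map is a quasi-isomorphism, and an nbc count does not give that.

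The paper obtains it from Looijenga's exact sequence of sheaves
\[0\to j_!\BQ\to\BQ_{\BC^n}\to\bigoplus_{S\in\SL_1(\SA)}(i_S)_*\BQ_S\otimes A_1(\SA_S)^\vee\to\bigoplus_{S\in\SL_2(\SA)}(i_S)_*\BQ_S\otimes A_2(\SA_S)^\vee\to\cdots.\]
Its exactness is, stalkwise, the acyclicity of the central complexes $(A_\bullet(\SA_x),\partial)$ of Lemma~\ref{lemmasequence}. Since every stratum is contractible, the associated spectral sequence has a single row. This gives $H_p(\BC^n,\SA)\simeq H_p(A_\bullet,\partial)$ in every degree, and $A_{n+1}=0$ then yields $\ker\partial$ in degree $n$.

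If you want to stay combinatorial, filter the nerve chains by $\codim\bigcap S$. The graded piece at a stratum $X$ of codimension $p$ is the chain complex of the full simplex on $\SA_X$ relative to its non-spanning subsets. Its homology is concentrated in the degree $|S|=p$ (by the crosscut theorem and Folkman's theorem), and there it equals $A_p(\SA_X)$ by the Orlik--Solomon presentation. The spectral sequence therefore collapses onto $(A_\bullet,\partial)$, but this has to be carried out, not announced.

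Even a bare dimension count, giving only an abstract isomorphism, would require two facts absent from your proposal:
\begin{itemize}
\item acyclicity of $(A_\bullet,\partial)$ below degree $n$;
\item the identity $\tilde\chi(N(\SA))=-\chi_\SA(1)$.
\end{itemize}
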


A consequence of Orlik--Solomon's result is the following. The Betti numbers of $\BC^n \setminus \SA$ are encoded in an invariant of the arrangement, namely the characteristic polynomial $\chi_\SA(t)$ (see also \cite[Theorem 3.68]{Orlik-Terao}). Similarly, in the present setting, Theorem \ref{char_intro} implies that the dimension of $H^n_{< \infty}(\BC^n \setminus \SA)$ is similarly captured by the characteristic polynomial as follows (see Proposition \ref{propchar}):
\begin{equation}
	\label{dim}
	\dim(H^n_{< \infty}(\BC^n \setminus \SA))\dim(H_n(\BC^n, \SA))=(-1)^n\chi_{\SA}(1).
\end{equation}
When $\SA$ is the complexification of a real arrangement $\SA_\BR$ in $\BR^n$, this dimension coincides with the number of bounded regions of $\SA_\BR$ by Zaslavsky's theorem (see \cite[Theorem C]{zaslavsky1997facing}).
In this article, we prove that the bounded regions of the real arrangement $\SA_\BR$ form a basis of the relative homology group $H_n(\BC^n, \SA)$ (see Proposition \ref{propboundedreg}), in agreement with \eqref{dim}.

We illustrate these results in the following example.

\begin{example}
\label{runningexample}
Consider the hyperplane arrangement $\SA=\{L_1,L_2,L_3,L_4,L_5\} \subseteq \BC^2$ where $$L_1=\{x_1=0\}, \ L_2=\{x_1-1=0\}, \ L_3=\{x_2=0\}, L_4=\{x_2-1=0\} \text{ and } L_5=\{x_1-x_2=0\}$$ as illustrated in Figure \ref{figrun}. The homogeneous components of the Orlik--Solomon algebra are given by
$$A_1(\SA)= \BQ e_1 \oplus \BQ e_2 \oplus \BQ e_3 \oplus \BQ e_4 \oplus \BQ e_5$$
and $$ A_2(\SA)= \frac{\BQ e_1 \wedge e_3 \oplus \BQ e_1 \wedge e_4 \oplus \BQ e_1 \wedge e_5 \oplus \BQ e_2 \wedge e_3 \oplus \BQ e_2 \wedge e_4 \oplus \BQ e_2 \wedge e_5 \oplus \BQ e_3 \wedge e_5 \oplus \BQ e_4 \wedge e_5}{(e_1 \wedge e_3-e_1 \wedge e_5+e_3 \wedge e_5, e_2 \wedge e_4-e_2 \wedge e_5+e_4 \wedge e_5)}.$$ The differential $\partial: A_2(\SA) \to A_1(\SA)$ is defined on generators by $\partial(e_i \wedge e_j)=e_j-e_i,$ where $e_i \wedge e_j$ is a generator of $A_2(\SA).$
Therefore, the cohomology at finite distance of the complement satisfies: $$H^2_{< \infty}(\BC^n \setminus \SA) \simeq \ker(\partial)=\BQ(e_1 \wedge e_4- e_1 \wedge e_5 +e_4 \wedge e_5) \oplus \BQ (e_2 \wedge e_3- e_2 \wedge e_5 +e_3 \wedge e_5).$$ In particular, its dimension is 2, which coincides with the number of bounded regions. Moreover, each generator can be interpreted as a sum of differential forms associated with the vertices of one of the two bounded regions. This phenomenon is a consequence of Poincaré duality \eqref{Poincare1}, assigning to each bounded region, a class of forms in $H^n_{< \infty}(\BC^n \setminus \SA)$ and will be made precise in Section \ref{Section4} in the discussion about canonical forms.

\begin{figure}

\tikzset{every picture/.style={line width=0.75pt}}

\begin{tikzpicture}[x=0.75pt,y=0.75pt,yscale=-1,xscale=1]

\draw    (51,41.67) -- (51,226.67) ;
 
\draw    (121,41.67) -- (121,226.67) ;

\draw    (166,181) -- (5,180) ;
 
\draw    (169,111) -- (8,110) ;

\draw    (17.33,214.33) -- (190.33,40.67) ;

\draw (41,234) node [anchor=north west][inner sep=0.75pt]   [align=left] {$L_{1}$};

\draw (115,234) node [anchor=north west][inner sep=0.75pt]   [align=left] {$L_{2}$};

\draw (179,171) node [anchor=north west][inner sep=0.75pt]   [align=left] {$L_{3}$};

\draw (185,105) node [anchor=north west][inner sep=0.75pt]   [align=left] {$L_{4}$};

\draw (200,27) node [anchor=north west][inner sep=0.75pt]   [align=left] {$L_{5}$};

\end{tikzpicture}
\caption{The arrangement $\SA.$}
\label{figrun}
\end{figure}
\end{example}

A direct consequence of Theorem \ref{char_intro} is the following inclusion:
$$H^n_{< \infty}(\BC^n \setminus \SA) \hookrightarrow H^n(\BC^n \setminus \SA).$$ In Section \ref{Section3}, we characterize the image of this map. The main result of this section is an interpretation of the cohomology at finite distance in terms of residues at infinity.
We present two different such descriptions.
Let $X$ be a compactification of $\BC^n$, and denote by $Y:= X \setminus \BC^n$ the divisor at infinity and by $\OSA$ the closure of $\SA$ in $X$. Here, we do not require the divisor $Y \cup \OSA$ to be normal crossing, but a weaker condition, which we call \emph{being locally a product for $\SA$} (see Definition \ref{locallyaproduct}). 
Under this assumption, there is an isomorphism (see Lemma \ref{lemmainclusion}):
$$H^n_{< \infty}(\BC^n \setminus \SA) \simeq H^n(X \setminus \OSA, Y \setminus (Y \cap \OSA)).$$
Let $(Y_i)_{i \in I}$ denote the irreducible components of $Y$ and set $$Y_i^\circ=Y_i \setminus \left(\bigcup_{i \ne j \in I}Y_i \cap (Y_j \cup \OSA)\right).$$ Under the additional assumption that $Y \setminus (Y \cap \OSA)$ is a normal crossing divisor, the next theorem describes the cohomology at finite distance as the joint kernel of the residue morphism along the irreducible components of $Y$.

\begin{theorem}(\ref{theoremres})
\label{Residue_intro}
There is an isomorphism:
$$H^n(X \setminus \OSA, Y \setminus (Y \cap \OSA)) \simeq \ker \left( \bigoplus_{i \in I}\Res_{Y_i^\circ}: H^n(\BC^n \setminus \SA) \to \bigoplus_{i \in I}H^{n-1} \left( Y_i^\circ \right) \right).$$
\end{theorem}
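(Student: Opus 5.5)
We start from the long exact sequence of the pair $(U, V)$, where $U:=X \setminus \OSA$ and $V:=Y \setminus (Y \cap \OSA)=U \cap Y$:
$$H^{n-1}(U) \to H^{n-1}(V) \to H^n(U,V) \to H^n(U) \to H^n(V).$$
The strategy is to show two things. First, $H^n(U,V) \to H^n(U)$ is injective, and composing it with restriction identifies $H^n(U,V)$ with a subspace of $H^n(\BC^n \setminus \SA)$. Second, this subspace is exactly the joint kernel of the residues.

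For injectivity, the natural route is through Lemma \ref{lemmainclusion} together with Theorem \ref{char_intro}. The group $H^n(U,V)$ is isomorphic to $H^n_{<\infty}(\BC^n\setminus\SA)$, and the latter already embeds into $H^n(\BC^n\setminus \SA)$ via the Orlik--Solomon description. One must check that this embedding is the composite $H^n(U,V)\to H^n(U)\to H^n(U\setminus V)=H^n(\BC^n\setminus\SA)$. Granting that compatibility, which is a naturality check with $Rj_*$ and the forget-supports map, the composite is injective. Its image lies in the kernel of $H^n(\BC^n\setminus\SA) \to H^{n+1}_V(U)$, since elements coming from $H^n(U)$ map to zero there.

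Next we identify the Gysin/residue piece. The divisor $V$ is normal crossing in the smooth variety $U$, with components $V_i=Y_i\setminus \OSA$. The complement $U\setminus V=\BC^n\setminus\SA$ has a Leray/weight spectral sequence, equivalently Deligne's logarithmic complex. This gives the exact sequence
$$H^n(U) \to H^n(\BC^n\setminus\SA) \xrightarrow{\oplus \Res} \bigoplus_i H^{n-1}(V_i^{\circ}),$$
where $V_i^\circ=V_i\setminus\bigcup_{j\neq i}V_j = Y_i^\circ$. Exactness at the middle term requires comparing $H^{n+1}_V(U)$ with the $E_1$-term $\bigoplus_i H^{n-1}(Y_i^\circ)$ in the residue filtration. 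The kernel of $\oplus\Res$ equals $W$-level classes that extend over the generic points of each $V_i$. Extension across higher codimension strata then follows by the purity-type argument. Concretely, the kernel of the map to $\bigoplus_i H^{n-1}(Y_i^\circ)$ agrees with the kernel of $H^n(U\setminus V)\to H^{n+1}_V(U)$ up to contributions from the deeper strata $V_I$, $|I|\ge2$. Those contributions live in $H^{n+1-2|I|+\dots}$ of lower-dimensional varieties.

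The main obstacle is exactly this comparison: showing the iterated residues to deeper strata impose no new conditions. I would first try to use that $H^k_{<\infty}=0$ for $k\neq n$ and $H^n(\BC^n\setminus \SA)$ is pure of weight $2n$ (Hodge–Tate, forms $d\log$). Residues of pure weight $2n$ classes along $V_i$ land in weight $2n-2$, the pure part of $H^{n-1}(Y_i^\circ)$. A class killed by all first residues then lies in $W_{2n}$ of the image of $H^n(U)$. Dimension counting closes the argument: via Theorem \ref{char_intro}, $\dim H^n(U,V)=(-1)^n\chi_\SA(1)$. Alternatively, one can verify directly that $\ker\oplus\Res$ equals the image of $H^n(U,V)$ by showing that $H^{n-1}(U)\to H^{n-1}(V)$ is surjective on the relevant part, using the vanishing of $H^{n-1}(U,V)$. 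If the weight argument proves delicate, I would instead compute with logarithmic forms on $U$ along $V$ and use the Orlik--Solomon basis explicitly.
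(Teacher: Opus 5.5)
Your first half is sound. With $U=X\setminus\OSA$ and $V=Y\setminus(Y\cap\OSA)$ as in your notation, $H^n(U,V)$ injects into $H^n(\BC^n\setminus\SA)$ through $H^n(U)$ (Lemma~\ref{lemmainclusion} and \eqref{adjunction}). It is therefore a sub-mixed Hodge structure of a pure weight-$2n$ group, and its image lies in $\ker\bigl(\bigoplus_i\Res_{Y_i^\circ}\bigr)$.

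\textbf{The gap.} The reverse inclusion, which is the real content of the theorem, is never proved. You call it ``the main obstacle'' and list routes without carrying any out, and two of them cannot work:
\begin{itemize}
\item Dimension counting needs an independent upper bound on $\dim\ker\bigl(\bigoplus_i\Res_{Y_i^\circ}\bigr)$ for an arbitrary compactification $X$. That is exactly what is being computed.
\item The vanishing of $H^{n-1}(U,V)$ gives injectivity, not surjectivity, of $H^{n-1}(U)\to H^{n-1}(V)$. Surjectivity is equivalent to the injectivity of $H^n(U,V)\to H^n(U)$ that you already have, and it says nothing about residues.
\end{itemize}
Moreover, suppose you did show $\ker\bigl(\bigoplus_i\Res_{Y_i^\circ}\bigr)=\im\bigl(H^n(U)\to H^n(\BC^n\setminus\SA)\bigr)$. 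You would still need this image to coincide with the image of $H^n(U,V)$, and you do not address that.

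\textbf{The missing idea.} Apply $\Gr^W_{2n}$ throughout; this handles both points, and it is what the paper does.
\begin{enumerate}
\item Since $\dim V=n-1$, we have $\Gr^W_{2n}H^k(V)=0$ for all $k$. Exactness of $\Gr^W_{2n}$ on the long exact sequence of the pair then gives $\Gr^W_{2n}H^n(U,V)\simeq\Gr^W_{2n}H^n(U)$. By purity this reads $H^n(U,V)\simeq\Gr^W_{2n}H^n(U)$, which settles the image comparison.
\item Consider the residue spectral sequence $E_1^{p,q}=\bigoplus_{|J|=p}H^{q-p}(Y_J^\circ)(-p)\Rightarrow H^{p+q}(U)$. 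Because $Y_J^\circ$ is smooth, each term with $q<n$ has weights at most $2(q-p)+2p=2q<2n$.
\item Hence, after applying $\Gr^W_{2n}$, the differentials $d_r\colon E_r^{0,n}\to E_r^{r,n-r+1}$ with $r\ge 2$ vanish. The pieces $E_\infty^{p,n-p}$ with $p\ge 1$ contribute nothing. So $\Gr^W_{2n}H^n(U)=\ker(d_1)=\ker\bigl(\bigoplus_i\Res_{Y_i^\circ}\bigr)$.
\end{enumerate}
This is the precise form of ``deeper strata impose no new conditions.'' Your remark that first residues of weight-$2n$ classes land in top weight does not deliver it. What matters is that the targets of the \emph{higher} differentials have weight strictly less than $2n$.
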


Theorems \ref{char_intro} and \ref{Residue_intro} give two distinct descriptions of $H^n_{< \infty}(\BC^n \setminus \SA)$ as the kernel of a linear map defined on $H^n(\BC^n \setminus \SA).$ We show that these two descriptions coincide for a particular choice of compactifications of $\BC^n,$ called partial wonderful compactifications at infinity (see Definition \ref{lemcomp1} for a precise definition).
The classical definition of wonderful compactifications due to de Concini and Procesi in \cite{de1995wonderful} produces a compactification of $\BC^n \setminus \SA$ whose boundary is a normal crossing divisor. In our setting, however, we need only a compactification of $\BC^n$ satisfying the weaker condition of locally a product for $\SA$.
In particular, we only perform blow-ups along strata that lie at infinity. Accordingly, we adapt the construction of de Concini and Procesi as follows.
First, we compactify $\BC^n$ to $\BP^n_\BC$ by adding a hyperplane at infinity, denoted $L_\infty$. We obtain a projective arrangement $\tilde{\SA} \subseteq \BP^n_\BC.$ Then, we iteratively blow-up along a selected subset $\SG$ of strata of $\tilde{\SA} \cap L_\infty$ to obtain $X,$ the \emph{partial wonderful compactification at infinity}. It is locally a product for $\SA.$ The irreducible components of $Y=X \setminus \BC^n,$ the divisor at infinity, are indexed by $\SG$, i.e.,
$$Y = \bigcup_{S \in \SG}Y_S.$$
The set $\SG$ contains at least all zero-dimensional strata of $\tilde{\SA} \cap L_\infty;$ we denote this subset by $\SG_0.$
Equivalently, the set $\SG_0$ can be interpreted as the set of directions of the arrangement $\SA,$ a notion that depends only on the affine arrangement $\SA$.

\begin{figure}[h]
\tikzset{every picture/.style={line width=0.75pt}}

\begin{tikzpicture}[x=0.70pt,y=0.70pt,yscale=-1,xscale=1]

\draw    (303.82,67.04) -- (304.64,215.55) ;

\draw    (312.92,62) -- (258.42,201.94) ;

\draw    (220.45,113.98) -- (388.01,166.33) ;

\draw    (226.25,172.44) -- (391.34,158.06) ;

\draw [color={rgb, 255:red, 208; green, 2; blue, 27 }  ,draw opacity=1 ]   (293.91,73.35) -- (391.24,186.56) ;

\draw    (51,41.67) -- (51,226.67) ;
 
\draw    (121,41.67) -- (121,226.67) ;

\draw    (166,181) -- (5,180) ;

\draw    (169,111) -- (8,110) ;

\draw    (539,81.19) -- (539.64,223.55) ;

\draw    (528.91,81.35) -- (493.42,209.94) ;

\draw    (455.45,121.98) -- (630,158.19) ;

\draw    (461.25,180.44) -- (631,172.19) ;
 
\draw [color={rgb, 255:red, 208; green, 2; blue, 27 }  ,draw opacity=1 ]   (551,79.19) -- (633,144.19) ;

\draw  [draw opacity=0] (551.98,52.39) .. controls (554.68,54.47) and (557.06,57.05) .. (558.99,60.1) .. controls (567.86,74.09) and (563.7,92.63) .. (549.7,101.49) .. controls (535.7,110.36) and (517.17,106.2) .. (508.31,92.2) .. controls (506.38,89.15) and (505.06,85.89) .. (504.34,82.57) -- (533.65,76.15) -- cycle ; \draw  [color={rgb, 255:red, 245; green, 166; blue, 35 }  ,draw opacity=1 ] (551.98,52.39) .. controls (554.68,54.47) and (557.06,57.05) .. (558.99,60.1) .. controls (567.86,74.09) and (563.7,92.63) .. (549.7,101.49) .. controls (535.7,110.36) and (517.17,106.2) .. (508.31,92.2) .. controls (506.38,89.15) and (505.06,85.89) .. (504.34,82.57) ;  

\draw  [draw opacity=0] (627.48,188.09) .. controls (624.08,187.8) and (620.68,186.93) .. (617.41,185.42) .. controls (602.37,178.46) and (595.82,160.63) .. (602.77,145.6) .. controls (609.73,130.56) and (627.56,124) .. (642.59,130.96) .. controls (645.87,132.47) and (648.74,134.5) .. (651.15,136.9) -- (630,158.19) -- cycle ; \draw  [color={rgb, 255:red, 245; green, 166; blue, 35 }  ,draw opacity=1 ] (627.48,188.09) .. controls (624.08,187.8) and (620.68,186.93) .. (617.41,185.42) .. controls (602.37,178.46) and (595.82,160.63) .. (602.77,145.6) .. controls (609.73,130.56) and (627.56,124) .. (642.59,130.96) .. controls (645.87,132.47) and (648.74,134.5) .. (651.15,136.9) ;  

\draw    (16.68,214.76) -- (177.68,54.76) ;

\draw    (246.67,189.67) -- (374.67,78.67) ;

\draw    (478.52,202.98) -- (567.87,109.81) ;

\draw  [draw opacity=0] (605.2,103.02) .. controls (605.76,108.24) and (604.96,113.67) .. (602.59,118.78) .. controls (595.64,133.82) and (577.81,140.37) .. (562.77,133.42) .. controls (554.44,129.56) and (548.71,122.37) .. (546.44,114.17) -- (575.36,106.19) -- cycle ; \draw  [color={rgb, 255:red, 245; green, 166; blue, 35 }  ,draw opacity=1 ] (605.2,103.02) .. controls (605.76,108.24) and (604.96,113.67) .. (602.59,118.78) .. controls (595.64,133.82) and (577.81,140.37) .. (562.77,133.42) .. controls (554.44,129.56) and (548.71,122.37) .. (546.44,114.17) ;  

\draw  [color={rgb, 255:red, 245; green, 166; blue, 35 }  ,draw opacity=1 ][fill={rgb, 255:red, 245; green, 166; blue, 35 }  ,fill opacity=1 ] (300.33,84.97) .. controls (300.33,83.04) and (301.9,81.47) .. (303.83,81.47) .. controls (305.77,81.47) and (307.33,83.04) .. (307.33,84.97) .. controls (307.33,86.91) and (305.77,88.47) .. (303.83,88.47) .. controls (301.9,88.47) and (300.33,86.91) .. (300.33,84.97) -- cycle ;
 
\draw  [color={rgb, 255:red, 245; green, 166; blue, 35 }  ,draw opacity=1 ][fill={rgb, 255:red, 245; green, 166; blue, 35 }  ,fill opacity=1 ] (331.69,113.58) .. controls (333.56,114.05) and (334.7,115.95) .. (334.23,117.83) .. controls (333.75,119.7) and (331.85,120.84) .. (329.98,120.37) .. controls (328.1,119.89) and (326.97,117.99) .. (327.44,116.12) .. controls (327.91,114.24) and (329.82,113.11) .. (331.69,113.58) -- cycle ;

\draw  [color={rgb, 255:red, 245; green, 166; blue, 35 }  ,draw opacity=1 ][fill={rgb, 255:red, 245; green, 166; blue, 35 }  ,fill opacity=1 ] (365.33,159.97) .. controls (365.33,158.04) and (366.9,156.47) .. (368.83,156.47) .. controls (370.77,156.47) and (372.33,158.04) .. (372.33,159.97) .. controls (372.33,161.91) and (370.77,163.47) .. (368.83,163.47) .. controls (366.9,163.47) and (365.33,161.91) .. (365.33,159.97) -- cycle ;

\draw (190.82,93.55) node [anchor=north west][inner sep=0.75pt]   [align=left] {$\displaystyle \tilde{L}_{4}$};

\draw (377.53,197.42) node [anchor=north west][inner sep=0.75pt]  [color={rgb, 255:red, 208; green, 2; blue, 27 }  ,opacity=1 ] [align=left] {$\displaystyle L_{\infty }$};

\draw (38,241) node [anchor=north west][inner sep=0.75pt]   [align=left] {$\displaystyle L_{1}$};

\draw (111,243.17) node [anchor=north west][inner sep=0.75pt]   [align=left] {$\displaystyle L_{2}$};

\draw (1,185) node [anchor=north west][inner sep=0.75pt]   [align=left] {$\displaystyle L_{3}$};

\draw (11,120) node [anchor=north west][inner sep=0.75pt]   [align=left] {$\displaystyle L_{4}$};

\draw (570.53,72.42) node [anchor=north west][inner sep=0.75pt]  [color={rgb, 255:red, 208; green, 2; blue, 27 }  ,opacity=1 ] [align=left] {$\displaystyle Y_{L_\infty }$};

\draw (540,26) node [anchor=north west][inner sep=0.75pt]  [color={rgb, 255:red, 245; green, 166; blue, 35 }  ,opacity=1 ] [align=left] {$\displaystyle Y_{P_{12}}$};

\draw (617,193.17) node [anchor=north west][inner sep=0.75pt]  [color={rgb, 255:red, 245; green, 166; blue, 35 }  ,opacity=1 ] [align=left] {$\displaystyle Y_{P_{34}}$};

\draw (3,220) node [anchor=north west][inner sep=0.75pt]   [align=left] {$\displaystyle L_{5}$};

\draw (199.82,166.55) node [anchor=north west][inner sep=0.75pt]   [align=left] {$\displaystyle \tilde{L}_{3}$};

\draw (241.82,209.55) node [anchor=north west][inner sep=0.75pt]   [align=left] {$\displaystyle \tilde{L}_{1}$};

\draw (296.82,218.55) node [anchor=north west][inner sep=0.75pt]   [align=left] {$\displaystyle \tilde{L}_{2}$};

\draw (424.82,92.55) node [anchor=north west][inner sep=0.75pt]   [align=left] {$\displaystyle \overline{L}_{4}$};

\draw (430.82,169.55) node [anchor=north west][inner sep=0.75pt]   [align=left] {$\displaystyle \overline{L}_{3}$};

\draw (477.82,213.55) node [anchor=north west][inner sep=0.75pt]   [align=left] {$\displaystyle \overline{L}_{1}$};

\draw (533.82,226.55) node [anchor=north west][inner sep=0.75pt]   [align=left] {$\displaystyle \overline{L}_{2}$};

\draw (609.53,93.42) node [anchor=north west][inner sep=0.75pt]  [color={rgb, 255:red, 245; green, 166; blue, 35 }  ,opacity=1 ] [align=left] {$\displaystyle Y_{P_{5}}$};

\draw (453.82,196.55) node [anchor=north west][inner sep=0.75pt]   [align=left] {$\displaystyle \overline{L}_{5}$};

\draw (221.82,191.55) node [anchor=north west][inner sep=0.75pt]   [align=left] {$\displaystyle \tilde{L}_{5}$};

\draw (313,72) node [anchor=north west][inner sep=0.75pt]  [color={rgb, 255:red, 245; green, 166; blue, 35 }  ,opacity=1 ] [align=left] {$\displaystyle P_{12}$};

\draw (345,108) node [anchor=north west][inner sep=0.75pt]  [color={rgb, 255:red, 245; green, 166; blue, 35 }  ,opacity=1 ] [align=left] {$\displaystyle P_{5}$};

\draw (370,131) node [anchor=north west][inner sep=0.75pt]  [color={rgb, 255:red, 245; green, 166; blue, 35 }  ,opacity=1 ] [align=left] {$\displaystyle P_{34}$};

\end{tikzpicture}

\caption{An example of a partial wonderful compactification at infinity. From left to right, the arrangements $\SA$ in $\BC^2$, $\tilde{\SA}$ in $\BP_\BC^2$ and $\OSA$ in $X$.}
\label{figrun1}
\end{figure}

The following theorem shows that, for partial wonderful compactifications at infinity, Theorems \ref{char_intro} and \ref{Residue_intro} provide equivalent descriptions. Moreover, it can be viewed as a refinement of Theorem \ref{Residue_intro} in the sense that it suffices to consider the residues on the components of $Y$ indexed by $\SG_0$ and rather than on all components as in Theorem \ref{Residue_intro}.

\begin{theorem}
\ref{lem1}
\label{lem1_intro}
The following diagram commutes:
\[\begin{tikzcd}[row sep=huge,column sep=huge]
	{H^n(\BC^n \setminus \SA)} & &{H^{n-1}(\BC^n \setminus \SA)} \\
	{H^n(X \setminus \OSA, Y \setminus (Y \cup \OSA))} & &{\displaystyle{\bigoplus_{P \in \SG_0}H^{n-1}(Y^\circ_P).}}
	\arrow["\partial", from=1-1, to=1-3]
	\arrow["\simeq", from=1-1, to=2-1]
	\arrow["\simeq", from=1-3, to=2-3]
	\arrow["{\displaystyle{\bigoplus_{P \in \SG_0}\Res_{Y_P^\circ}}}"', from=2-1, to=2-3]
\end{tikzcd}\]
\end{theorem}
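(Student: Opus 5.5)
The plan is to check commutativity on an explicit spanning set of $H^n(\BC^n\setminus\SA)$. By Brieskorn's theorem, $H^\bullet(\BC^n\setminus\SA)\simeq A_\bullet(\SA)$ is realized by logarithmic forms $\omega_I=\bigwedge_{i\in I}d\log f_i$, where $f_i=a_i\cdot x+b_i$ is an affine equation of $L_i$. Both compositions in the diagram are linear, and $A_n(\SA)$ is spanned by the $\omega_I$ with $|I|=n$ and $I$ independent. So it suffices to show, for each such $I$ and each $P\in\SG_0$, that
\[
\Res_{Y_P^\circ}(\omega_I)=\rho_P(\partial\omega_I),
\]
up to a global sign convention. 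Here $\rho_P$ is the $P$-component of the right vertical map. I take $\rho_P$ to be the restriction map described below; checking that it is indeed the right-hand map of the statement is part of step one. I also use that the left vertical arrow is the identification of Lemma~\ref{lemmainclusion} composed with the inclusion into $H^n(\BC^n\setminus\SA)$. By Theorem~\ref{theoremres} this is compatible with residues, so the residue may be computed on $\omega_I$ as a form on $\BC^n\setminus\SA$.

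\emph{Step 1: local coordinates near $Y_P$.} Pick a direction vector $p$ representing $P\in\SG_0\subseteq L_\infty$ and change linear coordinates so that $p=e_1$. In $\BP^n_\BC$ near $P$, use coordinates $s=1/x_1$ and $y_j=x_j/x_1$. Blowing up $P$, one chart has $s=r$, $y_j=rz_j$, so that $x_1=1/r$ and $x_j=z_j$, and $Y_P=\{r=0\}$. The later blow-ups in $\SG$ happen along strata of higher dimension or disjoint from a generic point of $Y_P$. Since $X$ is locally a product for $\SA$ (Definition~\ref{locallyaproduct}), the residue along $Y_P^\circ$ can be computed in this chart. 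There $Y_P^\circ$ is the complement, in $\BC^{n-1}\simeq\BC^n/\BC p$, of the arrangement $\SA^P$ formed by the images of the hyperplanes of $\SA$ parallel to $p$. The natural restriction map $\rho_P$ therefore sends a generator $e_i$ to $e_i$ if $L_i\parallel p$ (i.e.\ $a_{i1}=0$) and to $0$ otherwise, extended multiplicatively and followed by the Orlik--Solomon relations of $\SA^P$.

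\emph{Step 2: residue computation.} In the chart, $f_i=a_{i1}/r+a_i'\cdot z+b_i$. If $a_{i1}\neq0$, then $d\log f_i=-dr/r+\eta_i$ with $\eta_i$ regular and vanishing on $r=0$. If $a_{i1}=0$, then $d\log f_i=d\log(a_i'z+b_i)$, which is regular along $Y_P$ and restricts to the generator of $\SA^P$. Expand $\omega_I=\bigwedge_{k}(\epsilon_k(-dr/r)+\eta_{i_k})$, where $\epsilon_k\in\{0,1\}$ records whether $a_{i_k1}\ne 0$. Only the terms that are linear in $dr/r$ survive the residue, and each contributes $\pm\bigwedge_{j\neq k}d\log f_{i_j}\big|_{Y_P}$ for an index $k$ with $\epsilon_k=1$. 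In such a term, any remaining factor with $\epsilon_j=1$ restricts to $0$ on $Y_P$, being $-dr/r$ plus a form vanishing there. Hence
\[
\Res_{Y_P}\omega_I=\sum_{k}(-1)^{k-1}\rho_P\bigl(\omega_{I\setminus i_k}\bigr)=\rho_P(\partial\omega_I).
\]
This holds because $\rho_P$ kills exactly the terms with $\epsilon_k=0$, where $I\setminus i_k$ still contains a non-parallel index. The sign $(-1)^{k-1}$ comes from moving $dr/r$ to the front, with an overall $-1$ absorbed into the orientation convention of $\Res$.

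\emph{Step 3: well-definedness.} Since $\rho_P$ is defined on generators, I must check that it respects the Orlik--Solomon relations, so that it is well defined on $A_{n-1}(\SA)$. A relation $\partial e_S$ for dependent $S$ either involves only parallel hyperplanes, in which case it maps to the corresponding relation of $\SA^P$, or has a non-parallel element. The second case needs care: each monomial in $\partial e_S$ omits one element of $S$ and may retain no non-parallel element, so termwise vanishing fails. I would handle it by matching such a relation with the corresponding relation in $\SA^P$ induced on the parallel part, or by verifying directly that the residues agree via the form computation.

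\emph{Main obstacle.} I expect the main obstacle to be justifying the chart computation globally. One must show that the further blow-ups in $\SG\setminus\SG_0$ do not change $Y_P^\circ$ or the residue, and that the vertical identifications are exactly $\rho_P$. I would try first to use the locally-a-product property to reduce to a neighbourhood of a generic point of $Y_P^\circ$, and then identify the combinatorics of $\SA^P$ with the local arrangement at $P$.
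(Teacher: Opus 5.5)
Your argument is correct in substance and has the same skeleton as the paper's proof: check the identity on the monomials $e_I$ with $I$ independent, and compute $\Res_{Y_P^\circ}$ in one affine chart of the blow-up at $P$, where $Y_P^\circ\simeq\BC^{n-1}\setminus\SA^P$.

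\textbf{Comparison with the paper.} Your local computation differs from the paper's. The paper proceeds in three steps.
\begin{itemize}
\item It builds the right vertical map from the Brieskorn decomposition $A_{n-1}(\SA)=\bigoplus_l A_{n-1}(\SA_l)$, regrouped by directions.
\item It rewrites $e_I$ in projective Orlik--Solomon generators as $\tilde e_I+\sum_j\pm\tilde e_{I\setminus i_j}\wedge\tilde e_\infty$.
\item It shows by a pole-order argument that projective monomials whose hyperplanes do not all pass through $P$ have no residue on $E_P$, then computes the remaining terms after a linear change of coordinates.
\end{itemize}
Your splitting $d\log f_i=-dr/r+\eta_i$ stays in affine coordinates and treats all terms at once. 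The relations $dr\wedge dr=0$ and $\eta_i|_{r=0}=0$ kill everything except monomials with exactly one non-parallel hyperplane, which is exactly what survives in $\rho_P(\partial\omega_I)$. Note that $\eta_i$ does not vanish as a form on $r=0$, since it has a $dr$-component; only its pullback vanishes, and that is all you use.

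This route is more direct and exhibits the right vertical map as the ring map $\rho_P$. The price is that you must check $\rho_P$ is well defined and equals the map in the statement; the paper gets both for free from Brieskorn. The overall sign ($-1$, or $(-1)^n$ with the convention $\Res(\beta\wedge du/u)=\beta$) is global and harmless. What your computation actually proves, and what is needed, is the identity on all of $H^n(\BC^n\setminus\SA)=H^n(X\setminus(Y\cup\OSA))$; Theorem~\ref{theoremres} plays no role.

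\textbf{The gap in Step~3, and how to close it.} Step~3 is left open, but it closes quickly. Suppose $S$ is dependent with $X=\bigcap S\neq\emptyset$, and exactly one $s_0\in S$ is non-parallel. Then $\rho_P(\partial e_S)=\pm e_{S\setminus s_0}$, and $S\setminus s_0$ is itself dependent. Otherwise $Y=\bigcap(S\setminus s_0)$ would have codimension $|S|-1\geq\codim X$, so $X=Y$ would be stable under translation by $p$. That would force $L_{s_0}\supseteq X$ to be parallel to $p$, a contradiction. Codimensions of $p$-stable flats are unchanged in $\BC^n/\BC p$, so the image of $S\setminus s_0$ is dependent in $\SA^P$, and $e_{S\setminus s_0}=0$ in $A(\SA^P)$.

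You can also bypass Step~3 entirely. For independent $I$, each $I\setminus i_k$ spans a line $l_k$, which has direction $p$ exactly when all of $I\setminus i_k$ are parallel to $p$. The statement's right vertical map (part (1) of Theorem~\ref{lem1}) sends the Brieskorn summand of $l_k$ by $e_T\mapsto f_T$ to the component of the direction of $l_k$. So it agrees with $\rho_P$ on every monomial occurring in $\partial\omega_I$, which also settles the identification you postponed in Step~1.

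\textbf{The chart is not an obstacle.} Your chart covers $\{x_1\neq 0\}\cup(Y_P\setminus Y_\infty)$. Every later center lies in the strict transform of $L_\infty$, which the chart misses; this covers the other points of $\SG_0$ and the strict transforms of elements of $\SG_{>0}^{\operatorname{irr}}$. Moreover $Y_P^\circ\subseteq Y_P\setminus Y_\infty$. So the chart is an open subset of $X$ containing all of $Y_P^\circ$; this is the paper's reduction to a single blow-up, and the locally-a-product property is not the relevant input.

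\textbf{Remaining omission.} You never show that $\bigoplus_P\rho_P$ is an isomorphism. This follows from Brieskorn on both sides: points of $\SA^P$ correspond bijectively to lines of $\SA$ of direction $p$, with the same local arrangements.
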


The upper row of this diagram corresponds the map of Theorem \ref{char_intro}, while the lower row is a refinement of the map appearing in Theorem \ref{Residue_intro}. The theorem then shows that the two descriptions of $H^n_{< \infty}(\BC^n \setminus \SA)$ in terms of a kernel of a linear map with source $H^n(\BC^n \setminus \SA)$ given in Theorems \ref{char_intro} and \ref{Residue_intro} are equivalent.

To illustrate the construction of a partial wonderful compactification at infinity, we consider the running Example \ref{runningexample}.
Further details will be provided in Example \ref{example1}.

\begin{example}
Figure \ref{figrun1} is an illustration of the situation. First, we compactify $\BC^2$ to $\BP^2_\BC$ by adding a hyperplane $L_\infty$ at infinity. We obtain an arrangement $\tilde{\SA}=\{\tilde{L}_1, \tilde{L}_2, \tilde{L}_3, \tilde{L}_4, \tilde{L}_5\} \subseteq \BP^n_\BC.$ The hyperplanes of $\tilde{\SA}$ intersect $L_\infty$ in three points: $$P_{12}=\tilde{L}_1 \cap \tilde{L}_2 \cap L_\infty, \ P_{34}=\tilde{L}_3 \cap \tilde{L}_4 \cap L_\infty, \text{ and } P_{5}=\tilde{L}_5 \cap L_\infty.$$
To construct the wonderful compactification at infinity $X,$ we blow-up these three points and denote by $Y_{P_{12}},Y_{P_{34}}$ and $Y_{P_5}$ the exceptional divisors of the blow-ups at the points $P_{12},$ $P_{34}$ and $P_{5},$ respectively. We observe that $$H^1(\BC^2 \setminus \SA) \simeq H^1(Y^\circ_{P_{12}}) \oplus H^1(Y_{P_{34}}^\circ) \oplus H^1(Y_{P_{5}}^\circ),$$ 
which can be viewed as a variant of Brieskorn decomposition. In Example \ref{example1}, we present computations that verify that for $\SG_0:=\{P_{12}, P_{34}, P_5\},$ we have
$$\ker(\partial) \simeq \ker\left(\bigoplus_{P \in \SG_0}\Res_{Y_P^\circ}\right).$$
Note that, to obtain a wonderful compactification in the usual sense, one would need to further perform blow-ups at the points $P_{135}=\overline{L}_1 \cap \overline{L}_3 \cap \overline{L}_5$ and $P_{245}=\overline{L}_2 \cap \overline{L}_4 \cap \overline{L}_5$.
\end{example}

Lastly, we turn to the notion of canonical forms, which Arkani-Hamed, Bai, and Lam introduced in \cite{PosGeom}, motivated by particle physics and cosmology. In \cite{brown2025positive}, the authors use mixed Hodge theory to define a canonical map for pairs of varieties $(X,Y)$ where $X$ is compact and of dimension $n$, which assigns to each class in the homology group $H_n(X,Y)$, a class of differential forms in the cohomology group $H^n(X \setminus Y)$. In the case of a projective hyperplane arrangement $\SP \subseteq \BP_\BC^n$, this canonical map is given by Poincaré duality. In Section \ref{Section4}, we extend this definition to the case of hyperplane arrangements in the non-compact ambient variety $\BC^n$ using Poincaré duality \eqref{Poincare1}, and we explain how our construction is related to the compact case.

This perspective was our initial motivation for studying the cohomology at finite distance. Indeed, when $\SA$ is an essential affine hyperplane arrangement, the relevant canonical form naturally takes value in the cohomology group $H^n_{< \infty}(\BC^n \setminus \SA).$ An interesting class of examples is the integrand of the cosmological correlators as defined in \cite{cosmological}.

\subsection{Structure of the paper}
This paper is organized as follows.
In Section \ref{section2.0}, we review the background material on Orlik--Solomon algebras.

In Section \ref{section2.1}, we prove Theorem \ref{char_intro}. In Section \ref{section2.3}, we show that for the complexification of real arrangements the bounded regions form a basis of the relative homology.

After expressing the cohomology at finite distance in terms of a compactification $X$ having suitable properties at infinity in Section \ref{section3.1}, we prove Theorem \ref{Residue_intro} in Section \ref{section3.11}. Section \ref{section2.2} constructs the partial wonderful compactification at infinity.
In Section \ref{section2.22}, we prove Theorem \ref{lem1_intro}.
Section \ref{Section3.4} is devoted to examples.

In Section \ref{sec4.1}, we introduce a definition for the canonical map. Lastly, in Section \ref{sec4.2}, we derive a formula for the canonical form associated with a bounded region (see Proposition \ref{prop6.7}) for complexifications of real arrangements.

\subsection{Conventions}
We denote by $A_\bullet(\SA)$ the Orlik--Solomon algebra of an affine hyperplane arrangement $\SA$ with rational coefficients. Similarly, all cohomology and homology groups will be taken with rational coefficients.

\subsection{Acknowledgements}
The author thanks Clément Dupont for suggesting the topic of this article and for many helpful ideas and discussions, and Olof Bergvall, Emanuele Delucchi, Claudia Fevola, Cary Malkiewich, Marta Panizzut, Bernd Sturmfels, and Claudia Yun for insightful discussions.

\section{Preliminaries on Orlik--Solomon Algebras}
\label{section2.0}

In this section, we recall the standard definitions and results about Orlik--Solomon algebras that will be used throughout the paper. These concepts were initially introduced and developed by Orlik and Solomon in \cite{Orlik-Solomon} and by Looijenga in \cite{Looijenga}. For additional explanations, we also refer to the textbooks \cite{Hyperplane} and \cite{Hypstanley}.

\subsection{The case of a central arrangement}

We start with central hyperplane arrangements $\SA \subseteq \BC^n$, i.e., arrangements in which all hyperplanes share a common intersection.

\begin{definition}
\label{Orlik--solomondef}
Let $\SA=\{L_1, \ldots, L_m\} \subseteq \BC^n$ be a central hyperplane arrangement. For each hyperplane $L_i$, where $1 \leq i \leq m$, we associate a generator $e_i$.
The Orlik--Solomon algebra with rational coefficients is defined as:
$$A_\bullet(\SA)=\bigwedge \nolimits^{\bullet} (e_{1}, \ldots, e_{m})/I_\bullet(\SA),$$
where $I_\bullet(\SA)$ is the ideal of the exterior algebra generated by the relations:
$$ \sum_{j=1}^{k+1} (-1)^je_{i_1} \wedge \cdots \wedge \widehat{e}_{i_j} \wedge \cdots \wedge e_{i_{k+1}}$$ 
for $L_{i_1},\ldots, L_{i_{k+1}}$ linearly dependent hyperplanes.
This is a graded algebra inheriting its grading from that of the exterior algebra.
\end{definition}

The Orlik--Solomon algebra provides a combinatorial description of the cohomology of the complement of the hyperplane arrangement $\SA$.

\begin{prop}\cite[Theorem 5.2]{Orlik-Solomon}
\label{Orlik--Solomon2}
The cohomology of the complement is isomorphic to the Orlik--Solomon algebra: $$A_\bullet(\SA) \simeq H^\bullet(\BC^n \setminus \SA).$$
Moreover, for $L_i=\{f_i=0\} \in \SA$, this isomorphism sends $e_i$ to the class of the differential form $d\log(f_i).$
\end{prop}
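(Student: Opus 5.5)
The plan is to construct the map $A_\bullet(\SA) \to H^\bullet(\BC^n \setminus \SA)$ directly and then prove it is bijective by induction on the number of hyperplanes, using deletion--restriction. Write $M(\SA)=\BC^n\setminus\SA$, and work inside the algebra of closed holomorphic forms on $M(\SA)$.

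\emph{Step 1: well-definedness.} Send $e_i \mapsto \omega_i := d\log(f_i)=df_i/f_i$, which is closed, and extend multiplicatively from the exterior algebra. To see that the map factors through $I_\bullet(\SA)$, take linearly dependent $L_{i_1},\dots,L_{i_{k+1}}$ and set $\omega:=\omega_{i_1}\wedge\cdots\wedge\omega_{i_{k+1}}$ and $\omega_{\widehat{j}}$ for the same product with $\omega_{i_j}$ omitted. Since the linear forms $f_{i_1},\dots,f_{i_{k+1}}$ are dependent, the differentials $df_{i_1},\dots,df_{i_{k+1}}$ are dependent, so $\omega=0$. Let $\iota_E$ denote contraction with the Euler vector field $E=\sum x_j\partial_{x_j}$; because $\SA$ is central and each $f_i$ is linear (after translating the centre to the origin), $\iota_E\omega_i=1$. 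Contracting $\omega=0$ with $E$ then gives $\sum_j(-1)^{j-1}\omega_{\widehat{j}}=0$ as forms, so in particular in cohomology. This is exactly the Orlik--Solomon relation.

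\emph{Step 2: bijectivity by deletion--restriction.} Fix $H=L_m$ and let $\SA'=\SA\setminus\{H\}$ be the deletion and $\SA''=\{H\cap L : L\in\SA'\}$ the restriction, an arrangement in $H\simeq\BC^{n-1}$. Both are central. On the topological side, $M(\SA'')=H\cap M(\SA')$ is a smooth closed hypersurface in $M(\SA')$ with complement $M(\SA)$. The Gysin sequence therefore reads
$$\cdots\to H^k(M(\SA'))\to H^k(M(\SA))\xrightarrow{\Res}H^{k-1}(M(\SA''))\to H^{k+1}(M(\SA'))\to\cdots.$$
On the algebraic side there is a sequence
$$0\to A_k(\SA')\to A_k(\SA)\to A_{k-1}(\SA'')\to 0,$$
in which the second map sends $e_m\wedge e_S\mapsto e_{\bar S}$ and kills monomials without $e_m$. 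The realization map of Step 1 intertwines these two sequences, because $\Res(\omega_m\wedge\eta)=\eta|_H$.

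\emph{Step 3: concluding the induction.} By induction, the outer realization maps (for $\SA'$ and $\SA''$) are isomorphisms. The residue is then surjective, since every class of $M(\SA'')$ lifts as $\omega_m\wedge(\text{lift})$. Hence the Gysin sequence breaks into short exact sequences, and the five lemma gives the result for $\SA$. The base cases are the empty arrangement and a single hyperplane, $M\simeq\BC^*\times\BC^{n-1}$.

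The main obstacle is the exactness of the algebraic deletion--restriction sequence: well-definedness of the map to $A(\SA'')$, injectivity of $A(\SA')\to A(\SA)$, and exactness in the middle. I would handle this by first establishing the broken-circuit basis. Order the hyperplanes with $H$ last; then show that the nbc monomials span $A_\bullet(\SA)$ by rewriting with the relations, and that they are independent. Under this ordering, the nbc sets of $\SA$ split into the nbc sets of $\SA'$ together with $\{H\}\cup$ (nbc sets of $\SA''$), and this splitting yields exactness directly.
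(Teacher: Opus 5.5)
The paper gives no proof of this statement; it only quotes it from Orlik--Solomon. Your outline is the standard self-contained argument: relations via contraction with the Euler field, the Thom--Gysin sequence for $M(\SA'')\subset M(\SA')$, surjectivity of the residue, and the five lemma. This is essentially the proof in Orlik--Terao's book, and its structure is sound. Two small remarks. First, the Gysin boundary agrees with the residue of forms only up to a factor $2\pi i$, so you should either work over $\BC$ or send $e_i$ to $\frac{1}{2\pi i}d\log(f_i)$; the latter is also what the statement needs to be literally true with $\BQ$-coefficients. Second, the induction must run over all ambient dimensions at once, since $\SA''$ lives in $H\simeq\BC^{n-1}$.

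The one step you assert without argument is the linear independence of the nbc monomials in $A_\bullet(\SA)$. That is the real content of the broken-circuit basis theorem. The obvious inductive proof (apply $j$, then use injectivity of $A_\bullet(\SA')\to A_\bullet(\SA)$) is circular, because that injectivity is normally deduced from the nbc basis. You can cite the theorem, or you can let your own induction produce it, as follows.

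Surjectivity of $\phi_\SA$ needs no algebraic exactness. If $\Res(x)=\phi_{\SA''}(\sum c\,e_{\bar S})$, then $x-\phi_\SA(\sum c\,e_m\wedge e_S)$ has zero residue. Hence it comes from $H^k(M(\SA'))=\phi_{\SA'}(A_k(\SA'))$.

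For injectivity, carry $\dim A_k=|\mathrm{nbc}_k|$ along the induction. You then need only the easy spanning statement and your splitting of nbc sets: $\dim A_k(\SA)\le|\mathrm{nbc}_k(\SA)|=|\mathrm{nbc}_k(\SA')|+|\mathrm{nbc}_{k-1}(\SA'')|=\dim H^k(M(\SA'))+\dim H^{k-1}(M(\SA''))=\dim H^k(M(\SA))$. Independence of the nbc monomials then follows as a corollary. Injectivity of $A_\bullet(\SA')\to A_\bullet(\SA)$ and well-definedness of $j$ also come for free from the topological row, once $\phi_{\SA'}$ and $\phi_{\SA''}$ are isomorphisms.

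Alternatively, middle exactness of the algebraic sequence has a short direct proof. Suppose $x+e_m\wedge y\mapsto 0$, with $x,y$ in the exterior algebra on $\SA'$. Then $y$ lies in the ideal generated by the $e_L-e_{L'}$ with $L\cap H=L'\cap H$ and by the $\partial e_T$ with $\{L\cap H\}_{L\in T}$ dependent. Since $\{H,L,L'\}$ and $\{H\}\cup T$ are dependent, modulo $I_\bullet(\SA)$ one has $e_m\wedge(e_L-e_{L'})\equiv -e_L\wedge e_{L'}$ and $e_m\wedge\partial e_T\equiv e_T$. So $e_m\wedge y$ already lies in the image of $A_\bullet(\SA')$.
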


\begin{lemma}
\label{lemmacomplex}
The homogeneous components $A_k(\SA)$ form a complex
$$0 \to A_n(\SA) \overset{\partial}{\to} A_{n-1}(\SA) \overset{\partial}{\to} \cdots \overset{\partial}{\to} A_1(\SA) \overset{\partial}{\to} A_0(\SA) \to 0$$
where the boundary operator $\partial$ is defined as follows:
\begin{align*}
\partial: A_k(\SA) &\to A_{k-1}(\SA) \\
e_{i_1} \wedge \cdots \wedge e_{i_k} & \mapsto \sum_{j=1}^{k} (-1)^{j+1}e_{i_1} \wedge \cdots \wedge \widehat{e}_{i_j} \wedge \cdots \wedge e_{i_k}.
\end{align*}
\end{lemma}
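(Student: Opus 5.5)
We need two things: that $\partial$ is well defined on $A_\bullet(\SA)$, i.e. it preserves the ideal $I_\bullet(\SA)$, and that $\partial\circ\partial=0$. We will check both on the exterior algebra $E_\bullet=\bigwedge^\bullet(e_1,\ldots,e_m)$ first, and then pass to the quotient.

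\textbf{Step 1: $\partial\circ\partial=0$ on $E_\bullet$.} Define $\partial$ on $E_\bullet$ by the displayed formula on monomials, extended linearly. It is the interior product (contraction) $\iota_\varepsilon$ with the linear form $\varepsilon$ sending every $e_i$ to $1$. Hence $\partial$ is a graded derivation of degree $-1$:
$$\partial(a\wedge b)=\partial a\wedge b+(-1)^{|a|}a\wedge\partial b.$$
The composite $\partial\circ\partial$ vanishes on $E_\bullet$ by the usual sign cancellation: the terms removing $e_{i_j}$ and $e_{i_l}$ appear twice, with opposite signs. We will verify the derivation property directly on monomials; it is also immediate from the formula. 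Note also that $\partial$ is well defined on $E_\bullet$, since the formula is alternating in the indices.

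\textbf{Step 2: $\partial(I_\bullet)\subseteq I_\bullet$.} The generators of $I_\bullet(\SA)$ are exactly $r=\partial(e_{i_1}\wedge\cdots\wedge e_{i_{k+1}})$ for dependent $L_{i_1},\ldots,L_{i_{k+1}}$, up to a global sign, because the definition uses $(-1)^j$ and $\partial$ uses $(-1)^{j+1}$. By Step 1, $\partial r=\partial\partial(\cdots)=0\in I_\bullet$. A general element of the ideal has the form $a\wedge r\wedge b$. The derivation rule gives
$$\partial(a\wedge r\wedge b)=\partial a\wedge r\wedge b\pm a\wedge\partial r\wedge b\pm a\wedge r\wedge\partial b.$$
Each term either contains the factor $r$ or contains $\partial r=0$, so each lies in $I_\bullet$. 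Therefore $\partial$ descends to $A_\bullet(\SA)$. The induced map still satisfies $\partial\circ\partial=0$, and it lowers the degree by one.

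\textbf{Step 3: the range of the complex.} We must also check that the complex starts at $A_n$, i.e. $A_k(\SA)=0$ for $k>n$. Any $n+1$ hyperplanes through the common point have linearly dependent defining linear forms, since they live in an $n$-dimensional dual space. So the relation attached to them lies in $I$. Multiply this relation by $e_{i_1}$: every term except the one omitting $e_{i_1}$ contains $e_{i_1}$ twice and so vanishes. The result is $\pm e_{i_1}\wedge\cdots\wedge e_{i_{k+1}}\in I$, hence every degree-$(n+1)$ monomial is zero in $A_\bullet(\SA)$. Alternatively, this follows from Proposition \ref{Orlik--Solomon2}, since $\BC^n\setminus\SA$ is a Stein manifold of complex dimension $n$, so its cohomology vanishes in degrees above $n$.

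The only step needing genuine care is the sign bookkeeping in the derivation property, which is routine.
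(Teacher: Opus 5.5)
Your proof is correct and is essentially the paper's argument. The paper only says that a direct computation shows $\partial$ is well defined and $\partial\circ\partial=0$; your identification of $\partial$ with the contraction $\iota_\varepsilon$ is that computation made explicit, since it makes each Orlik--Solomon relation a boundary up to sign, so the ideal is stable under the derivation $\partial$. Your Step 3, checking $A_k(\SA)=0$ for $k>n$ via $e_{i_1}\wedge\partial(e_{i_1}\wedge\cdots\wedge e_{i_{n+1}})=e_{i_1}\wedge\cdots\wedge e_{i_{n+1}}$, is a correct extra detail that the paper leaves implicit.
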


\begin{proof}
A direct computation shows that $\partial$ is well-defined and $\partial \circ \partial=0.$
\end{proof}

\begin{remark}
This map $\partial$ is derivation, i.e., for $a \in A_j(\SA)$ and $b \in A_k(\SA),$ we have $\partial(a \wedge b)=\partial(a) \wedge b+(-1)^j a \wedge \partial(b).$ Moreover, it is the unique derivation satisfying $\partial(e_i)=1$. 
\end{remark}

\begin{lemma}
\label{lemmasequence}
If $\SA$ is non-empty, the complex
$$0 \to A_n(\SA) \xrightarrow{\partial} A_{n-1}(\SA) \xrightarrow{\partial} \cdots \xrightarrow{\partial} A_1(\SA) \xrightarrow{\partial} A_0(\SA) \xrightarrow{\partial} 0$$
is an exact sequence.
\end{lemma}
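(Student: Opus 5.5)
The plan is to construct an explicit contracting homotopy for $\partial$ by multiplying with a generator. Since $\SA$ is non-empty, fix a hyperplane $L_1 \in \SA$ with generator $e_1 \in A_1(\SA)$. Define
$$h: A_{k}(\SA) \to A_{k+1}(\SA), \qquad h(a)=e_1 \wedge a .$$
This map is well-defined because $I_\bullet(\SA)$ is a two-sided ideal of the exterior algebra. Left multiplication by $e_1$ therefore descends to the quotient $A_\bullet(\SA)$.

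The key identity is $\partial h + h\partial = \mathrm{id}$ on each $A_k(\SA)$. It follows from the Remark above that $\partial$ is a derivation with $\partial(e_1)=1$:
$$\partial(e_1\wedge a)=\partial(e_1)\wedge a - e_1\wedge \partial(a) = a - h(\partial a).$$
Hence $\partial\circ h + h\circ\partial = \mathrm{id}_{A_k(\SA)}$ for all $k$, including $k=0$, where $\partial$ on $A_0(\SA)=\BQ$ is the zero map.

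Exactness then follows formally. If $a\in A_k(\SA)$ satisfies $\partial a=0$, then $a=\partial(e_1\wedge a)$, so $a$ lies in the image of $\partial$. At the right end, $A_0(\SA)=\BQ$ is hit by $\partial(e_1)=1$, giving surjectivity onto $A_0(\SA)$. At the left end, $A_{n+1}(\SA)=0$, because in a central arrangement in $\BC^n$ any $n+1$ hyperplanes are linearly dependent; so the relations kill all monomials of degree $n+1$. Consequently the complex starting at $A_n(\SA)$ is the whole complex, and injectivity of $\partial$ on $A_n(\SA)$ holds: if $\partial a=0$, then $a=\partial(e_1\wedge a)$ with $e_1\wedge a\in A_{n+1}(\SA)=0$, so $a=0$.

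The only real point to check is the well-definedness of $\partial$ and its derivation property on the quotient. For this the plan invokes Lemma~\ref{lemmacomplex} and the subsequent Remark, as the paper allows. If one wanted to verify this directly, the main step would be showing that $\partial$ maps $I_\bullet(\SA)$ into itself. That reduces to the generators: for a generator $r$ coming from a dependent set $S$, $\partial r$ is $\pm\partial\partial(e_S)=0$. Applying the derivation rule to $r\wedge b$ then keeps everything in the ideal.
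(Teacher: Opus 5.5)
Your proof is correct and follows the paper's argument: the paper's entire proof is the contracting homotopy $h(x)=e_1\wedge x$, and you supply the details, namely $\partial h+h\partial=\mathrm{id}$ from the derivation property with $\partial(e_1)=1$, and exactness at $A_n(\SA)$ via $A_{n+1}(\SA)=0$. The one step you state quickly is that vanishing: the relation attached to a dependent set $S=\{i_1,\dots,i_{n+1}\}$ is $\partial e_S$ rather than $e_S$ itself, but $e_S=e_{i_1}\wedge\partial e_S\in I_\bullet(\SA)$, so every degree-$(n+1)$ monomial does lie in the ideal.
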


\begin{proof}
A contracting homotopy is given by $h(x)=e_1 \wedge x.$
\end{proof}

\subsection{The case of an affine arrangement}
We now consider affine hyperplane arrangements $\SA \subseteq \BC^n$ which admit analogous definitions and results.

\begin{definition}
Let $\SA=\{L_1, \ldots, L_m\} \subseteq \BC^n$ be an affine hyperplane arrangement. For each hyperplane $L_i$, where $1 \leq i \leq m$, we associate a generator $e_i$.
The Orlik--Solomon algebra is defined as follows:
$$A_\bullet(\SA)=\bigwedge \nolimits^{\bullet} (e_{1}, \ldots, e_{m})/I_\bullet(\SA),$$
where the ideal of the exterior algebra $I_\bullet(\SA)$ is given by
$$\begin{pmatrix}
    \sum_{j=1}^{k+1} (-1)^{j+1}e_{i_1} \wedge \cdots \wedge \widehat{e}_{i_j} \wedge \cdots \wedge e_{i_{k+1}} \ | \ L_{i_1},\ldots, L_{i_{k+1}} \text{ are linearly dependent} \\
    e_{i_1} \wedge \cdots \wedge e_{i_k} \ | \ L_{i_1} \cap \cdots \cap L_{i_{k}}= \emptyset
\end{pmatrix}$$
This is a graded algebra inheriting its grading from that of the exterior algebra.
\end{definition}

We also have a version of Proposition \ref{Orlik--Solomon2} for affine hyperplane arrangements.

\begin{prop}\cite[Theorem 5.2]{Orlik-Solomon}
\label{Orlik--Solomon}
The cohomology of the complement is isomorphic to the Orlik--Solomon algebra: $$A_\bullet(\SA) \simeq H^\bullet(\BC^n \setminus \SA).$$
Moreover, for $L_i=\{f_i=0\} \in \SA,$ this isomorphism sends $e_i$ to the class of the differential form $d\log(f_i).$
\end{prop}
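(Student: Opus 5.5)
We reduce to the central case, Proposition \ref{Orlik--Solomon2}, by coning. Write $L_i=\{f_i=0\}$ with $f_i$ of degree one. Let $F_i(x_0,\ldots,x_n)=x_0 f_i(x_1/x_0,\ldots,x_n/x_0)$ be the homogenization of $f_i$. The cone $c\SA\subseteq\BC^{n+1}$ is the central arrangement consisting of $H_0=\{x_0=0\}$ and $H_i=\{F_i=0\}$ for $1\le i\le m$. Its Orlik--Solomon algebra $A_\bullet(c\SA)$ has generators $e_0,e_1,\ldots,e_m$.

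\textbf{Step 1 (topology).} Consider the map
$$\BC^*\times(\BC^n\setminus\SA)\to \BC^{n+1}\setminus c\SA,\qquad (\lambda,x)\mapsto (\lambda,\lambda x).$$
It is a biholomorphism, because $x_0\neq 0$ on the complement of $c\SA$. By Künneth,
$$H^\bullet(\BC^{n+1}\setminus c\SA)\simeq H^\bullet(\BC^n\setminus\SA)\oplus \eta\, H^\bullet(\BC^n\setminus\SA),$$
where $\eta=[d\log x_0]$. Now restrict along the slice $\iota:x\mapsto(1,x)$. Then $\iota^*$ is surjective, and its kernel is the ideal generated by $\eta$. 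On forms we have $\iota^*d\log x_0=0$ and $\iota^*d\log F_i=d\log f_i$.

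Combining this with Proposition \ref{Orlik--Solomon2} for $c\SA$, which sends $e_0\mapsto\eta$ and $e_i\mapsto[d\log F_i]$, we obtain an isomorphism
$$A_\bullet(c\SA)/(e_0)\xrightarrow{\ \simeq\ } H^\bullet(\BC^n\setminus\SA),\qquad e_i\mapsto[d\log f_i].$$

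\textbf{Step 2 (combinatorics).} It remains to show that $A_\bullet(c\SA)/(e_0)\simeq A_\bullet(\SA)$ via $e_i\mapsto e_i$. Both are quotients of the exterior algebra on $e_1,\ldots,e_m$, so we compare the ideals.

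Consider first a defining relation $\partial e_S$ of $c\SA$ with $0\notin S$, so that $\{H_i\}_{i\in S}$ is linearly dependent. There are two cases.
\begin{itemize}
\item If $\bigcap_{i\in S}L_i\neq\emptyset$, the affine forms $f_i$, $i\in S$, are dependent after translating a common point to the origin. Hence $\partial e_S$ is an affine relation of the first kind.
\item If $\bigcap_{i\in S}L_i=\emptyset$, then every monomial of $\partial e_S$ also indexes a subfamily with empty intersection. Indeed, dependence gives one $F_j$ as a combination of the others, so removing $H_j$ does not change the intersection $\bigcap H_i$. Hence all terms already lie in $I_\bullet(\SA)$.
\end{itemize}

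Consider next a relation $\partial(e_0\wedge e_T)=e_T-e_0\wedge\partial e_T$. Modulo $e_0$ it reduces to $e_T$. The set $\{H_0\}\cup\{H_i\}_{i\in T}$ is dependent exactly when $\bigcap_{i\in T}H_i\subseteq H_0$. This happens exactly when either $\bigcap_{i\in T}L_i=\emptyset$, or $\{H_i\}_{i\in T}$ is itself dependent. In the second case $e_T$ already lies in the ideal via $\partial e_T$, since $e_T=e_{i}\wedge\partial e_T$ up to sign for any $i\in T$. So these relations give exactly the monomials $e_T$ with empty affine intersection.

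Conversely, every generator of $I_\bullet(\SA)$ arises in one of these two ways. An affine dependent family with nonempty intersection has homogenizations that are dependent. An empty intersection is handled by the second case. Thus the two ideals coincide.

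\textbf{Main obstacle.} The delicate point is this matching of the ideals in Step 2, in particular the case of a dependent family $S$ with empty affine intersection. There one must check that dependence really allows deleting a hyperplane without changing the intersection; I would verify this carefully using that the linear span of $\{F_i\}_{i\in S}$ is spanned by $S\setminus\{j\}$ for an appropriate $j$. Step 1 is standard once the trivialization of the $\BC^*$-action is written down.
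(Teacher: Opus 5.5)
The paper gives no proof of this proposition; it simply cites Orlik--Solomon. Your reduction to the central case (Proposition \ref{Orlik--Solomon2}) by coning is the standard way to obtain the affine statement, and Step 1 is correct as written. The identity $F_i(\lambda,\lambda x)=\lambda f_i(x)$ makes the map a biholomorphism. The projection is a left inverse of $\iota$, so $\iota^*$ is onto with kernel $(\eta)$. Since the central isomorphism is multiplicative, $(e_0)$ corresponds to $(\eta)$. Your route is also consistent with the paper's projective subsection, where $A_\bullet(\SA)$ is realized as the subalgebra $\ker\partial$ of the cone's algebra (via $e_i\mapsto\tilde e_i-\tilde e_{m+1}$) rather than as your quotient. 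The two agree: using $e_0$ as contracting homotopy (Lemma \ref{lemmasequence}) one gets $A_\bullet(c\SA)=\ker\partial\oplus e_0A_\bullet(c\SA)$. Hence $\ker\partial\to A_\bullet(c\SA)/(e_0)$ is an algebra isomorphism sending $\tilde e_i-\tilde e_0\mapsto e_i$. Your quotient version has one advantage: the monomial relations for empty intersections come out naturally from the relations involving $e_0$.

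There is, however, a false step in Step 2. In the bullet for $0\notin S$ with $\bigcap_{i\in S}L_i=\emptyset$, you claim that every monomial of $\partial e_S$ indexes a subfamily with empty intersection. This is not true. Take $L_1=\{x_1=0\}$, $L_2=\{x_2=0\}$, $L_3=\{x_1=x_2\}$, $L_4=\{x_1=1\}$ in $\BC^2$ and $S=\{1,2,3,4\}$. The forms $x_1,x_2,x_1-x_2,x_1-x_0$ are dependent and $L_1\cap L_4=\emptyset$, yet the term $e_{123}$ of $\partial e_{1234}$ has intersection $\{0\}$. Your deletion argument only covers those $j$ with $F_j\in\operatorname{span}(F_i)_{i\in S\setminus\{j\}}$, which are the ``appropriate $j$'' you mention. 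But $\partial e_S$ contains $e_{S\setminus\{j\}}$ for every $j\in S$.

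The repair is short. If $F_j\notin\operatorname{span}(F_i)_{i\in S\setminus\{j\}}$, then every linear relation among $(F_i)_{i\in S}$ has zero coefficient at $j$, so $S\setminus\{j\}$ is itself dependent. Then $e_{S\setminus\{j\}}\in I_\bullet(\SA)$ in either case:
\begin{itemize}
\item if its intersection is empty, it is a monomial relation;
\item if its intersection is nonempty, it equals $\pm e_k\wedge\partial e_{S\setminus\{j\}}$ for any $k\in S\setminus\{j\}$, and $\partial e_{S\setminus\{j\}}$ is a first-kind relation by your first bullet.
\end{itemize}
So your conclusion that all terms lie in $I_\bullet(\SA)$ survives. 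This also shows that reading ``linearly dependent'' in the paper's affine definition as dependence of the homogenizations gives the same ideal.

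A smaller misstatement occurs in the $0\in S$ case. The sentence ``$\{H_0\}\cup\{H_i\}_{i\in T}$ is dependent exactly when $\bigcap_{i\in T}H_i\subseteq H_0$'' is wrong. For $T=\{1,2,3\}$ above, the family is dependent, but $\bigcap_{i\in T}H_i$ is the $x_0$-axis, which is not contained in $H_0$. The correct criterion is the one in your next sentence: either $x_0\in\operatorname{span}(F_i)_{i\in T}$ (equivalently $\bigcap_{i\in T}L_i=\emptyset$) or $(F_i)_{i\in T}$ is dependent. That criterion is what you actually use, so only the wording needs fixing. With these two corrections the argument is complete.
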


An alternative description of the homogeneous components of the Orlik--Solomon algebra is provided by the Brieskorn decomposition. This decomposition expresses each homogeneous component of an affine hyperplane arrangement as a direct sum of homogeneous components corresponding to central subarrangements arising from intersections of hyperplanes in $\SA$. To formulate this decomposition, we first recall the definition of a stratum.

\begin{definition}
Let $\SA=\{L_1, \ldots, L_m\}$ be an affine hyperplane arrangement. A stratum of $\SA$ of codimension $p$ is the intersection of $k$ hyperplanes $S:=L_{i_1} \cap  \cdots \cap L_{i_k}$ such that $S$ has codimension $p$ in $\BC^n.$
\end{definition}

The strata of the arrangement $\SA$ form a poset $\SL(\SA)$ with order given by reverse inclusion. It has the least element $\BC^n$ and the maximal element the empty set $\emptyset$. We denote by $\SL_p(\SA)$ the set of all strata of codimension $p$.

The decomposition in the following proposition is known as the Brieskorn decomposition.

\begin{prop}\cite[Equation 2.11]{Orlik-Solomon}
For each stratum $S$ of codimension $p$, let $\SA_S$ denote the central subarrangement $\SA_S=\{L \in \SA, S \subseteq L\}$ of $\SA$. Then,
$$ A_p(\SA)= \bigoplus_{S \in \SL_p(\SA)} A_p(\SA_S).$$
\end{prop}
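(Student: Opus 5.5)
The plan is to prove the Brieskorn decomposition $A_p(\SA)=\bigoplus_{S \in \SL_p(\SA)} A_p(\SA_S)$ in two halves: first produce a natural surjection from the direct sum onto $A_p(\SA)$, then show injectivity by a basis count.

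\textbf{Step 1: the maps.} Since $\SA_S$ is a subset of $\SA$, the inclusion of generators $e_i \mapsto e_i$ for $L_i \in \SA_S$ induces a map of exterior algebras. This map sends the ideal $I_\bullet(\SA_S)$ into $I_\bullet(\SA)$: a dependent subset of $\SA_S$ is dependent in $\SA$, and $\SA_S$ is central, so it has no empty-intersection relations. This gives algebra maps $A_\bullet(\SA_S) \to A_\bullet(\SA)$, and summing their degree-$p$ parts gives $\Phi:\bigoplus_{S\in\SL_p}A_p(\SA_S)\to A_p(\SA)$.

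\textbf{Step 2: surjectivity.} $A_p(\SA)$ is spanned by monomials $e_{i_1}\wedge\cdots\wedge e_{i_p}$. If $L_{i_1}\cap\cdots\cap L_{i_p}=\emptyset$ the monomial vanishes. If the intersection is nonempty but has codimension less than $p$, the hyperplanes are linearly dependent. Then the dependence relation $\sum_j (-1)^{j+1} e_{i_1}\wedge\cdots\widehat{e_{i_j}}\cdots\wedge e_{i_p}$ lies in the ideal. Wedging it with $e_{i_1}$ shows the monomial itself lies in the ideal: only the $j=1$ term survives, since every other term contains $e_{i_1}$ twice. Otherwise the intersection is a codimension-$p$ stratum $S$, all $L_{i_j}$ lie in $\SA_S$, and the monomial lies in the image of $A_p(\SA_S)$.

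\textbf{Step 3: injectivity, the main obstacle.} Relations in $A_p(\SA)$ might a priori mix monomials coming from different strata, and ruling this out is the hard part. I would handle it with a dimension count using no-broken-circuit bases. Fix a linear order on $\SA$. Orlik--Solomon show that the classes of $\mathrm{nbc}$ monomials form a basis of $A_p(\SA)$, and likewise of each $A_p(\SA_S)$ for the induced order. An independent $p$-subset with nonempty intersection determines a unique stratum $S$, namely its intersection. A broken circuit of $\SA$ contained in $\SA_S$ is a broken circuit of $\SA_S$, because circuits in a set of hyperplanes through $S$ are the same in both arrangements. Hence the $\mathrm{nbc}$ $p$-sets of $\SA$ are exactly the disjoint union over $S\in\SL_p$ of the $\mathrm{nbc}$ $p$-sets of $\SA_S$. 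Therefore
$$\dim A_p(\SA)=\sum_{S\in\SL_p(\SA)}\dim A_p(\SA_S),$$
and a surjection between finite-dimensional spaces of equal dimension is an isomorphism.

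Alternatively, to avoid the $\mathrm{nbc}$ input, I would go through Proposition \ref{Orlik--Solomon}. One shows that the image of $A_p(\SA_S)$ is detected by restricting to a small tubular neighbourhood of a generic point of $S$, which gives compatible projections $A_p(\SA)\to A_p(\SA_S)$ that split $\Phi$. I expect the $\mathrm{nbc}$ count to be the cleaner route.
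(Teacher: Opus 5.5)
Your argument is correct, but one justification needs tightening. For the count you need that every nbc $p$-set $T$ of $\SA_S$ is also nbc in $\SA$. Your claim that a broken circuit of $\SA$ lying in $\SA_S$ is a broken circuit of $\SA_S$ is the right one. However, the reason you give covers only circuits already contained in $\SA_S$, and that yields the opposite inclusion, which is redundant once you have surjectivity. A broken circuit $B\subseteq T$ of $\SA$ is $C\setminus\{H\}$ for a circuit $C$ whose deleted hyperplane $H$ is not a priori in $\SA_S$. The missing observation is this: $B$ is independent and $C$ is dependent with nonempty intersection, so $\codim(\cap C)=\codim(\cap B)$ and hence $\cap C=\cap B\supseteq \cap T=S$. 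Thus $H\supseteq S$, $C\subseteq \SA_S$, and $B$ is a broken circuit of $\SA_S$ for the induced order.

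As for the route, the paper gives no proof and only cites Orlik--Solomon, where the decomposition comes from a grading of the exterior algebra rather than from a basis theorem. One writes the exterior algebra as $\bigoplus_X E_X$, with $E_X$ spanned by the monomials $e_T$ such that $\cap T=X$ (allowing $X=\emptyset$), and checks that $I_\bullet(\SA)$ is homogeneous for this grading. Dependent monomials and empty-intersection monomials lie in the ideal, the former by your wedge trick. Modulo those, a spanning element $e_U\wedge\partial e_T$ with $T$ dependent reduces to the terms $\pm e_U\wedge e_{T\setminus\{t\}}$ for which $U\cup(T\setminus\{t\})$ is independent. By the rank argument above, each such term has intersection $\cap(U\cup T)$. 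Hence $A_\bullet(\SA)=\bigoplus_X E_X/(I_\bullet(\SA)\cap E_X)$, and the summand for $X$ is concentrated in degree $\codim X$. Since the relations in $I_\bullet(\SA)\cap E_X$ involve only hyperplanes through $X$, that summand is identified with $A_p(\SA_X)$. This argument is elementary and yields slightly more: a grading of the whole algebra by intersections, compatible with products. Yours is shorter on the page because injectivity becomes a count, but it imports the nbc basis theorem for $\SA$ and for every $\SA_S$, which is considerably deeper than the statement being proved.
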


The following definition is an analogue of the boundary map for affine hyperplane arrangements.

\begin{lemma}
\label{defsequence}
The homogeneous components $A_k(\SA)$ form a complex:
$$0 \to A_n(\SA) \xrightarrow{\partial} A_{n-1}(\SA) \xrightarrow{\partial} \cdots \xrightarrow{\partial} A_1(\SA) \xrightarrow{\partial} A_0(\SA) \xrightarrow{\partial} 0,$$
where the boundary operator $\partial: A_k \to A_{k-1}$ is the direct sum of $\partial_S: A_k(\SA_S) \to A_{k-1}(\SA_T)$ for $S \subseteq T$ and $k=\codim(S)=\codim(T)+1.$ Here, $\partial_S$ is the derivation map defined in Lemma \ref{lemmacomplex} for the central subarrangement $\SA_S$.
\end{lemma}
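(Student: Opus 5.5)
The plan is to make the definition of $\partial$ precise via the Brieskorn decomposition, and then to reduce both well-definedness and $\partial\circ\partial=0$ to the central case, Lemma \ref{lemmacomplex}.

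First I would make the source and target of each $\partial_S$ explicit. Fix a stratum $S\in\SL_k(\SA)$. By Lemma \ref{lemmacomplex} applied to the central arrangement $\SA_S$, we get a map $\partial_S: A_k(\SA_S)\to A_{k-1}(\SA_S)$. Next, apply the Brieskorn decomposition to the central arrangement $\SA_S$ itself. Its strata of codimension $k-1$ are exactly the strata $T\in\SL_{k-1}(\SA)$ with $S\subseteq T$. Moreover, for such $T$ we have $(\SA_S)_T=\SA_T$, because every hyperplane containing $T$ also contains $S$. This gives
$$A_{k-1}(\SA_S)=\bigoplus_{T\in \SL_{k-1}(\SA),\ S\subseteq T} A_{k-1}(\SA_T).$$
So $\partial_S$ splits into components $A_k(\SA_S)\to A_{k-1}(\SA_T)$, which are the maps named in the statement.

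To compare these decompositions inside $A_\bullet(\SA)$, I would use the algebra maps $A_\bullet(\SA_S)\to A_\bullet(\SA)$ induced by $e_i\mapsto e_i$. These are well defined, since the relations of $\SA_S$ are among those of $\SA$. They are injective in degree $k$, since by the Brieskorn decomposition $A_k(\SA_S)$ is a direct summand of $A_k(\SA)$. Concretely, $A_k(\SA_S)$ is spanned by the monomials $e_{i_1}\wedge\cdots\wedge e_{i_k}$ with $L_{i_1}\cap\cdots\cap L_{i_k}=S$. On such a monomial, $\partial$ is the usual alternating sum. Each face $e_{i_1}\wedge\cdots\widehat{e}_{i_j}\cdots\wedge e_{i_k}$ comes from an independent family of $k-1$ hyperplanes, so its intersection is a stratum $T\supseteq S$ of codimension $k-1$, and it lies in the summand $A_{k-1}(\SA_T)$. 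This shows that $\partial$ is a well-defined linear map $A_k(\SA)\to A_{k-1}(\SA)$, given by the same formula as in the central case.

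For $\partial\circ\partial=0$, I would check it on each summand $A_k(\SA_S)$. The key point is a compatibility: for $T\supseteq S$, the restriction of $\partial_S$ (the derivation on $A_\bullet(\SA_S)$) to the subalgebra image of $A_\bullet(\SA_T)$ equals $\partial_T$. This holds because both are derivations sending every generator $e_i$ with $L_i\supseteq T$ to $1$, and the remark after Lemma \ref{lemmacomplex} says a derivation is determined by these values. Hence on $A_k(\SA_S)$, the affine composite $\partial\circ\partial$ equals $\partial_S\circ\partial_S$ computed inside $A_\bullet(\SA_S)$, followed by the inclusion into $A_\bullet(\SA)$. This vanishes by Lemma \ref{lemmacomplex}.

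I expect the main obstacle to be this bookkeeping of identifications. Specifically, one must ensure that the Brieskorn summand $A_{k-1}(\SA_T)$, viewed inside $A_{k-1}(\SA_S)$, maps to the same subspace as the summand $A_{k-1}(\SA_T)$ of $A_{k-1}(\SA)$. Also, the degree-$(k-2)$ target of the second $\partial$ must be compared along the chain of maps $A_\bullet(\SA_{T'})\to A_\bullet(\SA_T)\to A_\bullet(\SA_S)\to A_\bullet(\SA)$. I would handle both by checking on the spanning monomials described above, where all these maps are literally $e_i\mapsto e_i$.
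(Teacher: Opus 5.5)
Your proposal is correct and takes essentially the same approach as the paper, whose proof is the one-line reduction to the central case, Lemma \ref{lemmacomplex}. You supply the bookkeeping the paper leaves implicit. This is the decomposition $A_{k-1}(\SA_S)=\bigoplus_{T\supseteq S}A_{k-1}(\SA_T)$ obtained from $(\SA_S)_T=\SA_T$, together with the compatibility $\partial_S\circ\iota=\iota\circ\partial_T$; that compatibility is best justified by noting that both sides are $\iota$-derivations agreeing on the generators $e_i$, or by comparing them directly on monomials.
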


\begin{proof}
The result follows from Lemma \ref{lemmacomplex}.
\end{proof}

\begin{remark}
	Note that if the arrangement is not central, this is not a derivation of the Orlik--Solomon algebra $A_\bullet(\SA)$.
\end{remark}

We conclude this section by recalling the definition of the characteristic polynomial. It encodes several invariants of the arrangement. For instance, when $\SA$ is the complexification of a real arrangement $\SA_\BR \subseteq \BR^n$, the characteristic polynomial determines the number of regions and bounded regions determined of the real arrangement (see Zaslavsky's theorem \cite[Theorem C]{zaslavsky1997facing}).

\begin{definition}
Let $\mu_{\SA}$ denote the Möbius function of the poset of strata $\SL(\SA)$. The characteristic polynomial of the arrangement $\chi_\SA(t)$ is defined by
$$\chi_\SA(t) = \sum_{S \in \SL(A)} \mu_\SA(S)t^{\dim(S)}.$$
\end{definition}

\subsection{The case of a projective arrangement}
This section concerns projective hyperplane arrangements $\SP=\{P_1, \ldots, P_m, P_{m+1}\} \subseteq \BP^n_\BC$. Their Orlik--Solomon algebra is constructed from the central case as follows: consider the affine cone over $\BP^n_\BC$, which yields a central arrangement $\tilde{\SP}=\{\tilde{P}_1, \ldots,\tilde{P}_m,\tilde{P}_{m+1} \}\subseteq \BC^{n+1}.$ The Orlik--Solomon algebra of the central arrangement $\tilde{\SP}$ is given by $$A_\bullet(\tilde{\SP})=\bigwedge \nolimits^{\bullet} (\tilde{e}_1, \ldots,\tilde{e}_m, \tilde{e}_{m+1})/ I_\bullet(\tilde{\SP}),$$
where $I_\bullet(\tilde{\SP})$ is as defined in Definition \ref{Orlik--solomondef}. Moreover, we have the derivation map $$\partial: A_\bullet(\tilde{\SP}) \to A_{\bullet-1}(\tilde{\SP})$$ introduced in Lemma \ref{lemmacomplex}.

\begin{definition}
The Orlik--Solomon algebra of a projective hyperplane arrangement $\SP \subseteq \BP^n_\BC$ is defined as $$A_\bullet(\SP) \simeq \ker\left(\partial: A_\bullet(\tilde{\SP}) \to A_{\bullet-1}(\tilde{\SP})\right).$$
\end{definition}

As before, there is an isomorphism between the Orlik--Solomon algebra and the cohomology of the complement.

\begin{prop}\cite[Theorem 5.2]{Orlik-Solomon}
The cohomology of the complement is isomorphic to the Orlik--Solomon algebra:
$$A_\bullet(\SP) \simeq H^\bullet(\BP^n_\BC \setminus \SP).$$
Moreover, for $P_i=\{f_i=0\} \in \SP,$ this isomorphism sends $\tilde{e}_i-\tilde{e}_j$ to the class of the differential form $d \log(f_i/f_j).$
\end{prop}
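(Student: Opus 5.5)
The plan is to deduce this from the central case, Proposition \ref{Orlik--Solomon2}, applied to the cone $\tilde{\SP} \subseteq \BC^{n+1}$, combined with the affine case, Proposition \ref{Orlik--Solomon}. Write $M=\BP^n_\BC \setminus \SP$ and $\tilde M=\BC^{n+1}\setminus \tilde{\SP}$. Let $\pi:\tilde M \to M$ be the projection, and let $f_{m+1}$ be a linear form defining $P_{m+1}$. The first step is to trivialize the $\BC^*$-bundle $\pi$ via
$$\tilde M \xrightarrow{\ \sim\ } M \times \BC^*, \qquad v \mapsto ([v], f_{m+1}(v)).$$
This map is an isomorphism because $f_{m+1}$ does not vanish on $\tilde M$, and its inverse is $([w],t)\mapsto t\,w/f_{m+1}(w)$. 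By Künneth we then get
$$H^\bullet(\tilde M)=\pi^*H^\bullet(M)\oplus \tilde e_{m+1}\wedge \pi^*H^\bullet(M),$$
where $\tilde e_{m+1}=[d\log f_{m+1}]$ is the pullback of the generator of $H^1(\BC^*)$. In particular $\pi^*$ is injective.

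Next I identify $\pi^*H^\bullet(M)$ inside $A_\bullet(\tilde{\SP})$. The space $M$ is the complement of the affine arrangement $\{P_i \cap \BC^n\}_{i\le m}$ in $\BC^n=\BP^n_\BC\setminus P_{m+1}$, with affine coordinates given by the ratios of the coordinates to $f_{m+1}$. By Proposition \ref{Orlik--Solomon}, $H^\bullet(M)$ is generated as an algebra by the classes $[d\log(f_i/f_{m+1})]$. Their pullbacks are $\tilde e_i-\tilde e_{m+1}$, so $\pi^*H^\bullet(M)$ is the subalgebra $B$ of $A_\bullet(\tilde{\SP})$ generated by the elements $\tilde e_i-\tilde e_{m+1}$. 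Since $\partial$ is a derivation (the remark after Lemma \ref{lemmacomplex}) and $\partial(\tilde e_i-\tilde e_{m+1})=1-1=0$, we get $B\subseteq \ker\partial$.

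For the converse inclusion, take $x=a+\tilde e_{m+1}\wedge b$ with $a,b\in B$. Using the derivation property and $\partial a=\partial b=0$, we compute $\partial x=b$. Hence $\partial x=0$ forces $b=0$, so $\ker\partial = B \simeq H^\bullet(M)$. This gives $A_\bullet(\SP)\simeq H^\bullet(M)$, and the isomorphism sends $\tilde e_i-\tilde e_{m+1}$ to $[d\log(f_i/f_{m+1})]$. The general formula for $\tilde e_i-\tilde e_j$ follows by taking the difference $(\tilde e_i-\tilde e_{m+1})-(\tilde e_j-\tilde e_{m+1})$.

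The main obstacle is the identification of $\pi^*H^\bullet(M)$ with $B$. This requires checking that the Künneth decomposition is compatible with the Orlik--Solomon presentation of $H^\bullet(\tilde M)$, that is, that the isomorphism of Proposition \ref{Orlik--Solomon2} intertwines $\pi^*$ with the obvious algebra map sending $e_i\mapsto \tilde e_i-\tilde e_{m+1}$. At the level of forms this holds because $\pi^* d\log(f_i/f_{m+1})=d\log f_i-d\log f_{m+1}$. One also has to keep track of the fact that $\partial$ is contraction with the Euler vector field, which kills pulled-back forms; this gives a geometric cross-check of $B\subseteq\ker\partial$.
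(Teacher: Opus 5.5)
Your argument is correct. The paper itself gives no proof: it only cites \cite[Theorem 5.2]{Orlik-Solomon}, whose statement concerns arrangements in a vector space. The passage to $\BP^n_\BC$, with $A_\bullet(\SP)$ \emph{defined} as $\ker\partial$, is left implicit.

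You supply exactly that missing reduction:
\begin{itemize}
\item the trivialization $\tilde M\simeq M\times\BC^*$ given by $v\mapsto([v],f_{m+1}(v))$;
\item K\"unneth, giving $H^\bullet(\tilde M)=\pi^*H^\bullet(M)\oplus\tilde e_{m+1}\wedge\pi^*H^\bullet(M)$;
\item the identification of $\pi^*H^\bullet(M)$ with the subalgebra $B$ generated by the $\tilde e_i-\tilde e_{m+1}$;
\item the derivation identity $\partial(a+\tilde e_{m+1}\wedge b)=b$ for $a,b\in B$, which gives $\ker\partial=B$.
\end{itemize}
The citation buys brevity. Your route explains why $\ker\partial$ is the right definition and proves the formula $\tilde e_i-\tilde e_j\mapsto[d\log(f_i/f_j)]$; indeed $\pi^*[d\log(f_i/f_j)]=\tilde e_i-\tilde e_j$ directly. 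As a by-product, it also makes explicit the isomorphism $A_\bullet(\SA)\simeq A_\bullet(\SP)$, $e_i\mapsto\tilde e_i-\tilde e_{m+1}$, stated at the end of that subsection.

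Three small simplifications are available.
\begin{itemize}
\item The step you flag as the main obstacle is not really one. Both Orlik--Solomon isomorphisms are algebra maps determined by $\tilde e_i\mapsto[d\log f_i]$, so compatibility with $\pi^*$ is exactly your one-line computation $\pi^*d\log(f_i/f_{m+1})=d\log f_i-d\log f_{m+1}$.
\item You can avoid the affine theorem entirely. $B\subseteq\pi^*H^\bullet(M)$ holds because the $f_i/f_{m+1}$ are invertible functions on $M$. Also $A_\bullet(\tilde{\SP})=B+\tilde e_{m+1}\wedge B$, since $\tilde e_i=(\tilde e_i-\tilde e_{m+1})+\tilde e_{m+1}$ and $\tilde e_{m+1}^2=0$. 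Comparing dimensions with $\dim H^\bullet(\tilde M)=2\dim H^\bullet(M)$ then forces $B=\pi^*H^\bullet(M)$.
\item Injectivity of $\pi^*$ follows at once from the section $[w]\mapsto w/f_{m+1}(w)$. The step $\ker\partial\subseteq B$ only uses the existence of a decomposition $x=a+\tilde e_{m+1}\wedge b$, not its uniqueness.
\end{itemize}
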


The Orlik--Solomon algebra of a projective hyperplane arrangement is constructed from the central hyperplane arrangement corresponding to its affine cone. It is also related to affine hyperplane arrangements as follows.
Consider $P_{m+1}$ as the hyperplane at infinity and for $1 \leq i \leq m$, set $L_i=\{f_i/f_{m+1}=0\} \subseteq \BC^n$. Therefore, we have $$\BP^n_\BC \setminus P_{m+1} \simeq \BC^n \text{ and } \BP^n_\BC \setminus \SP \simeq \BC^n \setminus \SA,$$ where $\SA=\{L_1, \ldots, L_m\}.$ In particular, there is an isomorphism: $$A_\bullet(\SA)\simeq A_\bullet(\SP),$$ which is given by sending $e_i$ to $\tilde{e_i}- \tilde{e}_{m+1}.$

\section{Cohomology at finite distance}
\label{section2}

\subsection{Cohomology at finite distance of a complex arrangement}
\label{section2.1}
In this section, we introduce the main object of study, the cohomology group $H^n_{< \infty}(\BC^n \setminus \SA).$ We first establish the isomorphism
$$H^k_{< \infty}(\BC^n \setminus \SA) \simeq H_{2n-k}(\BC^n, \SA)$$
arising from Poincaré duality, and then present an Orlik--Solomon-type description of $H^n_{< \infty}(\BC^n \setminus \SA).$
Lastly, we prove that the relative homology and its Poincaré dual, the cohomology at finite distance, vanish for all $k \ne n$, and derive a formula for the dimension $\dim(H_n(\BC^n, \SA))=\dim(H^n_{< \infty}(\BC^n \setminus \SA)).$

\begin{remark}
From this point forward, we will only consider essential affine hyperplane arrangements, that is, arrangements with a zero-dimensional stratum. However, if $\SA \subseteq \BC^n$ is not essential and $r$ denotes the minimal dimension of a stratum, one can obtain an essential arrangement in $\BC^r$ by intersecting with an $(n-r)$-dimensional affine subspace of $\BC^n,$ that is generic with respect to $\SA.$ In particular, all results stated for essential arrangements also apply to non-essential ones, up to a shift in the dimension.
\end{remark}

\begin{definition}
Let $\SA \subseteq \BC^n$ be an essential affine hyperplane arrangement and let $j: \BC^n \setminus \SA \to \BC^n$ denote the open immersion. We define the cohomology at finite distance with rational coefficients as $$H^n_{< \infty}(\BC^n \setminus \SA):=H^n_c(\BC^n, Rj_*\BQ_{\BC^n \setminus \SA}).$$
\end{definition}

\begin{lemma}
We have an isomorphism between the cohomology at finite distance of the complement and the relative cohomology:
$$H^n_{< \infty}(\BC^n \setminus \SA) \simeq H_n(\BC^n, \SA).$$
\end{lemma}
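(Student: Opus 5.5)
The plan is to derive the isomorphism from Verdier duality on the oriented real $2n$-manifold $\BC^n$, working throughout in the derived category of sheaves of $\BQ$-vector spaces. In fact I will prove the statement in every degree, $H^k_{<\infty}(\BC^n\setminus\SA)\simeq H_{2n-k}(\BC^n,\SA)$, which gives the case $k=n$. Write $U=\BC^n\setminus\SA$, $j:U\hookrightarrow\BC^n$ for the open immersion, $i:\SA\hookrightarrow\BC^n$ for the closed immersion, and $\mathbb{D}$ for the Verdier dual.

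The first step is to dualize the compactly supported cohomology. Verdier duality gives
$$H^k_c(\BC^n,Rj_*\BQ_U)^\vee\simeq H^{-k}(\BC^n,\mathbb{D}Rj_*\BQ_U).$$
Since $\mathbb{D}Rj_*\simeq j_!\mathbb{D}$, and $U$ is an oriented manifold of real dimension $2n$ so that $\mathbb{D}\BQ_U\simeq\BQ_U[2n]$, the right-hand side becomes
$$H^{-k}(\BC^n,\mathbb{D}Rj_*\BQ_U)\simeq H^{2n-k}(\BC^n,j_!\BQ_U).$$

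The second step identifies this with relative cohomology. The short exact sequence $0\to j_!\BQ_U\to\BQ_{\BC^n}\to i_*\BQ_\SA\to0$ shows that $H^{m}(\BC^n,j_!\BQ_U)\simeq H^{m}(\BC^n,\SA)$. This uses the standard fact that relative sheaf cohomology agrees with relative singular cohomology. That fact holds here because $\SA$ is a closed subcomplex for a suitable triangulation of $\BC^n$; in particular $(\BC^n,\SA)$ is a good pair, since $\SA$ is a deformation retract of a neighbourhood of itself.

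The third step passes from cohomology to homology. Over the field $\BQ$, the universal coefficient theorem gives $H^{2n-k}(\BC^n,\SA)\simeq H_{2n-k}(\BC^n,\SA)^\vee$. Combining the three steps yields
$$H^k_{<\infty}(\BC^n\setminus\SA)^\vee\simeq H_{2n-k}(\BC^n,\SA)^\vee.$$

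It remains to remove the duals, which needs finite-dimensionality. The relative homology is finite-dimensional because $(\BC^n,\SA)$ has the homotopy type of a finite CW pair, as both spaces are complex algebraic and hence finitely triangulable. Double-dualizing then gives $H^k_{<\infty}(\BC^n\setminus\SA)\simeq H_{2n-k}(\BC^n,\SA)$, and setting $k=n$ gives the claim.

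An alternative route avoids dualizing twice. One can write $H^k_c(\BC^n,Rj_*\BQ_U)\simeq \operatorname{Hom}(H^{2n-k}(\BC^n,j_!\BQ_U),\BQ)$ and combine this directly with the identification of $H^{2n-k}(\BC^n,j_!\BQ_U)$ above.

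I expect the main obstacle to be bookkeeping rather than substance. The delicate points are the commutation $\mathbb{D}Rj_*\simeq j_!\mathbb{D}$, the constructibility of all the sheaves involved (which guarantees finite-dimensionality and legitimizes the double dual), and the comparison between sheaf-theoretic and singular relative cohomology. All of these are standard for algebraic stratifications, so I would cite the literature for them rather than reprove them.
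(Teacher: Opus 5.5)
Your proposal is correct and follows the paper's own route. The paper writes $H^n_{<\infty}(\BC^n\setminus\SA)=H^n(Ra_!Rj_*\BQ_{\BC^n\setminus\SA})$ and $H_n(\BC^n,\SA)=H^n(Ra_*j_!\BQ_{\BC^n\setminus\SA})^\vee$, with $a:\BC^n\to\{\star\}$, and then simply invokes Verdier duality; your steps ($\mathbb{D}Rj_*\simeq j_!\mathbb{D}$ for constructible complexes, $\mathbb{D}\BQ_{\BC^n\setminus\SA}\simeq\BQ_{\BC^n\setminus\SA}[2n]$, the comparison with singular relative cohomology, universal coefficients, and finite-dimensionality to remove the duals) spell out what the paper leaves implicit.
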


\begin{proof}
Let $a: \BC^n \to \{\star\}$ denote the map to a point. Using the language of derived functors, the cohomology at finite distance can be expressed as $$H^n_{< \infty}(\BC^n \setminus \SA)=H^n_c(\BC^n, Rj_*\BQ_{\BC^n \setminus \SA})=H^n( Ra_!Rj_*\BQ_{\BC^n \setminus \SA})$$ and the relative homology as $$H_n(\BC^n, \SA)=(H^n(\BC^, \SA))^\vee =H^n\left((Ra_*Rj_! \BQ_{\BC^n \setminus \SA})\right)^\vee.$$ The result then follows by Verdier duality.
\end{proof}

\begin{theorem}
\label{thmcharacterization}
Let $\SA \subseteq \BC^n$ be an essential affine hyperplane arrangement.
Then,
$$H^n(\BC^n, \SA)\simeq \coker\left(\partial^\vee: A_{n-1}^\vee \to A_n^\vee\right).$$
Dually,
$$H_n(\BC^n, \SA) \simeq H^n_{< \infty}(\BC^n \setminus \SA) \simeq \ker\left(\partial: A_n(\SA) \to A_{n-1}(\SA)\right).$$
\end{theorem}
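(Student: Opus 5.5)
The plan is to compute $H^\bullet(\BC^n,\SA)$ from the long exact sequence of the pair together with a combinatorial model for $H^\bullet(\SA)$, the cohomology of the union of the hyperplanes. Since $\BC^n$ is contractible, the long exact sequence gives $H^k(\BC^n,\SA)\simeq \tilde H^{k-1}(\SA)$ for all $k$. So it suffices to compute the reduced cohomology of the union $\SA=\bigcup_i L_i$.

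For this I would use the Mayer--Vietoris spectral sequence of the closed cover $\{L_1,\dots,L_m\}$. Every nonempty finite intersection $L_{i_1}\cap\cdots\cap L_{i_k}$ is an affine subspace and hence contractible. The spectral sequence therefore degenerates, and $H^\bullet(\SA)$ is the cohomology of the \v{C}ech complex of the nerve, that is, of the simplicial complex $N$ with simplices the subsets $\{i_1<\cdots<i_k\}$ having nonempty intersection. Thus $\tilde H^{k-1}(\SA)\simeq \tilde H^{k-1}(N;\BQ)$. Dually, $H_n(\BC^n,\SA)\simeq \tilde H_{n-1}(N)$, and the simplicial boundary map on oriented simplices $e_{i_1}\wedge\cdots\wedge e_{i_k}$ has exactly the form of $\partial$ in Lemma~\ref{lemmacomplex}.

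The next step compares the simplicial chain complex of $N$, with $k$-subsets placed in degree $k$ and the augmentation included, against the complex $(A_\bullet(\SA),\partial)$. The exterior algebra $E_\bullet$ modulo the monomials with empty intersection is exactly the augmented chain complex $\tilde C_{\bullet-1}(N)$. The Orlik--Solomon algebra is a further quotient of it by the ideal generated by the $\partial$ of dependent sets, and this ideal is $\partial$-stable. I would then show that the quotient map $\tilde C_{\bullet-1}(N)\to A_\bullet(\SA)$ is a quasi-isomorphism. Combined with Lemma~\ref{lemmasequence}, this will give the result.

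Concretely, I would filter both complexes by the stratum $S=\bigcap L_{i_j}$ and use the Brieskorn decomposition: $\partial$ lowers codimension by at most one, and the associated graded pieces are central. On the $N$ side, the simplices with intersection exactly $S$ form a relative complex $(\Delta_S,\partial\Delta_S)$, where $\Delta_S$ is the full simplex on $\SA_S$. This relative complex can be compared with the central complex $A_\bullet(\SA_S)$, which is acyclic by Lemma~\ref{lemmasequence}, except in the top degree. A cleaner alternative, which I would try first, is a deletion--restriction induction on $m$. One uses the short exact sequences $0\to A_\bullet(\SA')\to A_\bullet(\SA)\to A_{\bullet-1}(\SA'')\to 0$ and checks that they are compatible with $\partial$ up to sign. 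These match the Mayer--Vietoris sequence $\SA=\SA'\cup L$ with $\SA'\cap L=\SA''$, and a five-lemma argument completes the induction. From the outcome $H_k(A_\bullet,\partial)\simeq \tilde H_{k-1}(\SA)$, the dimension and vanishing statements follow. The top-degree identification $H_n(\BC^n,\SA)\simeq\ker(\partial\colon A_n\to A_{n-1})$ uses that $A_{n+1}=0$, since essential arrangements have nothing beyond codimension $n$. The cohomological statement $H^n(\BC^n,\SA)\simeq\coker\partial^\vee$ then follows by dualizing.

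The main obstacle is the sign and compatibility check showing that the deletion--restriction sequence commutes with $\partial$. The restriction map $A_\bullet(\SA)\to A_{\bullet-1}(\SA'')$ is not a derivation-compatible map in the non-central case. I expect to handle this by verifying compatibility on the Brieskorn summands, stratum by stratum, where everything reduces to central arrangements and Lemma~\ref{lemmacomplex} applies.
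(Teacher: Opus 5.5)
Your argument takes a genuinely different route from the paper's, and it can be completed, but several steps need repair.

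\textbf{Comparison with the paper.} The paper never passes through $\tilde H^{\bullet-1}(\SA)$ or a nerve. It applies Looijenga's resolution $0\to j_!\BQ\to\BQ_{\BC^n}\to\bigoplus_{S\in\SL_1(\SA)}(i_S)_*\BQ_S\otimes A_1(\SA_S)^\vee\to\cdots$, which is indexed by strata. Its stalk at a point of a stratum $S$ is the dual of the central complex $A_\bullet(\SA_S)$, so its exactness is Lemma~\ref{lemmasequence}. Since all strata are contractible, the spectral sequence collapses and gives $H^p(\BC^n,\SA)\simeq H^p(A_\bullet^\vee,\partial^\vee)$ in one step.

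You use instead the \v{C}ech resolution of the closed cover, which is indexed by subsets and trivially exact stalkwise. This is topologically more elementary. However, it leaves you owing a purely combinatorial fact: that $q\colon\tilde C_{\bullet-1}(N)\to A_\bullet(\SA)$ is a quasi-isomorphism. That fact essentially says Looijenga's complex is a reduced model of the \v{C}ech complex, and all the real content of your proof sits there. In exchange, you get an explicit chain-level comparison and do not need to cite Looijenga.

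\textbf{Repairs.} First, the span $J$ of monomials with empty intersection is not $\partial$-stable. For two parallel hyperplanes, $\partial(e_1\wedge e_2)=e_2-e_1\notin J$. So $E_\bullet/J$ is not a quotient complex, and the differential of $A_\bullet(\SA)$ is not induced from $E_\bullet$ by passing to quotients. What is true is the following:
\begin{itemize}
\item $\tilde C_{\bullet-1}(N)$ is the \emph{subcomplex} of $(E_\bullet,\partial)$ spanned by monomials with nonempty intersection.
\item $q$ is the composite $\tilde C_{\bullet-1}(N)\hookrightarrow E_\bullet\twoheadrightarrow A_\bullet(\SA)$.
\item $q$ is a chain map. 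On independent $e_I$ with $L_I\neq\emptyset$, the Brieskorn-defined $\partial$ is given by the exterior formula. For dependent $I$ with $L_I\neq\emptyset$, both $e_I$ and $\partial e_I$ vanish in $A_\bullet(\SA)$.
\end{itemize}

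Second, in your filtration route, the simplices with intersection exactly $S$ do not give $(\Delta_S,\partial\Delta_S)$. They give the full simplex on $\SA_S$ modulo the subcomplex of \emph{non-spanning} subsets. You then need Folkman's theorem (or the crosscut theorem) to see that its homology is concentrated in degree $\codim S$, with dimension $|\mu_\SA(S)|=\dim A_{\codim S}(\SA_S)$.

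Third, in the deletion--restriction route, $\tilde C(N)/\tilde C(N')$ is not the complex for $\SA''$. It is the shifted augmented chain complex of the nerve of the family $\{H\cap L\}_{H\in\SA',\,H\cap L\neq\emptyset}$. This family has repeated members whenever two hyperplanes of $\SA'$ have the same trace on $L$. The induced map to $A_{\bullet-1}(\SA'')$ factors through the simplicial map $H\mapsto H\cap L$ into $N(\SA'')$. That map is a homotopy equivalence: if $H_1\cap L=H_2\cap L$, the link of $H_2$ is a cone with apex $H_1$. With this, the induction on $m$ goes through, run over all arrangements (not necessarily essential) with base case $m=0$.

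Once this is in place, $q$ gives a morphism of short exact sequences of complexes. The ladder of long exact sequences then commutes automatically and the five lemma applies, with no need to match the topological Mayer--Vietoris sequence. The compatibility with $\partial$ that you flagged as the main obstacle is immediate. On $e_{I'}\wedge e_L$, the extra term $\pm e_{I'}$ lies in $A_\bullet(\SA')$ and dies under restriction. The paper uses the same short exact sequence of complexes in Proposition~\ref{propchar}.
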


\begin{proof}
We will make use of the following exact sequence of sheaves, which was first introduced by Looijenga in \cite[Section 2]{Looijenga}
(note that the $A_k(\SA_S)$ are denoted $E_S$ in his notation):
$$0 \longrightarrow j_!(\SF_{|\BC^n \setminus \SA}) \longrightarrow \SF \longrightarrow \bigoplus_{S \in \SL_1(\SA)} (i_S)_*(\SF_{|S}) \otimes A_1(\SA_S)^\vee \longrightarrow \bigoplus_{S \in \SL_2(\SA)} (i_S)_*(\SF_{|S}) \otimes A_2(\SA_S)^\vee \longrightarrow \cdots$$
where $i_S: S \to \BC^n$ is the closed immersion corresponding to a stratum $S$ and $\SF$ is a sheaf on $\BC^n.$
This sequence induces a spectral sequence (see \cite[Equation (2.3-1)]{Looijenga}):
$$E_1^{p,q}=\bigoplus_{S \in \SL_p(\SA)} H^q(S, \SF_{|S}) \otimes A_p(\SA_S)^\vee \implies H^{p+q}(X, \SA, \SF).$$
Specializing to $X=\BC^n$ and $\SF= \BQ_{\BC^n}$, the constant sheaf on $\BC^n$, the $E_1$-page vanishes for $q \ne 0$, i.e.,: 
$$E_1^{p,0}=\bigoplus_{S \in \SL_p(\SA)} A_p(\SA_S)^\vee= A_p(\SA)^\vee \implies H^{p}(\BC^n, \SA).$$
In particular, the relative cohomology can be computed as
$$H^p(\BC^n, \SA)\simeq H^p(A_\bullet^\vee, \partial^\vee),$$
which is the cohomology of the complex:
$$0 {\longrightarrow} A_0(\SA)^\vee \overset{\partial^\vee}{\longrightarrow} A_1(\SA)^\vee \overset{\partial^\vee}{\longrightarrow} \cdots \overset{\partial^\vee}{\longrightarrow} A_n(\SA)^\vee \overset{\partial^\vee}{\longrightarrow} 0.$$
Dually, the cohomology at finite distance is given by
$$H_p(\BC^n, \SA) \simeq H^{2n-p}_{< \infty}(\BC^n, \SA)\simeq H^p(A_\bullet, \partial),$$
which is the homology of the complex:
$$0 \longrightarrow A_n(\SA) \overset{\partial}{\longrightarrow} A_{n-1}(\SA) \overset{\partial}{\longrightarrow} \cdots \overset{\partial}{\longrightarrow} A_0(\SA) \overset{\partial}{\longrightarrow} 0.$$
In middle degree $p=n,$ this gives $H^n(\BC^n, \SA)\simeq \coker(\partial_{n}^\vee)$ and dually,
\begin{equation*}
H_n(\BC^n, \SA)\simeq H^n_{< \infty}(\BC^n \setminus \SA) \simeq \ker(\partial_n). \qedhere
\end{equation*}
\end{proof}

\begin{remark}
\label{remark}
This theorem in particular implies that $H^n_{< \infty}(\BC^n \setminus \SA)$ embeds into $H^n(\BC^n, \SA)$ since $\ker(\partial) \subseteq A_n(\SA).$
\end{remark}

\begin{prop}
\label{propchar}
For an essential affine hyperplane arrangement $\SA$, we have $$H^{2n-k}_{\infty}(\BC^n \setminus \SA) \simeq H_k(\BC^n, \SA) = 0 \text{ for } k \ne n,$$ and the dimension of the unique non-zero groups is given by:
$$\dim(H^n_{< \infty}(\BC^n \setminus \SA))=\dim(H_n(\BC^n, \SA))=(-1)^n\chi_{\SA}(1).$$
\end{prop}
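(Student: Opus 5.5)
The plan is to work entirely with the complex $(A_\bullet(\SA),\partial)$ of Lemma \ref{defsequence}. The proof of Theorem \ref{thmcharacterization} identifies $H_k(\BC^n,\SA)$ with the homology of this complex in degree $k$. So two things must be shown. First, this complex is acyclic except in degree $n$. Second, its Euler characteristic equals $(-1)^n\chi_\SA(1)$; since only the degree-$n$ homology survives, that Euler characteristic is $(-1)^n\dim H_n$. Poincaré duality \eqref{Poincare1} then transfers the vanishing to $H^{2n-k}_{<\infty}(\BC^n\setminus\SA)$.

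For the vanishing, I would use the Brieskorn decomposition to filter the complex by strata. One route is to realise $(A_\bullet,\partial)$ as the complex computing relative cohomology and argue topologically. The pair $(\BC^n,\SA)$ has $H_k(\BC^n,\SA)\simeq \tilde H_{k-1}(\SA)$, because $\BC^n$ is contractible. The union $\SA$ of affine hyperplanes is covered by contractible closed sets (the hyperplanes), and all their nonempty intersections are affine, hence contractible. By the nerve lemma, $\SA$ is therefore homotopy equivalent to the nerve, and this nerve is in turn homotopy equivalent to the order complex of $\SL(\SA)\setminus\{\BC^n,\emptyset\}$. Via Folkman's theorem, that order complex is a geometric semilattice of rank $n$ for an essential arrangement, hence homotopy equivalent to a wedge of $(n-1)$-spheres. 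Consequently $\tilde H_{k-1}(\SA)=0$ for $k\neq n$. This is the main obstacle: I must justify the Cohen--Macaulay/wedge-of-spheres property (Wachs--Walker for geometric semilattices). An alternative would be a direct algebraic contraction, deleting a hyperplane and running induction with the deletion–restriction exact sequence of Orlik--Solomon complexes. I would try the deletion–restriction approach first if I want to stay self-contained: there is a short exact sequence $0\to A_\bullet(\SA')\to A_\bullet(\SA)\to A_{\bullet-1}(\SA'')\to 0$ compatible with $\partial$ up to sign, and its long exact sequence, combined with induction on $|\SA|$, gives concentration in degree $n$ when $\SA'$ and $\SA''$ remain essential. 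The non-essential case (when $\SA'$ is not essential) must be handled separately; there $\SA'$ has a free direction, so its relative homology vanishes entirely, since the pair retracts onto a pair in which $\SA'$ is a cone over something contractible.

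For the dimension, I compute the Euler characteristic $\sum_k(-1)^k\dim A_k(\SA)$. By the Brieskorn decomposition together with the central-case fact $\dim A_p(\SA_S)=(-1)^p\mu(\BC^n,S)$ (Orlik--Solomon), this gives $\sum_k(-1)^k\dim A_k=\sum_{S}\mu(S)=\chi_\SA(1)$. Since only the degree-$n$ homology survives, this sum equals $(-1)^n\dim H_n(\BC^n,\SA)$, which yields $\dim H_n(\BC^n,\SA)=(-1)^n\chi_\SA(1)$, as required.
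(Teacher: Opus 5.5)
Your topological route is correct, and it differs genuinely from the paper's proof.

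\textbf{Comparison with the paper.} The paper proves both claims by a single algebraic induction on $|\SA|$. The deletion--restriction sequence of complexes $0\to A_\bullet(\SA')\to A_\bullet(\SA)\to A_{\bullet-1}(\SA'')\to 0$ gives vanishing below degree $n$. The resulting short exact sequence in degree $n$ gives $\dim H_n(\BC^n,\SA)=\dim H_n(\BC^n,\SA')+\dim H_{n-1}(L,\SA'')$, and this is matched against the recursion $\chi_\SA=\chi_{\SA'}-\chi_{\SA''}$.

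You get the vanishing instead from $H_k(\BC^n,\SA)\simeq\tilde H_{k-1}(\SA)$, the nerve lemma with the crosscut theorem, and shellability of geometric semilattices. The relevant result there is Wachs--Walker; Folkman's theorem only covers geometric lattices, i.e.\ the central case. You get the dimension from the Euler characteristic $\sum_k(-1)^k\dim A_k(\SA)=\sum_S\mu_\SA(S)=\chi_\SA(1)$, using Brieskorn and $\dim A_p(\SA_S)=(-1)^p\mu_\SA(S)$.

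Your route gives a stronger statement: the union is homotopy equivalent to a wedge of $(n-1)$-spheres, integrally. It also makes the dimension count non-inductive. The cost is importing a nontrivial combinatorial-topology theorem. The paper's argument stays inside the Orlik--Solomon framework and sets up the induction reused in Proposition \ref{propboundedreg}.

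Small slip: in your first paragraph the Euler characteristic should be $\chi_\SA(1)$, not $(-1)^n\chi_\SA(1)$. Your last paragraph computes it correctly.

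\textbf{Error in the fallback.} Your self-contained deletion--restriction fallback contains a false step: a non-essential $\SA'$ need not have vanishing relative homology. Take $\SA=\{x_1=0,\ x_1=1,\ x_2=0\}\subseteq\BC^2$ and $L=\{x_2=0\}$. Then $\SA'$ is two parallel lines, and $H_1(\BC^2,\SA')\simeq\tilde H_0(\SA')\simeq\BQ$.

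In general, if $L$ is a coloop, then $H_\bullet(\BC^n,\SA')$ is that of the essentialization of $\SA'$ in $\BC^{n-1}$. It is concentrated in degree $n-1$ and can be nonzero. So the long exact sequence alone no longer forces $H_{n-1}(\BC^n,\SA)=0$; you would need extra information about the connecting map.

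The paper avoids this case entirely:
\begin{itemize}
\item When $m>n$, the linear parts of the hyperplanes cannot all be coloops, since coloops are linearly independent. So one deletes a non-coloop $L$, and $\SA'$ stays essential.
\item $\SA''$ is then automatically essential in $L$.
\item The base case $m=n$ is a central arrangement, whose complex is exact by Lemma \ref{lemmasequence}.
\end{itemize}
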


\begin{remark}
Note that the dimension of the cohomology of the complement is also encoded in the characteristic polynomial (see \cite[Theorem 3.68]{Orlik-Terao}). Indeed, $$\dim(H^n(\BC^n, \SA))=\chi_\SA(0).$$
\end{remark}

\begin{proof} 
We will proceed by induction on the number $m$ of hyperplanes in $\SA.$ The base case is the hyperplane arrangement $\SA_0=\{L_1,\ldots, L_n\} \subseteq \BC^n$ where the intersection $L_1 \cap \cdots \cap L_n$ is a point. In this situation, the arrangement is central, which implies by Lemma \ref{lemmasequence} that
$$H_k(A_\bullet(\SA_0), \partial)=0 \text{ for } k<n.$$

Let us now assume that for every $j \in \BN$ and every essential hyperplane arrangement $\SA \subseteq \BC^j$ with $m'<m$ hyperplanes, we have $$H_{k}(\BC^j, \SA)=H_{k}(A_\bullet(\SA), \partial)=0 \text{ for all } k<j.$$ Let $\SA \subseteq \BC^n$ be an essential hyperplane arrangement with $m>n$ hyperplanes. Let us choose a hyperplane $L$
such that $\SA':=\SA \setminus \{L\}$ and $\SA'':=\SA \cap L,$ the latter viewed as a hyperplane arrangement on $L$, are both essential. Such a choice of $L$ is always possible provided $m>n.$
One obtains the restriction-deletion short exact sequence (see \cite[Theorem 3.65]{Orlik-Terao}):
$$0 \to A_k(\SA') \to A_k(\SA) \to A_{k-1}(\SA'') \to 0.$$
It induces an exact sequence of complexes:
$$0 \to A_\bullet(\SA') \to A_\bullet(\SA) \to A_{\bullet -1}(\SA'') \to 0,$$
where the complexes are those from Definition \ref{defsequence}.
One can verify that every square of this diagram commutes. It induces the following long exact sequence in homology:
\begin{align}
\label{seq1}
    \cdots & \to H_{k}(A_\bullet(\SA''), \partial) \to  H_k(A_\bullet(\SA'), \partial)  \to H_k(A_\bullet(\SA), \partial)  \to \\
	&\to H_{k-1}(A_\bullet(\SA''), \partial) \to H_{k-1}(A_\bullet(\SA'), \partial) \to  \cdots
\end{align}
Both $\SA'$ and $\SA''$ contain strictly fewer than $m$ hyperplanes. By the induction hypothesis, we have
$$H_k(A_\bullet(\SA'), \partial)=0 \text{ for all } k<n \text{ and }H_k(A_\bullet(\SA''), \partial)=0 \text{ for all } k<n-1.$$
Using the exact sequence \eqref{seq1},
we conclude that \begin{align}
\label{eq1}
H^{2n-k}_{< \infty}(\BC^n \setminus \SA) \simeq H_k(\BC^n, \SA) \simeq H_k(A_\bullet(\SA), \partial)=0 \text{ for all } k<n.
\end{align}

Lastly, we prove that $$\dim(H^n_{< \infty}(\BC^n \setminus \SA)) = \dim(H_n(\BC^n, \SA))=(-1)^n\chi_\SA(1).$$
From \cite[Theorem 2.5]{Hypstanley}, the characteristic polynomial satisfies the following recurrence relation: for the essential hyperplane arrangements $\SA'$ and $\SA''$, as defined before, we have
$$(-1)^n\chi_{\SA}(t)=(-1)^n\chi_{\SA'}(t)+(-1)^{n-1}\chi_{\SA''}(t).$$
On the other hand, the vanishing of the cohomology groups \eqref{eq1} reduces the sequence \eqref{seq1} to a short exact sequence, which gives
$$\dim(H_n(\BC^n, \SA))=\dim(H_n(\BC^n, \SA'))+ \dim(H_{n-1}(L, \SA'')).$$
By induction on the number of hyperplanes, it remains to verify the base case: $\SA_0 \subseteq \BC^n$ consists of exactly $n$ hyperplanes meeting in a point. In this case, it is clear that
$$\begin{cases}
	\dim(H_n(\BC^n, \SA_0))=0=(-1)^n\chi_{\SA_0}(1) \text{ if } n>0; \\
	\dim(H_n(\BC^n, \SA_0))=1=(-1)^n\chi_{\SA_0}(1) \text{ if } n=0.
\end{cases}$$
This completes the proof.
\end{proof}

\subsection{Relative homology of a real arrangement}
\label{section2.3}

We now turn to essential affine hyperplane arrangements $\SA$ obtained by complexifying a real arrangement $\SA_\BR \subseteq \BR^n.$ Zaslavsky's theorem \cite[Theorem C]{zaslavsky1997facing} states that the number of bounded regions of the real arrangement $\SA_\BR$ is equal to $(-1)^n\chi_{\SA}(1),$ which with the dimension of $H_n(\BC^n, \SA)$ by Proposition \ref{propchar}. This is not a coincidence: we will show that the bounded regions form a basis of the relative homology. Note that the proof relies on Zaslavsky's theorem.

\begin{prop}
\label{propboundedreg}
If the arrangement $\SA$ is the complexification of a real arrangement $\SA_\BR$ in $\BR^n$, the bounded regions of the real arrangement $\SA_{\BR}$ form a basis of the homology group $H_n(\BC^n, \SA).$
\end{prop}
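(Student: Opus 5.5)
Each bounded region $\Delta$ of $\SA_\BR$ is a compact convex polytope in $\BR^n \subseteq \BC^n$ whose boundary lies in $\SA_\BR \subseteq \SA$. Orienting it with the standard orientation of $\BR^n$, it defines a relative cycle and hence a class $[\Delta] \in H_n(\BC^n, \SA)$. By Proposition \ref{propchar} and Zaslavsky's theorem, the number of bounded regions equals $(-1)^n\chi_\SA(1) = \dim H_n(\BC^n, \SA)$. It therefore suffices to prove either that the classes $[\Delta]$ are linearly independent or that they span. I plan to prove linear independence by pairing with cohomology, and, as an alternative, spanning by a retraction argument.

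\emph{Independence via a pairing.} By Theorem \ref{thmcharacterization}, $H_n(\BC^n,\SA)$ is dual to $H^n(\BC^n,\SA) \simeq \coker(\partial^\vee: A_{n-1}^\vee \to A_n^\vee)$. The Looijenga spectral sequence shows that $E_1^{n,0}=\bigoplus_{P \in \SL_n(\SA)} A_n(\SA_P)^\vee$ surjects onto $H^n(\BC^n,\SA)$. Concretely, each vertex $P$ together with a functional on $A_n(\SA_P)$ gives a relative cohomology class, represented for instance by a bump form supported near $P$ and pulled back from a normal-crossing local model. Pairing $[\Delta]$ with such a class should give a local combinatorial quantity, namely the contribution of the cone of $\Delta$ at $P$. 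The problem therefore reduces to a determinant being nonzero.

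\emph{Spanning via retraction.} I find this route more robust. Choose a large ball $B$ containing all bounded regions. Since $\SA$ is stable under a suitable radial contraction at infinity, $H_n(\BC^n,\SA) \simeq H_n(B, B\cap\SA)$. Induction on the number of hyperplanes using deletion–restriction should work well here. Taking $L$ as in the proof of Proposition \ref{propchar}, there is a long exact sequence of pairs
$$H_n(\BC^n,\SA') \to H_n(\BC^n,\SA) \to H_{n-1}(\SA, \SA') \simeq H_{n-1}(L, \SA''),$$
which is short exact by the vanishing results. Geometrically:
\begin{itemize}
\item Every bounded region of $\SA'_\BR$ is subdivided by $L_\BR$ into bounded regions of $\SA_\BR$, so its class is the sum of the classes of its pieces.
\item The boundary map sends the class of a bounded region of $\SA_\BR$ to the signed sum of its facets lying in $L_\BR$. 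Each facet is a bounded region of $\SA''_\BR$.
\end{itemize}
By induction, bounded regions of $\SA''_\BR$ form a basis of $H_{n-1}(L,\SA'')$, and bounded regions of $\SA'_\BR$ form a basis of $H_n(\BC^n,\SA')$. Each bounded region $F$ of $\SA''_\BR$ is a facet of some bounded region of $\SA_\BR$ (one of the two regions adjacent to $F$ is bounded, because $F$ is bounded and essentialness can be arranged), so the boundary image of that region is $\pm[F]$. Consequently the span of the $[\Delta]$ surjects onto $H_{n-1}(L,\SA'')$ and contains the image of $H_n(\BC^n,\SA')$, hence it is everything. Combined with the dimension count, this gives a basis.

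The main obstacle is the claim that a bounded facet $F \subseteq L_\BR$ lies in a bounded region of $\SA_\BR$. In fact both adjacent regions could be unbounded only if $F$ were not bounded in $\SA'$-terms, and I would settle this by a direct convexity argument. If it fails, I would instead show that the boundary map is onto by a triangular argument with respect to a generic linear order.
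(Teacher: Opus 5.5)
Your second route is correct, and it differs genuinely from the paper's, even though both rest on the same deletion--restriction sequence and both finish with Zaslavsky's theorem and Proposition~\ref{propchar}.

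\textbf{What the paper does.} It proves that $\BQ\SB \to H_n(\BC^n,\SA)$ is \emph{injective}. It imports from \cite{brown2025positive} combinatorial maps $f\colon \BQ\SB'\to\BQ\SB$ and $g\colon\BQ\SB\to\BQ\SB''$ with $f$ injective and $\im f=\ker g$. It then chases the diagram, using injectivity of $H_n(\BC^n,\SA')\to H_n(\BC^n,\SA)$.

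\textbf{What you do.} You prove \emph{spanning} instead. This needs only three things: exactness at $H_n(\BC^n,\SA)$; the subdivision identity $[R']=[R_+]+[R_-]$; and the fact that every bounded region $F$ of $\SA''_\BR$ is a facet of a bounded region of $\SA_\BR$ (essentially surjectivity of $g$).

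\textbf{The step you flagged.} Your justification of that last fact is vague, but it holds precisely because $\SA'$ is essential. Let $R'$ be the region of $\SA'_\BR$ with $R'\cap L_\BR=F$, write $L=\{\ell=0\}$ with $\ell_0$ the linear part of $\ell$, and let $C$ be the recession cone of $\overline{R'}$.
\begin{itemize}
\item $C\cap\ker\ell_0$ is the recession cone of the bounded set $\overline{F}=\overline{R'}\cap L_\BR$, hence it is zero.
\item So if both halves $R'\cap\{\pm\ell>0\}$ were unbounded, there would be $d,d'\in C$ with $\ell_0(d)>0>\ell_0(d')$.
\item Then $-\ell_0(d')\,d+\ell_0(d)\,d'\in C\cap\ker\ell_0=0$, which puts a line in $C$. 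This contradicts the fact that regions of an essential arrangement have trivial lineality space.
\end{itemize}
The bounded half then has $\overline{F}$ as its unique facet on $L_\BR$, so its boundary image is $\pm[F]$, as you say.

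\textbf{What each route buys.} Your version is self-contained and elementary. More interestingly, combining your half with the paper's makes the bottom row $0\to\BQ\SB'\to\BQ\SB\to\BQ\SB''\to0$ short exact. The five lemma then gives the isomorphism directly, and the appeal to Zaslavsky's theorem could be dropped; the bounded-region count would follow from Proposition~\ref{propchar}.

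The pairing route, which stops before any determinant is computed, and the radial-retraction remark are not needed.
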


\begin{remark}
Proposition \ref{propboundedreg} can also be derived from \cite[Theorem B]{kupers}, which uses the language of Tits buildings.
\end{remark}

\begin{proof}
We follow the approach outlined in \cite[Proposition 6.2]{brown2025positive}.
We proceed by induction on the number $m$ of hyperplanes in $\SA$. The base case occurs when $\SA_0 \subseteq \BC^n$ is an arrangement consisting of exactly $n$ hyperplanes meeting in a point. In this situation, the arrangement is central. Hence, if $n>0,$ then $H_n(\BC^n, \SA_0)=0$ by Lemma \ref{lemmasequence} and Theorem \ref{theoremres} and the real arrangement $(\SA_0)_{\BR}$ has no bounded regions. If $n=0,$ then $H_n(\BC^n, \SA_0)$ is generated by a point which is the unique bounded region of the real arrangement.

Let us now assume that for every $j \in \BN$ and every essential hyperplane arrangement $\SA \subseteq \BC^j$ with $m'<m$ hyperplanes, the bounded regions of $\SA_{\BR}$ form a basis of the homology group $H_j(\BC^j, \SA).$ Consider an essential hyperplane arrangement $\SA \subseteq \BC^n$ with $m$ hyperplanes. As in the proof of Proposition \ref{propchar}, we choose a hyperplane $L \in \SA$ such that $\SA':= \SA \setminus \{L\} \subseteq \BC^n$ and $\SA'':= \SA \cap L \subseteq L$ are both essential. As noted earlier, if $\SA$ consists of $m>n$ hyperplanes, such a choice of $L$ is always possible. Both $\SA'$ and $\SA''$ contain strictly fewer than $m$ hyperplanes. Let $\SB, \SB'$ and $\SB''$ denote the set of bounded regions of $\SA_\BR, \SA'_\BR$ and $\SA''_\BR,$ respectively. We then consider the following diagram:
\[\begin{tikzcd}
	0 & {H_n(\BC^n, \SA')} & {H_n(\BC^n, \SA)} & {H_{n-1}(L, \SA'')} & 0 \\
	0 & {\BQ \SB'} & {\BQ \SB} & {\BQ \SB''} & 0
	\arrow[from=1-1, to=1-2]
	\arrow[from=1-2, to=1-3]
	\arrow[from=1-3, to=1-4]
	\arrow[from=1-4, to=1-5]
	\arrow[from=2-1, to=2-2]
	\arrow[from=2-2, to=1-2]
	\arrow["f",from=2-2, to=2-3]
	\arrow[from=2-3, to=1-3]
	\arrow["g",from=2-3, to=2-4]
	\arrow[from=2-4, to=1-4]
	\arrow[from=2-4, to=2-5]
\end{tikzcd}\]

The first row is the long exact sequence in relative cohomology; by Proposition \ref{propchar},
it reduces to a short exact sequence.
The vertical maps send a bounded region to its class in homology. We define the maps $f$ and $g$ exactly as in \cite[Propostion 6.2]{brown2025positive}. As in loc.~cit., these definitions make both square commutes, and one verifies that $f$ is injective and $\im(f)= \ker(g )$.

Since the leftmost and rightmost vertical maps are isomorphisms, it follows that the middle vertical map $\BQ \SB \to H_n (\BC^n, \SA)$ is injective. By Zaslavsky's theorem (\cite[Theorem C]{zaslavsky1997facing}), the number of bounded regions is given by $(-1)^n\chi_\SA(1)$ which coincides, by Proposition \ref{propchar}, with the dimension $\dim(H_n(\BC^n, \SA))$. A dimension count shows that the bounded regions of $\SA_\BR$ form a basis of $H_n(\BC^n, \SA).$
\end{proof}

\section{Residues at infinity}
\label{Section3}

\subsection{Cohomology at finite distance in terms of a compactification}
\label{section3.1}

In this section, we express the cohomology at finite distance $H^n_{< \infty}(\BC^n \setminus \SA)$ in terms of a compactification. This reformulation is necessary to make the notion of residue at infinity precise.

\begin{definition}
\label{locallyaproduct}
Let $X$ be a smooth compactification of $\BC^n$, $\OSA$ be the closure of $\SA$ in $X$ and $Y:=X \setminus \BC^n$ be the divisor at infinity. We say that $X$ is locally a product for $\SA$ if for each point $P$ of $\OSA \cap Y,$ there exist an open $U \subseteq X$ around $P,$ a decomposition $U=Z_1 \times Z_2,$ and two closed $A \subseteq Z_1$, and $B \subseteq Z_2$ such that $\OSA \cap U=A \times Z_2$ and $Y \cap U=Z_1 \times B.$
\end{definition}

\begin{remark}
Note that in the above definition, $X$ is a complex algebraic variety and the notions of openness and closedness refer to the analytic topology.
\end{remark}

The proof of the following lemma relies on the assumption that $X$ is locally a product for $\SA.$ This condition is necessary in order to apply \cite[Proposition 3.2]{Single-valued} and is weaker than the normal crossing condition at infinity.

\begin{lemma}
\label{lemmainclusion}
Let $X$ be a compactification of $\BC^n$ that is locally a product for $\SA,$ and let $Y$ and $\OSA$ be as defined above.
Then, the following diagram commutes:
\begin{equation}
\label{eq2}
\begin{tikzcd}[row sep=huge,column sep=large]
	{H^n_{< \infty}(\BC^n \setminus \SA)} & {H^n(\BC^n \setminus \SA)} \\
	{H^n(X \setminus \OSA, Y \setminus (Y \cap \OSA))} & {H^n(X \setminus (Y \cup \OSA))}
	\arrow[from=1-1, to=1-2]
	\arrow["\simeq"', from=1-1, to=2-1]
	\arrow["\simeq"', from=1-2, to=2-2]
	\arrow[from=2-1, to=2-2]
\end{tikzcd}
\end{equation}
\end{lemma}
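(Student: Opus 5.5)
We work entirely with sheaves on $X$. Write $U:=X\setminus(Y\cup\OSA)=\BC^n\setminus\SA$, with open immersions $u:U\hookrightarrow \BC^n$ (this is the map $j$ of the definition), $v:\BC^n\hookrightarrow X$, $\alpha:U\hookrightarrow X\setminus \OSA$ and $\beta:X\setminus\OSA\hookrightarrow X$. Note that $v\circ u=\beta\circ\alpha$.

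Since $X$ is compact, the definition gives
$$H^n_{<\infty}(\BC^n\setminus\SA)=H^n_c(\BC^n,Ru_*\BQ_U)=H^n(X,v_!Ru_*\BQ_U).$$
On the other side, $\alpha$ is the complement of the divisor $Y\setminus(Y\cap\OSA)$ in $X\setminus\OSA$, so
$$H^n(X\setminus\OSA,Y\setminus(Y\cap\OSA))=H^n(X\setminus\OSA,\alpha_!\BQ_U)=H^n(X,R\beta_*\alpha_!\BQ_U).$$
The left vertical isomorphism of \eqref{eq2} will therefore follow from a natural isomorphism of complexes on $X$,
$$v_!Ru_*\BQ_U\;\xrightarrow{\ \sim\ }\;R\beta_*\alpha_!\BQ_U .$$

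To construct this map, use the base-change morphism $v_!Ru_*\to R\beta_*\alpha_!$ attached to the square formed by $U$, $\BC^n$, $X\setminus\OSA$ and $X$. Both sides restrict to $Ru_*\BQ_U$ on $\BC^n$. Both sides also vanish at points of $Y\setminus\OSA$: the left side because of $v_!$, the right side because $R\beta_*$ is the identity near such a point and $\alpha_!$ vanishes on $Y$.

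The only nontrivial check is at points $P\in\OSA\cap Y$. Here the locally-a-product hypothesis is used exactly as in \cite[Proposition 3.2]{Single-valued}. Choose a neighbourhood $Z_1\times Z_2$ of $P$ with $\OSA=A\times Z_2$ and $Y=Z_1\times B$. On this neighbourhood the open immersions factor as products:
\begin{itemize}
\item $u$ and $\beta$ become $(Z_1\setminus A)\times(\,\cdot\,)\hookrightarrow Z_1\times(\,\cdot\,)$;
\item $v$ and $\alpha$ become $(\,\cdot\,)\times(Z_2\setminus B)\hookrightarrow(\,\cdot\,)\times Z_2$.
\end{itemize}
By the Künneth formula, both sides are isomorphic to $Rk_*\BQ\boxtimes l_!\BQ$, where $k:Z_1\setminus A\hookrightarrow Z_1$ and $l:Z_2\setminus B\hookrightarrow Z_2$. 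Under this identification the base-change map is the identity, so it is an isomorphism on stalks.

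This local computation is the step I expect to be the main obstacle. One has to verify that the base-change morphism is really compatible with the Künneth identifications, i.e. that pushforward along a product of immersions commutes with external tensor product. That is precisely the content of the cited proposition.

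For commutativity, the top horizontal map is induced by the adjunction $v_!Ru_*\BQ_U\to Rv_*Ru_*\BQ_U=R(vu)_*\BQ_U$. The bottom map is induced by $R\beta_*\alpha_!\BQ_U\to R\beta_*R\alpha_*\BQ_U=R(\beta\alpha)_*\BQ_U$, the natural map from the relative cohomology to the cohomology of $U$. The right vertical isomorphism is simply the identification $X\setminus(Y\cup\OSA)=\BC^n\setminus\SA$. Both composites are the canonical "forget supports" maps into $R(vu)_*\BQ_U$, and the isomorphism above is compatible with them by construction, since it is natural and is the identity over $U$. Hence \eqref{eq2} commutes.
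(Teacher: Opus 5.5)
Your proposal is correct and takes the same route as the paper. The paper's proof is a one-line appeal to the six-functor formalism together with \cite[Proposition 3.2]{Single-valued}; you spell this out via the exchange morphism $v_!Ru_*\BQ_U\to R\beta_*\alpha_!\BQ_U$, whose invertibility at points of $\OSA\cap Y$ is exactly that cited local-product statement. Your commutativity argument, that maps into $R(vu)_*\BQ_U$ are determined by their restriction to $U$, is the standard adjunction bookkeeping the paper leaves implicit.
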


\begin{proof}
The diagram \eqref{eq2} commutes by standard properties of the six-functor formalism. The only subtle point is the use of \cite[Proposition 3.2]{Single-valued} and \cite[Proposition 1.7.5]{Ayoub}.
\end{proof}

As we observed in Remark \ref{remark}, we moreover have $H^n_{< \infty}(\BC^n \setminus \SA) \subseteq H^n(\BC^n \setminus \SA).$ This also implies that $H^n(X \setminus \OSA, Y \setminus (Y \cap \OSA)) \to H^n(X \setminus (Y \cup \OSA))$ is an inclusion.
Moreover, this latter map factors through $H^n(X \setminus \OSA),$ i.e., we have
\begin{equation}
	\label{adjunction}
	H^n(X \setminus \OSA, Y \setminus (Y \cap \OSA)) \to H^n(X \setminus \OSA) \to H^n(X \setminus (Y \cup \OSA)).
\end{equation}

\subsection{Vanishing of residue at infinity}
\label{section3.11}
In this section, we show that the cohomology at finite distance $H^n_{< \infty}(\BC^n \setminus \SA)$ is isomorphic to the joint kernel of the residue morphisms along the irreducible components of $Y$.

We consider the following setting: let $X$ be a smooth compactification of $\BC^n$ that is locally a product for $\SA,$ $Y:=X \setminus \BC^n$ be the divisor at infinity and $\OSA$ be the closure of $\SA$ in $X$. Moreover, let $(Y_i)_{i \in I}$ denote the irreducible components of $Y,$ and set
$$Y_i^\circ= Y_i \setminus \left( \bigcup_{i \ne j \in I} Y_i \cap (Y_j \cup \cap \OSA) \right).$$

\begin{theorem}
\label{theoremres}
Under the additional assumption that $Y \setminus (Y \cap \OSA)$ is a normal crossing divisor, we have the following isomorphism:
$$H^n(X \setminus \OSA, Y \setminus (\OSA \cup Y)) \simeq \ker \left( \bigoplus_{i \in I}\Res_{Y_i^\circ}\right),$$
where $\Res_{Y_i^\circ}: H^n(X \setminus (\OSA \cup Y)) \to H^{n-1}(Y_i^\circ)$ is the residue morphism.
\end{theorem}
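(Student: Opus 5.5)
Set $U := X \setminus \OSA$ and $D := Y \setminus (Y \cap \OSA) = Y \cap U$. By hypothesis $D$ is a normal crossing divisor in the smooth variety $U$, with irreducible components $D_i = Y_i \cap U$. We have $U \setminus D = X \setminus (\OSA \cup Y) \simeq \BC^n \setminus \SA$, and the strata $Y_i^\circ$ are the open parts of the $D_i$ away from the other components. The plan is to combine the long exact sequence of the pair $(U, D)$ with the Gysin/residue sequence along $D$, and then to use the injectivity $H^n(U,D) \hookrightarrow H^n(U \setminus D)$. That injectivity follows from Theorem \ref{thmcharacterization} together with Lemma \ref{lemmainclusion}, and was noted after \eqref{adjunction}.

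\textbf{Step 1 (reduction to a kernel statement).} The map $H^n(U,D) \to H^n(U\setminus D)$ factors as $H^n(U,D) \to H^n(U) \to H^n(U\setminus D)$, as in \eqref{adjunction}, and it is injective. So it suffices to identify its image with $\ker\left(\bigoplus_i \Res_{Y_i^\circ}\right)$.

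For the inclusion of the image into the kernel, observe that the composite $H^n(U) \to H^n(U \setminus D) \xrightarrow{\Res} H^{n-1}(Y_i^\circ)$ vanishes. Indeed, the residue is the connecting map of the Gysin/localization sequence for $D_i^\circ$ inside $U$ minus the other components, and a class extending across $D_i$ has zero residue.

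\textbf{Step 2 (the reverse inclusion, via weights).} I would use the mixed Hodge structure, or equivalently the Leray spectral sequence for $U \setminus D \hookrightarrow U$, which for a normal crossing divisor degenerates at $E_2$ (Deligne). Its weight filtration satisfies:
\begin{itemize}
\item $W_n H^n(U \setminus D)$ is the image of $H^n(U)$ in the logarithmic complex picture;
\item $\mathrm{Gr}^W_{n+1} H^n(U \setminus D)$ injects into $\bigoplus_i H^{n-1}(\widetilde{D_i})$ via residues.
\end{itemize}
Replacing $\widetilde{D_i}$ by $Y_i^\circ$ is harmless for the kernel computation, since $\Res_{Y_i^\circ}$ factors the weight-graded residue through restriction.

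However, here $U$ is not compact, so the cohomology of $U$ itself has weights. The cleaner route is to avoid Hodge theory and directly use the exact sequence
$$H^n(U) \to H^n(U\setminus D) \to H^{n+1}_D(U),$$
and then compute $H^{n+1}_D(U)$ by the Mayer–Vietoris/Gysin spectral sequence for the normal crossing divisor $D$. This gives an $E_1$ term $\bigoplus_i H^{n-1}(D_i)(-1)$, a next term indexed by double intersections, and so on.

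\textbf{Step 3 (the main obstacle).} The key point is to show that a class in $H^n(\BC^n \setminus \SA)$ with vanishing residues on all the $Y_i^\circ$ actually comes from $H^n(U,D)$, not merely from $H^n(U)$. Two things must be handled:
\begin{itemize}
\item the higher-codimension strata of $D$ (double points and beyond) could obstruct extension;
\item one must pass from $H^n(U)$ to $H^n(U,D)$.
\end{itemize}
I would attack both with a degree argument. Under the locally-product assumption, Lemma \ref{lemmainclusion} identifies $H^n(U,D)$ with $H^n_{<\infty}$, and the relative sequence is
$$H^{n-1}(D) \to H^n(U,D) \to H^n(U) \to H^n(D).$$
Here $D$ is a (possibly singular) variety of dimension $n-1$, and it is moreover Stein-like: it is covered by pieces of complements of affine arrangements in $Y_i$. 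Artin vanishing therefore gives $H^n(D) = 0$, so $H^n(U,D) \to H^n(U)$ is surjective.

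For the double-intersection terms in $H^{n+1}_D(U)$, the contributions are $H^{n-3}$ of strata of dimension $n-2$, which land in the next differential. Vanishing of the residues on the open parts $Y_i^\circ$ kills the $E_1^{\text{codim }1}$ term. The remaining codimension-$\ge 2$ terms only affect the image of the residue map, not which classes lie in its kernel. This is because the residue map restricted to $Y_i^\circ$ equals the composite of the full residue to $H^{n-1}(D_i \setminus \text{others})$, and the long exact Gysin sequence on $U \setminus (\text{codim} \geq 2 \text{ strata})$ shows extension across those strata is automatic by purity, since they have complex codimension $\geq 2$ in $U$.

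I expect this last purity/extension step, together with justifying $H^n(D)=0$, to be where most care is needed.
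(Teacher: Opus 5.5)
Your Step 1 is fine. The two points you flag in Step 3 are not merely delicate, though: as stated, both are false. The ingredient that repairs them is exactly the weight argument you abandoned in Step 2, and it is what the paper uses.

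First, $H^n(D)=0$ does not hold in general. Being covered by pieces of affine arrangement complements does not make $D=Y\setminus(Y\cap\OSA)$ affine, and components of $Y$ can miss $\OSA$ entirely. Take the running example: five lines in $\BC^2$, with $P_{12},P_{34},P_5$ blown up. There the strict transform $Y_\infty$ of $L_\infty$ meets no $\overline{L}_i$, so $D\supseteq Y_\infty\simeq\BP^1$ and $H^2(D)\neq 0$. Moreover, $H^2(U,D)\to H^2(U)$ is genuinely not surjective there. The class of $E_{P_{12}}$, restricted to $U$, restricts to $Y_\infty$ with degree $E_{P_{12}}\cdot Y_\infty=1$, so its image in $H^2(D)$ is nonzero.

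What is true is that every $H^k(D)$ has weights $\le 2(n-1)$, since $\dim D=n-1$. By exactness of $\Gr^W$, this gives $\Gr^W_{2n}H^n(U,D)\simeq\Gr^W_{2n}H^n(U)$. Since $H^n(U\setminus D)$ is pure of weight $2n$, the map $H^n(U)\to H^n(U\setminus D)$ kills $W_{2n-1}$. Hence the images of $H^n(U)$ and of $H^n(U,D)$ in $H^n(U\setminus D)$ coincide. That is the correct replacement for your surjectivity claim. It is the paper's first step, combined with purity of $H^n(U,D)$, which follows from its embedding into $H^n(\BC^n\setminus\SA)$.

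Second, extension across the codimension-$\ge 2$ strata is not automatic by purity. Let $Z$ be the union of the double intersections $D_i\cap D_j$. The obstruction to extending a class from $U\setminus Z$ to $U$ lies in $H^{n+1}_Z(U)$, and purity only gives $H^k_Z(U)=0$ for $k<2\codim Z=4$. So your argument works for $n\le 2$ but not beyond: for a smooth curve $Z$ in a threefold, $H^4_Z(U)\simeq H^0(Z)(-2)\neq 0$.

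In terms of the residue spectral sequence $E_1^{p,q}=\bigoplus_{|J|=p}H^{q-p}(Y_J^\circ)(-p)\Rightarrow H^{p+q}(U)$, vanishing residues means $d_1\omega=0$. Nothing in your argument controls $d_r\omega$ for $r\ge 2$, and your claim that the deeper strata ``only affect the image'' is precisely this unproved statement.

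The paper kills these differentials by weights. $E_r^{0,n}$ is a subspace of $H^n(U\setminus D)\simeq H^n(\BC^n\setminus\SA)$, which is pure of weight $2n$. The target of $d_r$ is a subquotient of $\bigoplus_{|J|=r}H^{n-2r+1}(Y_J^\circ)(-r)$, whose weights are at most $2n-2r+2<2n$ for $r\ge 2$. So $d_r=0$ by strictness, and $\Gr^W_{2n}H^n(U)\simeq\ker(\bigoplus_i\Res_{Y_i^\circ})$.

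Non-compactness of $U$ is no obstacle to this. You never need $W_n$ of $H^n(U\setminus D)$, only the top graded piece $\Gr^W_{2n}$ and strictness, and both behave well for any smooth $U$.
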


\begin{remark}
Such compactifications of $\BC^n$ exist, and the following section will provide examples.
\end{remark}

Some proofs of this section and the following section rely on the theory of mixed Hodge structures introduced by Deligne in \cite{Hodge1,Hodge2,Hodge3} and mixed Hodge modules introduced by Saito in \cite{saito1990mixed}. We provide here a brief overview of the required properties and refer to these articles for further details and precise definitions.

The cohomology and homology of a variety, as well as the cohomology and homology of a pair of varieties, are endowed with a mixed Hodge structure (see \cite{Hodge1}, \cite{Hodge2}, and \cite{Hodge3}). A crucial property in this theory is that morphisms arising from geometry are morphisms of mixed Hodge structures, meaning that they are strictly compatible with both filtrations.

Here, we consider slightly more subtle cohomology groups. We require Saito's formalism of mixed Hodge modules as introduced in \cite{saito1990mixed}. This framework ensures that every object constructed via the six-functor formalism carries a mixed Hodge structure, generalising Deligne's construction.

In our setting, Saito's formalism implies that $H^n_{< \infty}(\BC^n \setminus \SA)$ is endowed with a mixed Hodge structure. Moreover, $H^n(\BC^n, \SA)$ carries a mixed Hodge structure arising from the filtrations. The inclusion $ H^n_{< \infty}(\BC^n \setminus \SA) \subseteq H^n(\BC^n \setminus \SA)$ is an inclusion of mixed Hodge structures.

\begin{proof}[Proof of Theorem \ref{theoremres}]
The long exact sequence in relative cohomology given by:
$$H^{n-1}(Y \setminus (\OSA \cup Y)) \to H^n(X \setminus \OSA, Y \setminus (\OSA \cup Y)) \to H^n(X \setminus \OSA) \to H^{n}(Y \setminus (\OSA \cup Y))$$ 
induces an isomorphism:
$$\Gr^W_{2n}H^n(X \setminus \OSA, Y \setminus (\OSA \cup Y)) \simeq \Gr^W_{2n}H^n(X \setminus \OSA).$$
Indeed, since $\dim(Y \setminus (Y \cup \OSA))=\dim(Y)=n-1,$ the cohomology groups $H^k(Y \setminus (Y \cap \OSA))$ do not have weight $2n$ for any $k$, i.e.,
$$\Gr^W_{2n} H^k(Y \setminus (Y \cup \OSA))=0 \text{ for all } k.$$
As we have seen in Remark \ref{remark}, we have the following inclusion $H^n_{< \infty}(\BC^n \setminus \SA) \hookrightarrow H^n(\BC^n \setminus \SA)$. The latter cohomology group is pure of weight $2n$ and type $(n,n)$ by \cite{brieskorn2006groupes}. This implies that $H^n_{< \infty}(\BC^n \setminus \SA)$ is pure of weight $2n$ and type $(n,n)$, and so is $H^n(X \setminus \OSA, Y \setminus (Y \cup \OSA))$ by Lemma \ref{lemmainclusion}. Therefore, we have:
$$H^n(X \setminus \OSA, Y \setminus (\OSA \cup Y)) = \Gr^W_{2n}H^n(X \setminus \OSA, Y \setminus (\OSA \cup Y)) \simeq \Gr^W_{2n}H^n(X \setminus \OSA).$$
For $J=\{i_1, \ldots, i_p\} \subseteq I,$ let $Y_J:=Y_{i_1} \cap \cdots \cap Y_{i_p}$ by $Y_J$ and set $$Y_J^\circ:=Y_J \setminus \left( \bigcup_{i \in I \setminus J}Y_J \cap \left(Y_i \cap \OSA\right)\right).$$ We consider the residue spectral sequence:
$$E_1^{p,q}=\bigoplus_{|J|=p}H^{-p+q}(Y_J^\circ)(-p) \implies H^{p+q}(X \setminus \OSA).$$
Taking the $\Gr^W_{2n}$-part of this spectral sequence yields a new spectral sequence whose $E_1$-page has only one row of non-vanishing terms (again because of the dimension of $Y_J^\circ$):
$$H^n(X \setminus (\OSA \cup Y)) {\xrightarrow{\bigoplus_{i \in I} \Res_{Y_i^\circ}}} \Gr^{W}_{2n}\left[\bigoplus_{i \in I} H^{n-1}(Y^\circ_i)(-1)\right] \to \Gr^{W}_{2n}\left[\bigoplus_{i \ne j \in I}H^{n-2}(Y^\circ_{ij})(-2)\right] \to \cdots.$$
It degenerates at the $E_1$-page. Therefore, we conclude that
\begin{equation*}
H^n(X \setminus \OSA, Y \setminus (\OSA \cup Y)) \simeq \Gr^{W}_{2n}H^n(X \setminus \OSA) \simeq \ker\left( \bigoplus_{i \in I}\Res_{Y_i^\circ}\right).
\qedhere
\end{equation*}
\end{proof}

\subsection{Partial wonderful compactifications at infinity}
\label{section2.2}

Theorems \ref{thmcharacterization} and \ref{theoremres} give two distinct descriptions of the cohomology at finite distance, each as the kernel of a map with source $H^n(\BC^n \setminus \SA).$
To relate the two descriptions, we make use of a specific compactification of $\BC^n$, referred to as partial wonderful compactifications at infinity.

In this section, we construct the compactification $X$ as a partial wonderful compactification. Wonderful compactifications are smooth compactifications of $\BC^n \setminus \SA$ for which the divisor $Y \cup \OSA$ is normal crossing. They were first introduced by de Concini and Procesi in \cite{de1995wonderful}. In contrast with the initial definition, we leave $\BC^n$ unchanged and only modify the divisor at infinity. The resulting compactification, which compactifies $\BC^n,$ however, is not a normal crossing compactification but locally a product for $\SA.$ We adapt the construction accordingly.
We follow the formulation of \cite[Section 3]{dupont2017relative} and refer to that work for further details and precise definitions. We work throughout in the setting of hypersurface arrangements. A hypersurface arrangement $\SB$ in a variety $X$ is a collection of hypersurfaces that is locally a hyperplane arrangement.

\begin{definition}
Let $\SA \subseteq \BC^n$ be a hyperplane arrangement. The dual of a stratum $S$ is the space of linear forms on $\BC^n$ that vanish on $S$. We denote it by $S^\perp.$ 

We say that the strata $S_1,\ldots, S_r$ intersect transversally if $S_1^\perp,\ldots, S_r^\perp$ are in direct sum in $\BC^n.$ We write $S_1 \pitchfork \cdots \pitchfork S_r.$

A decomposition of a stratum $S$ is an equality $S=S_1 \pitchfork \cdots \pitchfork S_r$ where $S_1, \ldots, S_r$ are strata of $\SA$ such that for every hyperplane $L:=\{f=0\}$ containing $S,$ then $L$ contains one of the $S_i$ for some $1 \leq i \leq r$. Equivalently, we say that $S=S_1 \pitchfork \cdots \pitchfork S_r$ is a decomposition of $S$ if $S=S_1^\perp \oplus \cdots \oplus S_r^\perp$ and for each equation $f$ defining a hyperplane, then $f \in S_i^\perp$ for some $1 \leq i \leq r$.

We say that $S$ is reducible if it admits a non-trivial decomposition, and irreducible if it does not.
\end{definition}

These definitions carry over to hypersurface arrangements. In this context, we require that the local hyperplane arrangement around each point satisfies the conditions.

For each stratum $S$ of $\SB$, there exists a unique decomposition of $S=S_1^\perp \oplus \cdots \oplus S_r^\perp$ into irreducible strata $S_1, \ldots, S_r$. Moreover, such a decomposition induces a product decomposition of $\SB_S \subseteq \BC^n$, the local arrangement around $S$: $\SB_S\simeq \SB_{S_1} \times \cdots \times \SB_{S_r}$ for $\SB_{S_1}, \ldots, \SB_{S_r}$ the local arrangements around $S_1, \ldots, S_r.$

In this section, we consider the following setting: we compactify $\BC^n$ to $\BP_\BC^n$ by adding a hyperplane at infinity $L_\infty$ and let $\tilde{\SA}$ denote the closure of $\SA$ in $\BP_\BC^n.$ Let $\SY=\{L_\infty\}$ denote the arrangement consisting of the single hyperplane $L_\infty.$ Moreover, let $\SG_0:=\{P_1, \ldots, P_r\}$ denote the set of all zero-dimensional strata of $\tilde{\SA} \cap L_\infty$ and let $\SG_{>0}^{\operatorname{irr}}$ be the set of irreducible strata $S$ of $\OSA \cup L_\infty$ that are fully contained in $L_\infty$ and satisfy $\dim(S) > 0.$
Alternatively, one can describe the set $\SG_0$ as the set of directions of the affine arrangement $\SA.$ Indeed, if $P \in L_\infty$ is a point at infinity, all hyperplanes $\tilde{L}$ passing through $P$ have the same direction $v.$ This notion of directions only depends on the affine arrangement $\SA.$

The idea of this lemma is to blow-up all
elements of $\SG \setminus \{L_\infty\}$ where $\SG:=\SG_0 \cup \SG_{>0}^{\operatorname{irr}}.$
At the end of the process, we obtain a variety $X$ which is locally a product for $\SA$. Moreover, for each irreducible stratum $S \in \SG_{>0}^{\operatorname{irr}}$ and each point $P \in \SG_0,$
we obtain an irreducible component of $Y:=X \setminus \BC^n$, denoted by $Y_S$ and $Y_P,$ respectively. These components are the strict transforms of the exceptional divisors arising from the blow-ups at $S$ and $P,$ respectively. Conversely, each irreducible component of $Y$ arises this way. We denote by $Y_\infty$ the component $Y_{L_\infty}$, which is the strict transform of $L_\infty.$

\begin{lemma}
\label{lemcomp}
We inductively define a sequence of complex manifolds $X^{(k)}$ and of two hypersurface arrangements $\tilde{\SA}^{(k)}$ and $\SY^{(k)}$ inside $X^{(k)}$ and of sets $\SG^{(k)}$ of strata of $\SA^{(k)} \cup \SY^{(k)}$ fully contained in $\SY^{(k)}$, via the following process:
\begin{enumerate}
	\item We start by setting $$X^{(0)}:=\BP_\BC^n, \ \tilde{\SA}^{(0)}:=\tilde{\SA}, \ \SY^{(0)}:=\SY, \text{ and } \SG^{(0)}:=\SG.$$
	\item Assume that, the variety $X^{(k)}$, the hypersurface arrangements $\tilde{\SA}^{(k)}$ and $\SY^{(k)}$ and the set $\SG^{(k)}$ are constructed and let $Z^{(k)}$ be a minimal element of $\SG^{(k)}$ of codimension greater than 2 in $X^{(k)}$. We define
	$$X^{(k+1)} \overset{\pi}{\to} X^{(k)}$$
	to be the blow-up of $X^{(k)}$ along $Z^{(k)}$. Moreover, we define
	$$\tilde{\SA}^{(k+1)}:=\{\tilde{L} \text{ the strict transform of } L \in \tilde{\SA}^{(k)}\},$$ for $E_{Z^{(k)}}$ the exceptional divisor of the blow-up along $Z^{(k)}$, $$\SY^{(k+1)}: =\{E_{Z^{(k)}}\} \cup \{\tilde{L} \text{ the strict transform of } L \in \SY^{(k)}\},$$ and $$\SG^{(k+1)}:=\{E_{Z^{(k)}}\} \cup \{\tilde{S} \text{ the strict transform of } S \in \SG^{(k)} \setminus \{Z^{(k)}\}\}.$$
\end{enumerate}
Then, after a finite number of steps, we get a variety $X:=X^{(\infty)}$ which is locally a product for $\SA$, together with two hypersurface arrangements, $\OSA:=\tilde{\SA}^{(\infty)}$ and $\overline{\SY}=\SY^{(\infty)},$ the latter being normal crossing. We also obtain a set $\overline{\SG}:=\SG^{(\infty)}$ of codimension-one strata of $\tilde{\SY}.$
\end{lemma}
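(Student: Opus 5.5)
The plan is to follow the de Concini--Procesi / Dupont inductive scheme for wonderful models, applied to the hypersurface arrangement $\tilde{\SA}\cup\SY$, with the twist that only strata contained in $L_\infty$ are blown up. The argument has three parts, carried out in order: termination, normal crossing of $\overline{\SY}$, and the local product property.

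\textbf{Termination and bookkeeping.} Each step blows up a minimal element $Z^{(k)}$ of $\SG^{(k)}$ of codimension at least $2$. That element is replaced by a divisor $E_{Z^{(k)}}$, so the number of elements of $\SG^{(k)}$ of codimension $\geq 2$ drops by one. We need the strict transforms of the remaining elements to stay strata of the same kind (contained in $\SY^{(k+1)}$) and of codimension at least $2$. The process therefore stops after $|\SG|-1$ steps, since $L_\infty$ is never blown up.

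Following \cite[Section 3]{dupont2017relative}, I will prove by induction on $k$ the following invariants:
\begin{enumerate}
\item $\SG^{(k)}$ is a \emph{building set} for the arrangement $\tilde{\SA}^{(k)}\cup\SY^{(k)}$ restricted to strata contained in $\SY^{(k)}$. That is, for every stratum $S\subseteq \SY^{(k)}$, the minimal elements of $\SG^{(k)}$ containing $S$ intersect transversally with intersection $S$.
\item Every irreducible stratum contained in $\SY^{(k)}$ lies in $\SG^{(k)}$.
\end{enumerate}
At $k=0$, invariant (2) holds by definition of $\SG=\SG_0\cup\SG^{\operatorname{irr}}_{>0}$: every zero-dimensional stratum in $L_\infty$ is included, and the positive-dimensional irreducible ones are included. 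Invariant (1) then follows from the unique decomposition of a stratum into irreducible strata recalled before the lemma. Preservation under blow-up along a minimal element is the standard local computation: blowing up a minimal $Z$ of a building set produces a building set, and locally around $Z$ the arrangement is a product $\SB_{Z}\times(\text{rest})$.

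\textbf{Normal crossing of $\overline{\SY}$.} At the end, every element of $\overline{\SG}$ is a divisor. The building-set property then says that, at any point of $\overline{\SY}$, the components of $\overline{\SY}$ through it meet transversally. The components of $\overline{\SY}$ are exactly the strict transform of $L_\infty$ and the exceptional divisors, and they are smooth, so $\overline{\SY}$ is normal crossing.

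\textbf{Locally a product.} Take $P\in\OSA\cap Y$ and let $S$ be the smallest stratum of $\OSA\cup\overline{\SY}$ containing $P$. Its decomposition into irreducible strata splits the local arrangement as a product $\SB_{S_1}\times\cdots\times\SB_{S_r}$.

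Consider an irreducible factor $S_i$ whose local arrangement contains both a component of $\overline{\SY}$ and a hyperplane of $\OSA$. Such an $S_i$ is irreducible and contained in $\overline{\SY}$, so it was blown up and is a divisor. But then it is a single component of $\overline{\SY}$, which cannot contain a hyperplane of $\OSA$ in its local arrangement. This is a contradiction, so no factor mixes the two arrangements.

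Group the factors into those involving only $\OSA$ (giving $Z_1$ and $A$) and those involving only $\overline{\SY}$ (giving $Z_2$ and $B$). This yields $U=Z_1\times Z_2$ with $\OSA\cap U=A\times Z_2$ and $Y\cap U=Z_1\times B$.

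\textbf{Main obstacle.} The hard step is the combinatorial claim that irreducibility is preserved after blow-ups. Concretely, an irreducible stratum contained in $\overline{\SY}$ at the final stage must be either a single divisor or the strict transform of an element of $\SG$. This is what rules out a mixed irreducible factor. I would try to prove it by tracking how the local central arrangement at a point of $E_Z$ decomposes: it is the product of the projectivized arrangement $\SB_Z$ (blown up) and the transversal part. Then I would check that a hyperplane of $\tilde{\SA}$ through a point of $L_\infty$ lies in the same irreducible component as $L_\infty$ only through a stratum that has already been blown up.
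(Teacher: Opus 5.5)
Your overall route is the paper's. The paper also tracks the irreducible strata of codimension $\geq 2$ contained in $\SY^{(k)}$. It uses \cite[Lemma 3.5]{dupont2017relative} to see that each blow-up removes its center from this set and creates no new ones; this is your invariant (2) and your ``main obstacle''. It then excludes mixed irreducible factors exactly as you do.

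The genuine problem is your invariant (1): it is false as stated, and your normal-crossing step is deduced from it. For a stratum $S\subseteq\SY^{(k)}$, exactly one irreducible factor of $S$ lies in $\SY^{(k)}$. The remaining factors are strata of $\tilde{\SA}^{(k)}$ not contained in $\SY^{(k)}$, hence not in $\SG^{(k)}$. So the minimal elements of $\SG^{(k)}$ containing $S$ generally cut out something strictly larger than $S$.

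\begin{itemize}
\item Already at $k=0$ it fails. In Example \ref{example}, the line $\tilde{K}_3\cap L_\infty$ lies only on $\tilde{K}_3$ and $L_\infty$. It is therefore reducible and not in $\SG$, and the only element of $\SG$ containing it is $L_\infty$.
\item At the final stage, $\overline{\SG}$ is the set of components of $\overline{\SY}$. Invariant (1) would then assert that every stratum inside $\overline{\SY}$ is an intersection of components of $\overline{\SY}$. This fails for the point $\overline{L}_1\cap Y_{P_{12}}$ in Example \ref{example1}, which lies on $Y_{P_{12}}$ alone.
\end{itemize}

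Your appeal to the unique decomposition overlooks precisely these factors coming from $\tilde{\SA}$. Likewise, the standard fact that blowing up a minimal element preserves a building set does not apply to $\SG^{(k)}$ alone.

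The repair is easy, and there are two ways to do it.
\begin{itemize}
\item Restate (1) for the whole arrangement $\tilde{\SA}^{(k)}\cup\SY^{(k)}$, with $\SG^{(k)}$ enlarged by all irreducible strata not contained in $\SY^{(k)}$; these extra strata are never blown up.
\item More economically, and as in the paper, drop (1) and derive normal crossing from (2), using the argument you already use for the product structure.
\end{itemize}
For the second option, take $x\in\overline{\SY}$ and decompose the smallest stratum through $x$ into irreducible factors. Each component $D$ of $\overline{\SY}$ through $x$ contains exactly one factor. That factor is irreducible and contained in $\overline{\SY}$, hence equals $D$ by (2). So the components of $\overline{\SY}$ through $x$ are distinct factors of a transversal decomposition, and therefore meet transversally.
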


\begin{definition}
\label{lemcomp1}
Such compactifications $X$ are referred to as partial wonderful compactifications at infinity. Moreover, we define the divisor at infinity $Y$ to be the union of all irreducible components of $\tilde{\SY}.$
\end{definition}

\begin{proof}
First, we notice that the condition to be locally a product for $\SA$ is equivalent to the following requirement: for each stratum $S$ of $\tilde{\SA}^{(\infty)} \cup \SY^{(\infty)},$ there exists a decomposition
$$S=S_1 \pitchfork \cdots \pitchfork S_{s_1} \pitchfork T_1 \pitchfork \cdots \pitchfork T_{s_2}$$
such that $S_1,\ldots, S_{s_1}$ are irreducible strata of $\tilde{\SA}^{(\infty)}$ and $T_1, \ldots, T_{s_2}$ are irreducible strata of $\SY^{(\infty)}.$ On the other hand, $\SY^{(\infty)}$ is normal crossing if and only if the only irreducible strata of $\SY^{(\infty)}$ are of codimension 1.

Let $\SI^{(k)}$ be the set of all irreducible strata of codimension greater than 2 of $\tilde{\SA}^{(k)} \cup \SY^{(k)}$ fully contained in a hypersurface of $\SY^{(k)}.$ According to the process, we start by blowing-up points, i.e., strict transforms of elements of $\SG_0.$ If there are $r'\leq r=|\SG_0|$ irreducible points in $\tilde{\SA} \cap \SY$, the elements in $\SG_0$ reduces the number $|\SI^{(0)}|$ by $r'$ by \cite[Lemma 3.5]{dupont2017relative}, i.e., $|\SI^{(r-1)}|=|\SI^{(0)}|-r'.$ For the elements of $\SG_{n-1}^{\operatorname{irr}}$, the same lemma shows that $|\SI^{(k+1)}|=|\SI^{(k)}|-1.$ In particular, after a finite number $t$ of steps, the only irreducible strata fully contained in a hypersurface of $\SY^{(t)}$ are hypersurfaces of $\SY^{(t)}.$ This implies that $\SY^{(t)}$ is normal crossing.

Let $S$ be a stratum admitting a decomposition $S=S_1 \pitchfork \cdots \pitchfork S_s$ where $S_1,\ldots, S_s$ are irreducible strata of $\tilde{\SA}^{(t)} \cup \SY^{(t)}.$ Then by definition of a decomposition, for each hypersurface $Y$ of $\SY^{(t)},$ there exists at most one irreducible stratum $S_i$ such that $Y \subseteq S_i,$ hence $Y=S_i.$ We conclude that all other irreducible strata cannot arise as intersections of hypersurfaces in $\SY^{(t)}.$ In particular, they are irreducible strata of $\tilde{\SA}^{(t)}.$
\end{proof}

\begin{remark}
At each step of the lemma, we choose a minimal irreducible stratum. However, \cite[Lemma 3.2]{Compactification}, and the first part of the proof of \cite[Theorem 1.3]{Compactification} ensures that the resulting variety is independent of the order of the blow-ups, provided they are performed in order of inclusion.
\end{remark}

At each step $k$ of the process, we choose a center $Z^{(k)} \in \SG^{(k)}$ of the blow-up and replace it with an irreducible stratum $Y^{(k)}$ of $\SY^{(k)}.$ In particular, each irreducible stratum of $Y,$ (i.e., irreducible component) is associated either with an irreducible stratum $S$ of $\tilde{\SA} \cup \SY$ fully contained in a hypersurface of $\SY$, (i.e., $S \in \SG_{n-1}^{\operatorname{irr}}$), or with a zero-dimensional stratum $P$ of $\tilde{\SA} \cap \SY$, (i.e., $P \in \SG_0$). We denote $Y_S \in \tilde{\SG_{n-1}^{\operatorname{irr}}},$ respectively $Y_P \in \tilde{\SG}$ the irreducible component of $\tilde{\SG}$ associated with $S$ and $P.$ Conversely, all irreducible components of $Y$ arise this way, i.e., $$Y=\bigcup_{S \in \SG}Y_S.$$
Moreover, for $P \in \SG_0,$ we define $$Y_{P}^\circ:= Y_P \setminus \left( \bigcup_{S \in \SG \setminus \{P\}} Y_{P} \cap \left(Y_S \cap \OSA\right) \right).$$

\subsection{Vanishing of residues at infinity for partial wonderful compactification at infinity}
\label{section2.22}
The main theorem of this section connects the descriptions given in Theorems \ref{thmcharacterization} and \ref{theoremres} and is proven independently. However, Corollary \ref{cor2} provides an alternative proof of the result of Theorem \ref{theoremres} for the case of partial wonderful compactifications at infinity.

\begin{theorem}
\label{lem1}
Let $X$, $Y$ and $\tilde{\SA}$ be as defined in Lemma \ref{lemcomp}.
Then,
\begin{enumerate}
	\item We have an isomorphism: $$\bigoplus_{P \in \SG_0} H^{n-1}(Y_{P}^\circ) \simeq H^{n-1}(\BC^n \setminus \SA);$$
	\item The following diagram commutes:
\[\begin{tikzcd}[row sep=huge,column sep=huge]
	{H^n(\BC^n \setminus \SA)} & &{H^{n-1}(\BC^n \setminus \SA)} \\
	{H^n(X \setminus \OSA, Y \setminus (Y \cup \OSA))} & &{\displaystyle{\bigoplus_{P \in \SG_0}H^{n-1}(Y^\circ_P)},}
	\arrow["\partial", from=1-1, to=1-3]
	\arrow["\simeq", from=1-1, to=2-1]
	\arrow["\simeq", from=1-3, to=2-3]
	\arrow["{\bigoplus_{P \in \SG_0}\Res_{Y_P^\circ}}", from=2-1, to=2-3]
\end{tikzcd}\]
where $\Res_{Y_P^\circ}: H^n(X \setminus \OSA, Y \setminus (Y \cup \OSA)) \to H^{n-1}(Y^\circ_P)$ is the residue morphism.
\end{enumerate}
\end{theorem}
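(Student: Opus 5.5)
Part (1) reduces to a local computation at each point at infinity. For a direction $P\in\SG_0$, the exceptional divisor of the blow-up at $P$ is $\BP^{n-1}_\BC$, read as the projectivized normal directions to $P$ in $\BP^n_\BC$. Choose affine coordinates near $P$ in which $L_\infty=\{x_0=0\}$ and $P=\{x_0=\cdots=x_{n-1}=0\}$. The hyperplanes $\tilde L$ through $P$ are exactly those whose direction in $\BC^n$ contains the vector $v_P$. In these coordinates they become linear hyperplanes through $P$, and $L_\infty$ is one more hyperplane through $P$.

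Hence $E_P\cap(\OSA\cup Y_\infty)$ is the projectivization of the central arrangement $\SA_P\cup\{L_\infty\}$ in $\BC^n$. Further blow-ups of strata $S\in\SG^{\operatorname{irr}}_{>0}$ containing $P$ remove from $E_P$ the loci where the other $Y_S$ meet it, and these loci lie inside $Y_\infty\cap E_P$. So, up to the deleted strata, $Y_P^\circ\simeq \BP^{n-1}\setminus \BP(\SA_P\cup\{L_\infty\})$.

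Dehomogenizing with respect to $L_\infty$ gives $Y_P^\circ\simeq \BC^{n-1}\setminus \SA_P/v_P$. This is the arrangement induced on the quotient $\BC^n/\BC v_P$ by the hyperplanes parallel to $v_P$. I must check that deleting the intersections with the other $Y_S$ does not change this: those intersections should lie in the hyperplane at infinity of $E_P$. Its cohomology is therefore $A_\bullet(\SA/v_P)$ by Proposition \ref{Orlik--Solomon}.

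To identify $\bigoplus_P A_{n-1}(\SA/v_P)$ with $A_{n-1}(\SA)$, I use the Brieskorn decomposition. Since $\SA$ is essential, $A_{n-1}(\SA)=\bigoplus_{T\in\SL_{n-1}(\SA)}A_{n-1}(\SA_T)$, and each stratum $T$ of dimension one is a line with a unique direction $v_P$. Grouping the lines $T$ by direction gives
$$A_{n-1}(\SA)=\bigoplus_{P\in\SG_0}\ \bigoplus_{T\parallel v_P}A_{n-1}(\SA_T).$$
The inner sum is exactly the top Orlik--Solomon component of the essential arrangement $\SA/v_P$. Its points are the lines parallel to $v_P$, with local central arrangements $\SA_T$. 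This proves (1).

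For (2), both maps $H^n(\BC^n\setminus\SA)\to H^{n-1}(\BC^n\setminus\SA)$ split along the Brieskorn decomposition. By Brieskorn purity, each side is generated by products $\omega=d\log f_{i_1}\wedge\cdots\wedge d\log f_{i_n}$ with $L_{i_1}\cap\cdots\cap L_{i_n}$ a point $Q$. So it suffices to compute $\Res_{Y_P^\circ}\omega$ for such generators.

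In the chart at $P$, write $f_i=\ell_i/x_0$ with $\ell_i$ homogeneous linear. Blowing up $P$ with exceptional coordinate $t$, one has $x_0=t\,y_0$ and $\ell_i=t\,\ell_i'$ when $L_i\ni P$, that is, when $v_P\in\vec L_i$. When $L_i\not\ni P$, the function $\ell_i$ is a unit near $E_P$.

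Thus $d\log f_i=d\log\ell_i'-d\log y_0$ has no pole along $t=0$ if $P\in\tilde L_i$. If $P\notin\tilde L_i$, then $d\log f_i=-d\log t-d\log y_0+(\text{regular})$. Since $\omega$ has top degree with $n$ independent factors, exactly one index $j$ has $v_P\notin\vec L_{i_j}$ whenever $\Res_{Y_P}\omega\neq 0$. In that case the remaining $n-1$ hyperplanes contain $v_P$ and meet in the line $T$ through $Q$ parallel to $v_P$.

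Expanding by multilinearity, the residue is, up to a global sign,
$$(-1)^{j+1}\,d\log f_{i_1}\wedge\cdots\wedge\widehat{d\log f_{i_j}}\wedge\cdots,$$
restricted to $E_P\cap\{y_0\ne0\}$. This is the $T$-component of $\partial(e_{i_1}\wedge\cdots\wedge e_{i_n})$ in Lemma \ref{defsequence}, transported through the isomorphism of (1).

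Relations in the Orlik--Solomon algebra take care of the case where several factors lack $v_P$. In that case the Brieskorn component of $\partial$ in direction $P$ vanishes, and the residue vanishes too: taking residues of wedges with two $d\log t$ terms gives $0$. Commutativity of the left vertical arrow with this computation follows from Lemma \ref{lemmainclusion} and the factorization \eqref{adjunction}.

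The main obstacle is the global sign bookkeeping and well-definedness. The residue must be computed on $Y_P^\circ$ after all later blow-ups. One must check that the strict transforms of the $Y_S$ with $S\ni P$ do not create extra components of the polar locus. I would handle this by noting that $Y_P^\circ$ is an open subset of $E_P$ on which the local computation above is valid, since the later blow-up centers are disjoint from it. Then the residue is a restriction of the computation at the first blow-up.
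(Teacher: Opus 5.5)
Your proposal is correct and follows essentially the same route as the paper: you reduce to the single blow-up at each $P\in\SG_0$ (the later centres meet $E_P$ only inside the strict transform of $L_\infty$), identify $Y_P^\circ$ with the complement of the quotient arrangement $\SA_{v}/\BC v$, regroup the Brieskorn decomposition of $A_{n-1}(\SA)$ by directions, and then compute residues of the generators $e_I$ in a blow-up chart. The only difference is bookkeeping in (2): you expand $d\log f_i=d\log \ell_i-d\log x_0$ factor by factor, whereas the paper rewrites $e_I$ in projective generators $\tilde e_I\pm\tilde e_{I\setminus\{i_j\}}\wedge\tilde e_\infty$; also, when two or more factors miss $v_P$, the vanishing comes from $d\log x_0\wedge d\log x_0=0$ together with $d\log\ell_k|_{E_P}=0$ ($\ell_k$ is constant on $E_P$), not merely from the fact that wedges containing two $d\log t$ factors vanish.
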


\begin{proof}
All blow-ups of points are independent, so it suffices to show both results in $\BP_\BC^n$ after the first blow-up at a point $P$. We will perform several blow-ups after this first one according to Lemma \ref{lemcomp}. However, neither of the two cohomology groups changes under these blow-ups. Indeed, for any $k$, let $X^{(k+1)} \overset{\pi}{\to} X^{(k)}$ be the blow-up along some irreducible strata $S^{(k)}$ and $E_{S^{(k)}}$ denote its exceptional divisor.
Then, we have:
$$X^{(k+1)} \setminus (\SA^{(k+1)} \cup \SY^{(k+1)}) \simeq X^{(k+1)} \setminus (\SA^{(k)} \cup \SY^{(k)}),$$
Moreover, let $E_P$ be the exceptional divisor of the blow-up at $P$, $E_P^{(k)},$ $E_P^{(k+1)}:=\tilde{E}_P^{(k)},$ its strict transforms and $$\left(E_P^{(k)}\right)^\circ:=E_P \setminus \left( E_{P}^{(k)} \cap \left(\SY^{(k)} \cup \tilde{\SA}^{(k)}\right)\right) \text{ and } \left(E_P^{(k+1)}\right)^\circ:=E_P \setminus \left( E_{P}^{(k+1)} \cap \left(\SY^{(k+1)} \cup \tilde{\SA}^{(k+1)}\right)\right)$$ be the complement of all other irreducible components of $\SY^{(k)} \cup \tilde{\SA}^{(k)}$ in $E_P^{(k)}$, and $\SY^{(k+1)} \cup \tilde{\SA}^{(k+1)}$ in $E_P^{(k+1)}$, respectively.
Therefore, we have $$(E_P^{(k)})^\circ \simeq (E_P^{(k+1)})^\circ.$$
In particular, both the decomposition and the residue map remain unchanged in $X \setminus (Y \cup \OSA).$ 

\begin{enumerate}
\item We decompose $H^{n-1}(\BC^n \setminus \SA)$ using Brieskorn decomposition into:
$$H^{n-1}(\BC^n \setminus \SA)= \bigoplus_{l \in \SL_{n-1}(\SA)} A_{n-1}(\SA_l).$$
For a direction $v$ of $\BC^n$, define $\SA_v$, the subarrangement of $\SA$ consisting of all hyperplanes $L$ having direction $v.$ More precisely, for a direction vector $v=(v_1, \ldots, v_n),$ the arrangement $\SA_v$ consists of all hyperplanes $L=\{a_1x_1+ \cdots +a_nx_n+a_{n+1}=0\}$ such that $a_1v_1+ \cdots +a_nv_n=0.$ Equivalently, all hyperplanes with direction vector $v=(v_1, \ldots, v_n)$ pass through the same point $P_v=[v_1: \cdots: v_n:0] \in \tilde{\SA} \cap L_\infty$ at infinity. Figure \ref{figdir} illustrates, in dimension 2, the notion of direction. Moreover, we define $$\SA_v=\bigcup_{l \in \SL_{n-1}^v(\SA)} \SA_l,$$
where $\SL_{n-1}^v(\SA)$ denotes the set of lines of $\SA$ having direction $v.$
Equivalently, $\SL^v_{n-1}$ consists of all lines of $\tilde{\SA}$ meeting $P_v$ at infinity. Note that the arrangement $\SA_v$ is empty for all but finitely many $v$ and that, in dimension $n \geq 3,$ a given hyperplane may belong to several arrangements $\SA_v$. Each line of the arrangement has a unique direction, namely, the common direction of all hyperplanes containing it. Therefore, we can group together the corresponding summands in the Brieskorn decomposition to obtain: $$H^{n-1}(\BC^n \setminus \SA)= \bigoplus_{v} A_{n-1}(\SA_v),$$ where the direct sum ranges over all directions of $\BC^n.$
The arrangements $\SA_v$ are invariant under translations in the direction $v.$ Let $\BC v$ denotes the group of translations in the direction $v$ and $$\SA_v^{T}=\SA_v/\BC v \subseteq \BC^{n-1}$$ be the arrangement obtained by quotienting $\SA_v$ by $\BC v.$ Then, there is an isomorphism:
$$A_{n-1}(\SA_v^T) \simeq A_{n-1}(\SA_v).$$
We aim to show that this is moreover isomorphic to $H^{n-1}(E_{P_v}^\circ)$ where $E_{P_v}$ is the exceptional divisor of the blow-up at $P_v  \in \tilde{\SA} \cap L_\infty,$ the point associated with the direction $v$ and $$E_{P_v}^\circ=E_{P_v} \setminus \left( E_{P_v} \cap \left(L_\infty \cup \tilde{\SA}\right) \right).$$
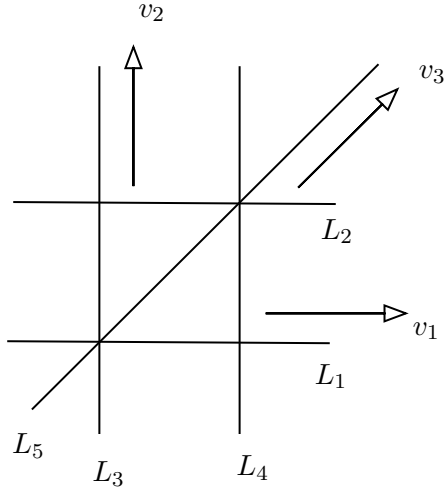
\begin{figure}
\tikzset{every picture/.style={line width=0.75pt}}

\begin{tikzpicture}[x=0.75pt,y=0.75pt,yscale=-1,xscale=1]

\draw    (51,41.67) -- (51,226.67) ;

\draw    (121,41.67) -- (121,226.67) ;

\draw    (166,181) -- (5,180) ;

\draw    (169,111) -- (8,110) ;

\draw    (17.33,214.33) -- (190.33,40.67) ;

\draw   (134,166) -- (193.33,166) -- (193.33,162) -- (204,166) -- (193.33,170) -- (193.33,166) -- (134,166) -- cycle ;

\draw   (150.25,102.75) -- (192.21,60.79) -- (189.38,57.97) -- (199.75,53.25) -- (195.03,63.62) -- (192.21,60.79) -- (150.25,102.75) -- cycle ;
 
\draw   (68,102) -- (68,42.67) -- (64,42.67) -- (68,32) -- (72,42.67) -- (68,42.67) -- (68,102) -- cycle ;

\draw (206,169) node [anchor=north west][inner sep=0.75pt]   [align=left] {$v_{1}$};

\draw (209,40) node [anchor=north west][inner sep=0.75pt]   [align=left] {$v_{3}$};

\draw (69,9) node [anchor=north west][inner sep=0.75pt]   [align=left] {$v_{2}$};

\draw (157,190) node [anchor=north west][inner sep=0.75pt]   [align=left] {$L_{1}$};

\draw (160,117) node [anchor=north west][inner sep=0.75pt]   [align=left] {$L_{2}$};

\draw (46,239) node [anchor=north west][inner sep=0.75pt]   [align=left] {$L_{3}$};

\draw (118,236) node [anchor=north west][inner sep=0.75pt]   [align=left] {$L_{4}$};

\draw (6,225) node [anchor=north west][inner sep=0.75pt]   [align=left] {$L_{5}$};

\end{tikzpicture}
\caption{The hyperplanes $L_1$ and $L_2$ have the same direction $v_1=(1,0)$, the hyperplanes $L_4$ and $L_5$ have direction $v_2=(0,1),$ while the hyperplane $L_5$ has direction $v_2=(1,1).$ In particular, $\SA_{v_1}=\{L_1, L_2\},$ $\SA_{v_2}=\{L_3\}$ and $\SA_{v_3}=\{L_4, L_5\}.$}
\label{figdir}
\end{figure}
Up to a linear change of coordinates, let us consider the direction $v=(1,0, \ldots, 0)$. Let $$L=\{a_1x_1+a_2x_2+ \cdots a_nx_n+a_{n+1}=0\} \in \SA_v$$ be a hyperplane in the direction $v,$ then $a_1=0.$ The arrangement $\SA^T_v$ consists exactly of those hyperplanes of the form
$$L^T=\{a_2x_2+ \cdots +a_n x_n+a_{n+1}=0\} \subseteq \BC^{n-1}.$$ All hyperplanes in $\SA_v$ intersect the hyperplane at infinity $L_\infty$ at the point $P=[1:0: \cdots: 0].$ We perform the blow-up at $P$ in the chart $U=\{x_1=1\} \subseteq \BP_\BC^n.$ In this chart, $L$ becomes $$\overline{L}=\{a_2x_2+\cdots+a_nx_n+a_{n+1}x_{n+1}=0\}.$$ There are $n$ local charts for the blow-up $\pi: \operatorname{Bl}_P(U) \to U$, indexed by $k= 2,\ldots,n+1.$ They are given by $$\pi(x_2,\ldots,x_{n+1})=(x_2x_k, \ldots, x_k, \ldots, x_{n+1}x_k).$$ We consider the index $k=n+1.$
Then, the exceptional divisor is $E_P=\{x_{n+1}=0\} \simeq \BC^{n-1}.$ The only hyperplane not visible in this affine chart is $\tilde{L}_\infty,$ the strict transform of $L_\infty.$ Hence, we view $E_P^\circ$ as the complement of an affine hyperplane arrangement. In this chart, the strict transform of the $\overline{L}$ becomes: 
$$\tilde{L}=\{a_2x_2+ \cdots+a_nx_n+a_{n+1}=0\} \subseteq \BC^{n-1}.$$ It has the same form as the hyperplane $L^T$ of $\SA_v^T,$ concluding the first part of the proof.

To fix notation for the next part, for $L_{i_1}=\{l_{i_1}=0\}, \ldots, L_{i_{n-1}}=\{l_{i_{n-1}}=0\} \in \SA$ hyperplanes such that $L_{i_1} \cap \cdots \cap L_{i_{n-1}}=l$ is a line in the direction $v$, let $$e_i \wedge \cdots \wedge e_{i_{n-1}}=d \log(l_{i_1}) \wedge \cdots \wedge d \log(l_{i_{n-1}})$$ be the generator of the Orlik--Solomon algebra associated with the line $l$.
We denote by $$f_{i_1} \wedge \cdots \wedge f_{i_{n-1}}=d \log(l_{i_1}) \wedge \cdots \wedge d \log(l_{i_{n-1}}),$$ the corresponding generator of the Orlik--Solomon algebra of $E_P^\circ.$ 
Equivalently, in projective coordinates, this generator becomes
$$\tilde{f}_{i_1} \wedge \cdots \wedge \tilde{f}_{i_{n-1}}+\sum_{k=1}^n(-1)^{n-k}\tilde{f}_{i_1} \wedge \cdots \wedge \widehat{\tilde{f}}_{i_k} \wedge \cdots \wedge \tilde{f}_{i_{n-1}} \wedge \tilde{f}_\infty$$
under the change of variables $f_i=\tilde{f}_i-\tilde{f}_\infty.$

\item For the second part of the proof, we again use Brieskorn decomposition:
$$H^n(\BC^n \setminus \SA)=\bigoplus_{Q \in \SL_n(\SA)} A_n(\SA_Q).$$
Let $e_I=e_{i_1} \wedge \cdots \wedge e_{i_n}$ be a generator corresponding to a point $Q=L_{i_1} \cap \cdots \cap L_{i_n} \in \BC^n.$
In projective coordinates, we have the change of variables
$$e_I=\tilde{e}_I + \sum_{j=1}^n (-1)^{n+1-j} \tilde{e}_{I\setminus \{i_j\}} \wedge \tilde{e}_\infty,$$
where $\tilde{e}_i-\tilde{e}_\infty$ are the generators of Orlik--Solomon algebra of the projective arrangement $\tilde{\SA}.$
For each $i_j \subseteq \{i_1, \ldots, i_n\},$ the line $l_{i_j}=L_{i_1} \cap \cdots \cap \widehat{L}_{i_j} \cap \cdots \cap L_{i_n}$ intersect $L_\infty$ at a point $P_{i_j}.$ We have
$$\partial(e_{i_1} \wedge \cdots \wedge e_{i_n})=\sum_{j \in I} (-1)^j{e}_{I \setminus \{i_j\}}.$$ 
By the first part of the proof, we can identify it with the element
\begin{equation}
\begin{cases}
	\label{element}
	(-1)^{n-j}(\tilde{f}_{I \setminus \{i_j\}} +\sum_{k=1, k \ne j}^n(-1)^{n+1-k} \tilde{f}_{I \setminus \{i_j, i_k\}} \wedge \tilde{f}_\infty) \in H^{n-1}(E_{P_{i_j}}^\circ); \\
	0 \text{ elsewhere.}
\end{cases}
\end{equation}
Here, $E_{P_{ij}}$ is the exceptional divisor of the blow-up at $P_{ij}$, $$E_{P_{i_j}}^\circ=E_{P_{i_j}} \setminus \left(E_{P_{i_j}} \cap \left(L_\infty \cap \tilde{\SA}\right)\right)$$ and $f_{I \setminus \{i_j, i_k\}}=\tilde{f}_{i_1} \wedge \cdots \wedge \widehat{\tilde{f}_{i_j}} \wedge \cdots\wedge \widehat{\tilde{f}_{i_k}} \wedge \cdots \wedge \tilde{f}_{i_{n}}$ We aim to show that $\Res_{E_{P_{i_j}}^\circ}(e_I)$ is the same element.

To this end, we consider the same setting as in the first part of the proof: let us assume that $P=[1:0: \cdots: 0].$ We consider the chart $U:=\{x_1=1\} \subseteq \BP_\BC^n.$ There are $n$ local charts for the blow-up $\pi: \operatorname{Bl}_P(U) \to U,$ indexed by $k=2, \ldots, n+1$. They are given by:
$$\pi(x_2,\ldots,x_{n+1})=(x_2x_k, \ldots, x_k, \ldots, x_{n+1}x_k).$$ Moreover, the exceptional divisor is $E_P=\{x_k=0\}.$

We start by showing that a differential $n$-form $\tilde{e}_{i_1} \wedge \cdots  \wedge \tilde{e}_{i_{n}}$ does not have residues along $E_P$ for $P \ne L_{i_1} \cap \cdots \cap L_{i_{n}}.$ Note that here $L_{i_n}$ may be $L_\infty$ (in which case $\tilde{e}_{i_n}$ is $\tilde{e}_\infty$). On $U,$ the form $\tilde{e}_{i_1} \wedge \cdots  \wedge \tilde{e}_{i_{n}}$ takes the form $$\frac{Adx_2 \wedge \cdots \wedge dx_{n+1}}{P(x_2, \ldots,x_{n+1})+Q(x_2, \ldots, x_{n+1})},$$
where $A$ is a constant, $P$ is a homogeneous polynomial of degree $n-1$ and $Q$ is a polynomial satisfying $\deg(Q)<n-1.$ The condition that $Q$ is non-zero is equivalent to the requirement that $H_{i_1} \cap \cdots \cap H_{i_{n}} \ne P.$ The pullback of this form along $\pi$ is
\begin{align*}
	(-1)^i\frac{Ax_i^{n-1}dx_1 \wedge \cdots \wedge dx_{k-1} \wedge dx_{k+1} \wedge \cdots \wedge  dx_{n+1}}{x_k^{n-1}P(x_1, \ldots, 1, \ldots x_{n+1})+\frac{1}{x_k}Q(x_kx_1, \ldots, x_i, \ldots, x_kx_{n+1})} \wedge \frac{dx_k}{x_k}.
\end{align*}
Its residue at $x_k=0$ vanishes
since the degree of $x_k$ of the polynomial $\frac{1}{x_k}Q$ is at most $n-2.$

Let now, $L_{i_2}, \ldots, L_{i_n}, L_\infty$ be $n$ hyperplanes whose intersection is $P=[1:0: \cdots: 0].$ We may assume up to a linear change of coordinates that $L_{i_j}=\{x_{j}=0\}$ and $L_\infty=\{x_{n+1}=0\}$ and hence $\tilde{e}_{i_j}=\frac{dx_{j}}{x_{j}}$ and $\tilde{e}_\infty=\frac{d{x_{n+1}}}{x_{n+1}}.$
On $U$, the form $\tilde{e}_{i_2} \wedge \cdots \wedge \tilde{e}_{\infty}$ is of the form $\frac{dx_2}{x_2} \wedge \cdots \wedge \frac{dx_{n+1}}{x_{n+1}}$ and its pullback along $\pi$ remains the same. Then, we have
\begin{align*}
	\Res_{x_k=0} (\frac{dx_2}{x_2} \wedge \cdots \wedge \frac{dx_k}{x_k} \wedge \cdots \wedge \frac{dx_{n+1}}{x_{n+1}})=\\
	=(-1)^{n+1-k}\frac{dx_2}{x_2} \wedge \cdots \wedge \frac{dx_{k-1}}{x_{k-1}} \wedge \frac{dx_{k+1}}{x_{k+1}}   \wedge \cdots \wedge \frac{dx_{n+1}}{x_{n+1}}.
\end{align*}
This expression is exactly $\tilde{f}_{i_2} \wedge \cdots \wedge \widehat{\tilde{f}}_{i_k} \wedge \cdots \wedge \tilde{f}_{i_n} \wedge \tilde{f}_{\infty}$ which is the only term of \eqref{element} that is visible in the chosen chart. This completes the proof. \qedhere
\end{enumerate}
\end{proof}

\begin{cor}
\label{cor2}
With the same notations as in Proposition \ref{lem1}, the following sequence is exact: $$0 \to H^n(X \setminus \OSA, Y \setminus (Y \cap \OSA)) {\xrightarrow{i}} H^n(X \setminus (Y\cup \OSA)) {{\xrightarrow{{\bigoplus_{S \in \SG}\Res_{Y_S^\circ}}}}} \bigoplus_{S \in \SG} H^{n-1}(Y_S^\circ).$$
\end{cor}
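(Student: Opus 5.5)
The plan is to derive the exactness from Theorem \ref{lem1} together with Theorem \ref{thmcharacterization} and Lemma \ref{lemmainclusion}, and not to rerun the weight argument of Theorem \ref{theoremres}. Injectivity of $i$ comes first. By Lemma \ref{lemmainclusion}, $i$ is identified with the map $H^n_{<\infty}(\BC^n\setminus\SA)\to H^n(\BC^n\setminus\SA)$. By Theorem \ref{thmcharacterization} and Remark \ref{remark}, the latter map is the inclusion $\ker(\partial)\subseteq A_n(\SA)$. For this identification I need that the map of Lemma \ref{lemmainclusion} is compatible with the isomorphism of Theorem \ref{thmcharacterization}. Both come from the natural adjunction map $Rj_!\to Rj_*$ composed with Looijenga's resolution, and I would check this compatibility directly.

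Next I show that the image of $i$ is contained in the kernel of the residue map. Every class in the image of $i$ comes from $H^n(X\setminus\OSA)$ by the factorization \eqref{adjunction}. A class that extends across $Y_S\setminus(Y_S\cap\OSA)$ has zero residue along $Y_S^\circ$, by the Gysin/residue long exact sequence for the smooth divisor $Y_S^\circ$ in $X\setminus(\OSA\cup\bigcup_{T\ne S}Y_T)$. Hence every $\Res_{Y_S^\circ}$ vanishes on the image, and the composite of the two maps is zero.

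The reverse inclusion is the main step. Take $\omega\in H^n(\BC^n\setminus\SA)$ with $\Res_{Y_S^\circ}\omega=0$ for every $S\in\SG$. In particular its residues vanish for all $P\in\SG_0$. By the commutative diagram of Theorem \ref{lem1}(2), this gives $\partial\omega=0$ under the isomorphism of Theorem \ref{lem1}(1). So $\omega\in\ker\partial$, and by the first step $\omega$ lies in the image of $i$. This actually proves more: the kernel of the $\SG_0$-residues is already contained in the image of $i$. Thus the residues along the $Y_S$ with $S\in\SG^{\operatorname{irr}}_{>0}$ add no further condition, which is the refinement announced in the introduction.

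I expect the main obstacle to be the compatibility of identifications. The vertical isomorphism in Theorem \ref{lem1}(2) must be the same one used in Lemma \ref{lemmainclusion}, namely restriction to $X\setminus(Y\cup\OSA)=\BC^n\setminus\SA$. In addition, the residues computed there on the first blow-up must agree with the residues on $X$. The second point is handled in the opening paragraph of the proof of Theorem \ref{lem1}, which shows that the later blow-ups do not change $(E_P)^\circ$ or the residue map. Granting this, the three steps together prove exactness at both spots.
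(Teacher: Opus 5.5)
Your proposal is correct and follows the paper's proof essentially step by step. Exactness of the sequence truncated to $\SG_0$ comes from Lemma \ref{lemmainclusion} together with Theorems \ref{thmcharacterization} and \ref{lem1}; the inclusion of $\im(i)$ in the joint kernel comes from the factorization \eqref{adjunction} through $H^n(X \setminus \OSA)$; and the reverse inclusion follows by discarding the residues indexed by $\SG \setminus \SG_0$.

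The paper also takes for granted the compatibility of identifications you single out (via Remark \ref{remark}). When you check it, note that the map underlying $i$ is induced by the forget-supports morphism $Ra_! \to Ra_*$ applied to $Rj_*\BQ_{\BC^n \setminus \SA}$, as in the proof of Lemma \ref{lem2.14}. It is not induced by $Rj_! \to Rj_*$, which in degree $n$ factors through $H^n(\BC^n)=0$.
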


\begin{proof}
It follows from Lemma \ref{lemmainclusion} and Proposition \ref{lem1} that the following sequence is exact:
$$0 \to H^n(X \setminus \OSA, Y \setminus (Y \cap \OSA)) {\xrightarrow{i}} H^n(X \setminus (Y \cup \OSA)) {\xrightarrow{{\bigoplus_{P \in \SG_0} \Res_{Y_P^\circ}}}} \bigoplus_{P \in \SG_0} H^{n-1}(Y_P^\circ).$$
The map $i$ factors through $H^n(X \setminus \SA)$ by \eqref{adjunction}, thus $\im(i) \subseteq \ker\left(\bigoplus_{S \in \SG} \Res_{Y_S^\circ}\right).$ The reverse inclusion is given by the following diagram:
\[\begin{tikzcd}[row sep=huge,column sep=large]
	0 & {H^n(X \setminus \OSA, Y \setminus (Y \cap \OSA))} & {H^n(X \setminus \OSA)} && {\displaystyle{\bigoplus_{P \in \SG_0} H^{n-1}(Y_P^\circ)}} \\
	&&&& {\displaystyle{\bigoplus_{S \in \SG} H^{n-1}(Y_{S}^\circ)}}
	\arrow[from=1-1, to=1-2]
	\arrow["i",from=1-2, to=1-3]
	\arrow["{\bigoplus_{P \in \SG_0} Res_{Y_P}}"{pos=0.3}, from=1-3, to=1-5]
	\arrow["{\bigoplus_{S \in \SG} \Res_{Y_S^\circ}}"', from=1-3, to=2-5]
	\arrow["p"', two heads, from=2-5, to=1-5]
\end{tikzcd}\]
If $\omega \in \ker(\Res_{Y_S^\circ}),$ for all $S \in \SG$ then, in particular, $\Res_{Y_P^\circ}(\omega)=0$ for all $P \in \SG_0.$ Since the upper sequence is exact it follows that $\omega \in \im(i),$ concluding the proof.
\end{proof}

\subsection{Examples}
\label{Section3.4}

In this section, we will present a few examples of the previous theorem. We start with a two-dimensional example.

\begin{example}
\label{example1}
Consider the essential affine hyperplane arrangement $\SA=\{L_1,L_2,L_3,L_4,L_5\} \subseteq \BC^2$ where $$L_1=\{x_1=0\}, \ L_2=\{x_1-1=0\}, \ L_3=\{x_2=0\}, \ L_4=\{x_2-1=0\} \text{ and } L_5=\{x_1-x_2=0\}.$$ Then the homogeneous components of the Orlik--Solomon algebras are given by
$$A_1(\SA)= \BQ e_1 \oplus \BQ e_2 \oplus \BQ e_3 \oplus \BQ e_4 \oplus \BQ e_5$$
and
$$ A_2(\SA)= \frac{\BQ e_1 \wedge e_3 \oplus \BQ e_1 \wedge e_4 \oplus \BQ e_1 \wedge e_5 \oplus \BQ e_2 \wedge e_3 \oplus \BQ e_2 \wedge e_4 \oplus \BQ e_2 \wedge e_5, \oplus \BQ e_3 \wedge e_5, \oplus \BQ e_4 \wedge e_5}{(e_1 \wedge e_3-e_1 \wedge e_5+ e_3 \wedge e_5, e_2 \wedge e_4-e_2 \wedge e_5+ e_4 \wedge e_5)}.$$
In particular, the kernel of $\partial$ is $$\ker(\partial)=\BQ (e_1 \wedge e_4- e_1 \wedge e_5+e_4 \wedge e_5) \oplus \BQ (e_2 \wedge e_3- e_2 \wedge e_5+e_3 \wedge e_5).$$

Alternatively, we can view this arrangement in $\BP_\BC^2$ with homogeneous coordinates $[x_1:x_2:x_3]$ and the equations become
$$\tilde{L}_1=\{x_1=0\}, \ \tilde{L}_2=\{x_1-x_3=0\}, \ \tilde{L}_3=\{x_2=0\}, \ \tilde{L}_4=\{x_2-x_3=0\} \text{ and } \tilde{L}_5=\{x_1-x_2=0\}.$$
In that setting, the generator of the affine Orlik--Solomon algebra $e_i$ is identified with $\tilde{e_i} - \tilde{e}_\infty,$ the corresponding generator of the projective Orlik--Solomon algebra.
At infinity, the lines $\tilde{L}_1$ and $\tilde{L}_2$ meet at the point $P_{12}=[0:1:0]$ and the lines $L_3$ and $L_4$ meet at $P_{34}=[1:0:0]$. Lastly, the line $\tilde{L}_5$ meets $L_\infty$ at $P_5.$ In particular, the projective arrangement $\tilde{\SA} \cap L_\infty$ is not locally a product for $\SA$, so we perform a blow-up at each point to resolve the singularities. However, it is not necessary to resolve the singularities in $\BC^n.$ We first blow-up the point $P_{12},$ and we work in the chart $U:=\{x_2=1\}.$ The blow-up of $U$ in the intersecting chart is then given by
\begin{align*}
	\pi: \operatorname{Bl}_{P_{12}}U &\to U \\
	(x_1,x_3) & \mapsto [x_1x_3:x_3]
\end{align*}
Let $E_{P_{12}}=\{x_3=0\} \subseteq \BC^n$ denote the exceptional divisor of the blow-up at $P_{12}$ in this chart and set $E^\circ_{P_{12}}=E_{P_{12}} \setminus \left( E_{P_{12}} \cap \tilde{\SA} \right).$
The hyperplanes become
$$\overline{L}_1=\{x_1=0\}, \ \overline{L}_2=\{x_1-1=0\}, \ \overline{L}_3=\emptyset, \ \overline{L}_4=\{1-x_3=0\} \text{ and } \overline{L}_5=\{x_1x_3-1\} \subseteq \BC^2$$
and the intersections with $E_{P_{12}}$ are given by
$$\overline{L}_1 \cap E_{P_{12}}=\{x_1=0\}, \ \overline{L}_2 \cap E_{P_{12}}=\{x_1-1=0\}, \ \overline{L}_3 \cap E_{P_{12}}=\emptyset, \ \overline{L}_4 \cap E_{P_{12}}=\emptyset  \text{ and } \overline{L}_5 \cap E_{P_{12}}=\emptyset \subseteq \BC.$$

An illustration of the three situations, $\SA \subseteq \BC^2,$ $\tilde{\SA} \subseteq \BP_\BC^2$ and $\OSA \subseteq X$ is shown in Figure \ref{figrun1}.
For $f_1$, respectively $f_2$ the generators corresponding to $\overline{L}_1 \cap E_{P_{12}} \subseteq E_{P_{12}} \simeq \BC,$ and $\overline{L}_2 \cap E_{P_{12}} \subseteq E_{P_{12}} \simeq \BC$, respectively, we conclude that $$H^1(E_{P_{12}}^\circ)=\BQ f_1 \oplus \BQ f_2.$$
By considering the blow-ups at the points $P_{34}$ and $P_5$, we obtain the desired decomposition:
$$H^1(\BC^n \setminus \SA)=H^1(E_{P_{12}}^\circ) \oplus H^1(E_{P_{34}}^\circ) \oplus H^1(E_{P_{5}}^\circ).$$

Moreover, we compute the residues:
\begin{align*}
    \Res_{E_{P_{12}}^\circ}(e_1 \wedge e_3)=\Res_{E_{P_{12}}^\circ}(\tilde{e}_1 \wedge \tilde{e}_3-\tilde{e}_1 \wedge \tilde{e}_\infty+ \tilde{e}_3 \wedge \tilde{e}_\infty )= \\
    =\Res_{x_3=0}(-\frac{dx_1}{x_1} \wedge \frac{dx_3}{x_3})=-\frac{dx_1}{x_1}=-f_1.
\end{align*}
By symmetry, we conclude that $$\Res_{E_{P_{12}}^\circ}\oplus \Res_{E_{P_{34}}^\circ} \oplus \Res_{E_{P_{5}}^\circ} (e_1 \wedge e_3)=(-f_1,f_3,0)$$ and similarly,
\begin{align*}
\Res_{E_{P_{12}}^\circ}\oplus \Res_{E_{P_{34}}^\circ} \oplus \Res_{E_{P_{5}}^\circ}(e_1 \wedge e_4)=(-f_1,f_4,0), \\
\Res_{E_{P_{12}}^\circ}\oplus \Res_{E_{P_{34}}^\circ} \oplus \Res_{E_{P_{5}}^\circ}(e_1 \wedge e_5)=(-f_1,0,f_5),\\
\Res_{E_{P_{12}}^\circ}\oplus \Res_{E_{P_{34}}^\circ} \oplus \Res_{E_{P_{5}}^\circ}(e_2 \wedge e_3)=(-f_2,f_3,0), \\
\Res_{E_{P_{12}}^\circ}\oplus \Res_{E_{P_{34}}^\circ} \oplus \Res_{E_{P_{5}}^\circ}(e_2 \wedge e_4)=(-f_2,f_4,0), \\
\Res_{E_{P_{12}}^\circ}\oplus \Res_{E_{P_{34}}^\circ} \oplus \Res_{E_{P_{5}}^\circ}(e_2 \wedge e_5)=(-f_2,0,f_5), \\
\Res_{E_{P_{12}}^\circ}\oplus \Res_{E_{P_{34}}^\circ} \oplus \Res_{E_{P_{5}}^\circ}(e_3 \wedge e_5)=(0,-f_3,f_5), \\
\Res_{E_{P_{12}}^\circ}\oplus \Res_{E_{P_{34}}^\circ} \oplus \Res_{E_{P_{5}}^\circ}(e_4 \wedge e_5)=(0,-f_4,f_5). \\
\end{align*}
This is the same map as $\partial: A_2(\SA) \to A_1(\SA).$ In particular, we have
$$\ker(\partial) \simeq \BQ (e_1 \wedge e_4- e_1 \wedge e_5 +e_4 \wedge e_5) \oplus \BQ (e_2 \wedge e_3- e_2 \wedge e_5 +e_3 \wedge e_5) \simeq \ker(\Res_{E_{P_{12}}^\circ}\oplus \Res_{E_{P_{34}}^\circ} \oplus \Res_{E_{P_{5}}^\circ}).$$
\end{example}

The following example has no bounded regions, and hence the map $\partial$ has a trivial kernel. However, it is still an intersecting case as non-generic behaviors occur at infinity (see Figure \ref{fig2}).

\begin{example}
\label{example}
Consider the essential affine hyperplane arrangement $\SB$ given by $$K_1=\{x_1=0\}, K_2=\{x_1-1=0\}, K_3=\{x_2=0\}, K_4=\{x_3=0\}, K_5=\{x_2+x_3=0\}.$$
In projective coordinates, $\tilde{\SB}$: $$\tilde{K}_1=\{x_1=0\}, \tilde{K}_2=\{x_1-x_4=0\}, \tilde{K}_3=\{x_2=0\}, \tilde{K}_4=\{x_3=0\}, \tilde{K}_5=\{x_2+x_3=0\}.$$ Figure \ref{fig3d} is an illustration of the situation.
\begin{figure}
\tikzset{every picture/.style={line width=0.75pt}}

\begin{tikzpicture}[x=0.75pt,y=0.75pt,yscale=-2,xscale=2]

\draw  [fill={rgb, 255:red, 248; green, 231; blue, 28 }  ,fill opacity=0.66 ] (100.33,149.33) -- (149.56,149.33) -- (119.23,170.33) -- (70,170.33) -- cycle ;

\draw  [fill={rgb, 255:red, 208; green, 2; blue, 27 }  ,fill opacity=0.66 ] (100.33,100.33) -- (149.33,100.33) -- (149.33,149.33) -- (100.33,149.33) -- cycle ;

\draw  [fill={rgb, 255:red, 245; green, 166; blue, 35 }  ,fill opacity=0.66 ] (100.56,149.33) -- (100.56,100.33) -- (70,121.33) -- (70,170.33) -- cycle ;

\draw  [fill={rgb, 255:red, 189; green, 16; blue, 224 }  ,fill opacity=0.66 ] (118.87,120.89) -- (118.87,169.48) -- (100.87,148.92) -- (100.87,100.33) -- cycle ;
 
\draw  [fill={rgb, 255:red, 126; green, 211; blue, 33 }  ,fill opacity=0.66 ] (100.11,100.33) -- (149.33,100.33) -- (119,121.33) -- (69.77,121.33) -- cycle ;

\draw (79,174) node [anchor=north west][inner sep=0.75pt]  [color={rgb, 255:red, 248; green, 231; blue, 28 }  ,opacity=1 ] [align=left] {$\displaystyle K_{1}$};
\draw (120,80) node [anchor=north west][inner sep=0.75pt]  [color={rgb, 255:red, 126; green, 211; blue, 33 }  ,opacity=1 ] [align=left] {$\displaystyle K_{2}$};

\draw (48,134) node [anchor=north west][inner sep=0.75pt]  [color={rgb, 255:red, 245; green, 166; blue, 35 }  ,opacity=1 ] [align=left] {$\displaystyle K_{4}$};

\draw (151,119) node [anchor=north west][inner sep=0.75pt]  [color={rgb, 255:red, 208; green, 2; blue, 27 }  ,opacity=1 ] [align=left] {$\displaystyle K_{3}$};

\draw (121.23,173.33) node [anchor=north west][inner sep=0.75pt]  [color={rgb, 255:red, 189; green, 16; blue, 224 }  ,opacity=1 ] [align=left] {$\displaystyle K_{5}$};
\end{tikzpicture}
\caption{The arrangement ${\SB}$}
\label{fig3d}
\end{figure}
The homogeneous components $A_2(\SB)$ and $A_3(\SB)$ of the Orlik--Solomon algebra are given by:
$$\frac{\BQ e_1 \wedge e_3 \oplus \BQ e_1 \wedge e_4 \oplus \BQ e_1 \wedge e_5 \oplus \BQ e_2 \wedge e_3 \oplus \BQ e_2 \wedge e_4 \oplus \BQ e_2 \wedge e_5 \oplus \BQ e_3 \wedge e_4 \oplus \BQ e_3 \wedge e_5 \oplus \BQ e_4 \wedge e_5}{(e_3 \wedge e_4-e_3 \wedge e_5+e_4 \wedge e_5)},$$
respectively
$$\frac{ \BQ e_1 \wedge e_3 \wedge e_4 \oplus \BQ e_1 \wedge e_3 \wedge e_5 \oplus \BQ e_1 \wedge e_4 \wedge e_5 \oplus \BQ e_2 \wedge e_3 \wedge e_4 \oplus \BQ e_2 \wedge e_3 \wedge e_5 \oplus \BQ e_2 \wedge e_4 \wedge e_5}{(e_1 \wedge e_3 \wedge e_4-e_1 \wedge e_3 \wedge e_5+e_1 \wedge e_4 \wedge e_5, e_2 \wedge e_3 \wedge e_4-e_2 \wedge e_3 \wedge e_5+e_2 \wedge e_4 \wedge e_5)}.$$
One can verify that the kernel of $\partial: A_3(\SB) \to A_2(\SB)$ is trivial, which is consistent with the fact that there are no bounded regions.

As shown in picture \ref{fig2}, the hyperplanes $\tilde{K}_1, \tilde{K}_2$ intersect $L_\infty$ along the same line.
The intersection points of $L_\infty$ with the lines of the arrangements are given by:
\begin{align*}
	P_{123}:=\tilde{K}_1 \cap \tilde{K}_2 \cap \tilde{K}_3 \cap L_\infty=[0:0:1:0], \ P_{124}:=\tilde{K}_1 \cap \tilde{K}_2 \cap \tilde{K}_4 \cap L_\infty=[0:1:0:0], \\
	P_{125}:=\tilde{K}_1 \cap \tilde{K}_2 \cap \tilde{K}_5 \cap L_\infty=[0:1:-1:0] \text{ and } P_{345}:=\tilde{K}_3 \cap \tilde{K}_4 \cap \tilde{K}_5 \cap L_\infty=[1:0:0:0].
\end{align*}
Equivalently, for the vectors $v_{123}:=(0,0,1),$ $v_{124}:=(0,1,0),$ $v_{125}:=(0,1,-1),$ and $v_{345}:=(1,0,0),$ we have $$\SB_{v_{123}}=\{K_1, K_2, K_3\}, \ \SB_{v_{124}}=\{K_1, K_2, K_4\}, \ \SB_{v_{125}}=\{K_1, K_2, K_5\} \text{ and } \SB_{v_{345}}=\{K_3, K_4, K_5\}.$$ These subarrangements induce the desired decomposition:
$$A_2(\SB)= A_2(\SB_{v_{123}}) \oplus A_2(\SB_{v_{124}}) \oplus A_2(\SB_{v_{125}}) \oplus A_2(\SB_{v_{345}}).$$

The blow-ups at $P_{123},P_{124}$ and $P_{125},$ have a different structure than the blow-up at $P_{345}.$
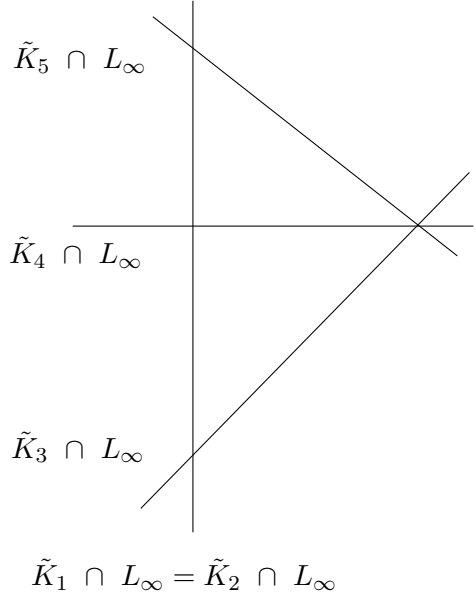
\begin{figure}
\begin{tikzpicture}[x=0.75pt,y=0.75pt,yscale=-1,xscale=1]

\draw    (286,9.67) -- (286,277) ;

\draw    (266,17.67) -- (418,138) ;

\draw    (424,96) -- (260,265) ;
 
\draw    (226,123) -- (426,123) ;

\draw (204,289) node [anchor=north west][inner sep=0.75pt]   [align=left] {$\displaystyle \tilde{K}_{1} \ \cap \ L_{\infty } =\tilde{K}_{2} \ \cap \ L_{\infty }$};

\draw (194,225) node [anchor=north west][inner sep=0.75pt]   [align=left] {$\displaystyle \tilde{K}_{3} \ \cap \ L_{\infty }$};

\draw (193,127) node [anchor=north west][inner sep=0.75pt]   [align=left] {$\displaystyle \tilde{K}_{4} \ \cap \ L_{\infty }$};

\draw (195,29) node [anchor=north west][inner sep=0.75pt]   [align=left] {$\displaystyle \tilde{K}_{5} \ \cap \ L_{\infty }$};

\end{tikzpicture}
\caption{The arrangement $\tilde{\SB} \cap L_\infty$ in $L_\infty$}
\label{fig2}
\end{figure}
One can check that on the exceptional divisor $E_{P_{123}}$ of the blow-up at $P_{123},$ we view two parallel lines and a line transverse to them, as shown in Figure \ref{fig3}. Let $f_1 \wedge f_3$ and $f_2 \wedge f_3$ denote the generators of the homogeneous components of the Orlik--Solomon algebra $H^2(E_{P_{123}}^\circ),$ i.e.,
$$H^2(E_{P_{123}}^\circ)=\BQ f_1 \wedge f_3 \oplus \BQ f_2 \wedge f_3.$$
The same configuration occurs for the exceptional divisors $E_{P_{124}}$ and $E_{P_{125}}$ of the blow-ups at $P_{124},$ and $P_{125},$ respectively.
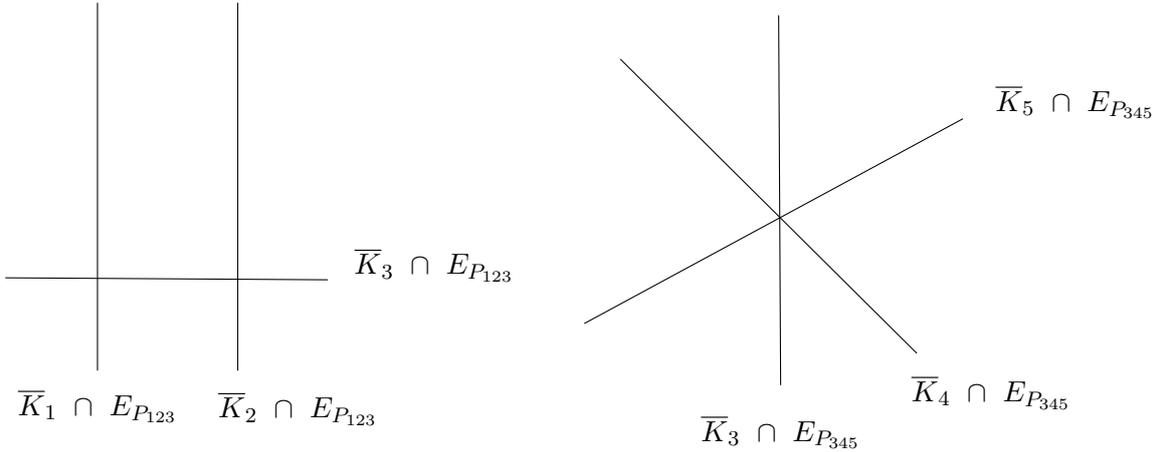
\begin{figure}
\begin{tikzpicture}[x=0.75pt,y=0.75pt,yscale=-1,xscale=1]

\draw    (100,23.67) -- (100,208.67) ;
 
\draw    (170,23.67) -- (170,208.67) ;

\draw    (215,163) -- (54,162) ;

\draw    (361,52) -- (509,200) ;

\draw    (343,185) -- (532,82) ;

\draw    (440,30) -- (441,216) ;

\draw (59,217) node [anchor=north west][inner sep=0.75pt]   [align=left] {$\displaystyle \overline{K}_{1} \ \cap \ E_{P_{123}}$};

\draw (159,218.17) node [anchor=north west][inner sep=0.75pt]   [align=left] {$\displaystyle \overline{K}_{2} \ \cap \ E_{P_{123}}$};

\draw (227,146) node [anchor=north west][inner sep=0.75pt]   [align=left] {$\displaystyle \overline{K}_{3} \ \cap \ E_{P_{123}}$};

\draw (400,230) node [anchor=north west][inner sep=0.75pt]   [align=left] {$\displaystyle \overline{K}_{3} \ \cap \ E_{P_{345}}$};

\draw (505,210) node [anchor=north west][inner sep=0.75pt]   [align=left] {$\displaystyle \overline{K}_{4} \ \cap \ E_{P_{345}}$};

\draw (547,64) node [anchor=north west][inner sep=0.75pt]   [align=left] {$\displaystyle \overline{K}_{5} \ \cap \ E_{P_{345}}$};

\end{tikzpicture}
\caption{From left to right, the arrangements $\overline{\SB} \cap E_{P_{123}}$, and $\overline{\SB} \cap E_{P_{345}}.$}
\label{fig3}
\end{figure}
On the exceptional divisor $E_{P_{345}}$ of the blow-up at $P_{345},$ we instead observe three lines meeting in a single point as illustrated in Figure \ref{fig3}. In particular, $$H^2(E_{P_{345}}^\circ)=\frac{\BQ f_3 \wedge f_4 \oplus \BQ f_3 \wedge f_5 \oplus \BQ f_4 \wedge f_5}{(f_3 \wedge f_4-f_3 \wedge f_5+f_4 \wedge f_5)}.$$
We obtain the required decomposition:
$$H^2(\BC^3 \setminus \SA)=H^2(E_{P_{123}}^\circ) \oplus H^2(E_{P_{124}}^\circ) \oplus H^2(E_{P_{125}}^\circ) \oplus H^2(E_{P_{345}}^\circ).$$
On the residue side, one can, for example, verify that
\begin{align*}
	Res_{E_{P_{123}}^\circ}(e_1 \wedge e_3 \wedge e_4)=-f_1 \wedge f_3,\\
	Res_{E_{P_{124}}^\circ}(e_1 \wedge e_3 \wedge e_4)=f_1 \wedge f_4,\\
	Res_{E_{P_{345}}^\circ}(e_1 \wedge e_3 \wedge e_4)=-f_3 \wedge f_4.
\end{align*}
and all other residues of $e_1 \wedge e_3 \wedge e_4$ vanish. We compare this map with $\partial.$
\end{example}

Note that in this example, the method of Lemma \ref{lemcomp} does not produce a normal crossing divisor $\overline{\SB} \cup \overline{\SY}.$ The difficulty arises from the component $Y_{P_{345}}.$ Nevertheless, the construction remains locally a product for $\SB$. 

\section{Canonical forms}
\label{Section4}
\subsection{Definition of a canonical form}
\label{sec4.1}
The notion of positive geometry was first introduced in \cite{PosGeom} in the context of particle physics. This notion relies on the existence and uniqueness of a canonical form associated with the given positive geometry. In loc.~cit., the definition is constructive (see also \cite{Lam} for further details). In \cite{brown2025positive}, the authors provide a Hodge-theoretic perspective on positive geometries and canonical forms. This article addresses the case where the ambient space $X$ is compact. For an affine hyperplane arrangement, however, the ambient space $\BC^n$ is non-compact. In this section, we recast the definition of canonical forms in the setting of essential affine hyperplane arrangements. A motivating example for studying the affine setting was to understand in which space the integrands of the cosmological correlators, as defined in \cite{cosmological}, naturally take values.

As noted in \cite[The Compactness Assumption]{brown2025positive}, one can compactify $\BC^n$ and work with the Poincaré duals
$$H_n(X, \OSA) \simeq H^n(X \setminus \OSA),$$
where $X$ is a compactification of $\BC^n$ that is locally a product for $\SA$ and $\OSA$ is the closure of $\SA$ in $X$ as in Section \ref{Section3}. 
However, when the compactification is not part of the given data, compactifying might require performing several blow-ups, and it is often convenient to avoid doing so. We present here an alternative definition of the canonical map, which turns out to be equivalent but is expressed only in terms of the affine hyperplane arrangement.

As we have seen in the proof of Theorem \ref{theoremres}, the mixed Hodge structure on $H_n(\BC^n, \SA)$ is pure of weight zero and type $(0,0)$. In particular,
\begin{align}
\label{equ}
F^0 H_n(\BC^n, \SA)_{\BC}=H_n(\BC^n, \SA)_{\BC}=\operatorname{gr}^W_0 H_n(\BC^n, \SA)_{\BC},
\end{align}
and by Poincaré duality, it follows that $H^n_{< \infty}(\BC^n \setminus \SA)_{\BC}$ is pure of weight $2n$ and type $(n,n)$.

\begin{remark}
The Hodge-theoretic definition of canonical forms requires the notion of genus. In the non-compact case, one could define the genus as
$$g(\BC^n, \SA)=\sum_{p<0}h^{p,0}(H_n(\BC^n, \SA)).$$
However, this quantity vanishes for all pairs $(\BC^n, \SA),$ so it plays no role in our setting.
\end{remark}

\begin{definition}
We define the canonical map $\can$ by Poincaré duality:
\begin{align*}
    \operatorname{can}:  H_n(\BC^n, \SA)& \to   H^n_{< \infty}(\BC^n \setminus \SA)_{\BC}\\
     \sigma & \mapsto  \omega_{\sigma}.
\end{align*}
\end{definition}

Note that there is a natural inclusion given by
$$H^n_{< \infty}(\BC^n \setminus \SA)_{\BC} \subseteq H^n(\BC^n, \SA)_\BC= \Omega_{\operatorname{log}}^n(\BC^n \setminus \SA),$$
where the last equality comes from \cite[Equation (21)]{brown2025positive}.
Therefore, as expected, every canonical form is logarithmic. Moreover, the description of Section \ref{section2} shows that the canonical forms are precisely the logarithmic forms whose residues along the divisor at infinity vanish. 

However, this new definition of the canonical map $\can$ agrees with the canonical map obtained after compactifying $\BC^n$. Indeed, let $X$ be a compactification which is locally a product for $\SA,$ and let $Y:= X \setminus \BC^n$ be its divisor at infinity and $\OSA$ the closure of $\SA$ in $X$.
We established in Theorem \ref{theoremres}, that $$\operatorname{gr}^W_{2n}H^n_{< \infty}(\BC^n \setminus \SA)\simeq \operatorname{gr}^W_{2n}H^n(X \setminus \OSA, Y \setminus (Y \cap \OSA)) \simeq \operatorname{gr}^W_{2n}H^n(X \setminus \OSA)$$
and dually, $$\operatorname{gr}^W_{0}H_n(\BC^n, \SA)\simeq \operatorname{gr}^W_{0}H_n(X, \OSA).$$
In particular, Poincaré duality yields a commutative diagram:
\[\begin{tikzcd}
	{H_n(\BC^n, \SA)} & {H^n_{< \infty}(\BC^n \setminus \SA)_{\BC}} \\
	{H_n(X, \OSA)} & {H^n(X \setminus \OSA)_\BC}
	\arrow["{\operatorname{can}}", from=1-1, to=1-2]
	\arrow[from=1-1, to=2-1]
	\arrow[from=1-2, to=2-2]
	\arrow["{\operatorname{can}}", from=2-1, to=2-2]
\end{tikzcd}\]

\subsection{Formula for the canonical form}
\label{sec4.2}

In this section, we rederive the canonical form formula of \cite[Proposition 6.7]{brown2025positive} in the setting of essential affine hyperplane arrangements. The results are similar to those of Section 6 in loc.~cit. We start with a technical lemma, which serves as the analogue of \cite[Proposition 2.14]{brown2025positive}.

\begin{lemma}
\label{lem2.14}
Let $\SA=\{L_1, \ldots, L_m\} \subseteq \BC^n$ be an essential affine hyperplane arrangement and let $L_j \in \SA$. Set $$L_j^\circ=L_j \setminus \left(\bigcup_{i \ne j=1}^m L_i \cap L_j \right).$$ Then, the following diagram commutes:
\begin{equation}
\label{diagram}
\begin{tikzcd}
	{H_n(\BC^n, \SA)_\BC} & {H^n_{< \infty}(\BC^n \setminus \SA)_{\BC}} & {H^n(\BC^n \setminus \SA)_{\BC}}\\
	{H_{n-1}(L_j, L_j \setminus L_{j}^\circ)_\BC} & {H^{n-1}_{< \infty}(L_j^\circ)_{\BC}} & {H^{n-1}(L_j^\circ)_{\BC}}
	\arrow["\can", from=1-1, to=1-2]
	\arrow["{ \partial_{L_j \setminus L_j^\circ}}"', from=1-1, to=2-1]
	\arrow["{ \Res_{L_j^\circ}}", from=1-2, to=2-2]
	\arrow["\can", from=2-1, to=2-2]
    \arrow[hook, from=1-2, to=1-3]
    \arrow[hook, from=2-2, to=2-3]
    \arrow["{  \Res_{L_j^\circ}}", from=1-3, to=2-3]
\end{tikzcd}
\end{equation}
\end{lemma}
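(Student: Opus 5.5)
The diagram splits into two squares, and I will treat them separately. The right square involves the inclusions $H^n_{<\infty}(\BC^n\setminus\SA)\hookrightarrow H^n(\BC^n\setminus\SA)$ and its analogue for $L_j$, together with the two residue maps. I read the middle vertical arrow as the restriction of the right one, so the square commutes once the residue preserves the subspaces. For that I use Theorem \ref{thmcharacterization} to identify $H^n_{<\infty}(\BC^n\setminus\SA)$ with $\ker\partial\subseteq A_n(\SA)$. Here $L_j^\circ$ is the complement of the restricted essential arrangement $\SA''=\SA\cap L_j$ in $L_j\simeq\BC^{n-1}$, so $H^{n-1}_{<\infty}(L_j^\circ)=\ker(\partial:A_{n-1}(\SA'')\to A_{n-2}(\SA''))$.

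On Orlik--Solomon algebras the residue is the map $A_n(\SA)\to A_{n-1}(\SA'')$ of the restriction--deletion sequence. On a generator $e_j\wedge e_I$ it gives $e_{I}|_{L_j}$, and it kills monomials without $e_j$. In the proof of Proposition \ref{propchar} this was already used as a morphism of complexes commuting with $\partial$ (up to a global sign). Hence it maps $\ker\partial$ into $\ker\partial$, and the right square commutes.

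For the left square I follow \cite[Proposition 2.14]{brown2025positive} and recast it through sheaf theory. The map $\partial_{L_j\setminus L_j^\circ}$ is the connecting map of the triple $(\BC^n,\SA,\SA\setminus L_j^\circ)$ composed with excision $H_{n-1}(\SA,\SA\setminus L_j^\circ)\simeq H_{n-1}(L_j,L_j\setminus L_j^\circ)$. The Poincaré--Verdier duality isomorphism used to define $\can$ comes from the same six-functor argument as in the Lemma identifying $H^n_{<\infty}$ with $H_n(\BC^n,\SA)$. Under this duality, the connecting map of the triple corresponds to the Gysin/residue morphism attached to the closed stratum $L_j^\circ\subset\BC^n\setminus(\SA\setminus L_j^\circ)$. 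Concretely, I apply Verdier duality to the distinguished triangle
$$ j_!\BQ_{\BC^n\setminus\SA}\to j'_!\BQ_{\BC^n\setminus(\SA\setminus L_j^\circ)}\to (i_{L_j})_*k_!\BQ_{L_j^\circ}\to, $$
where $j'$ is the open immersion of $\BC^n\setminus(\SA\setminus L_j^\circ)$ and $k:L_j^\circ\hookrightarrow L_j$. Its dual is the triangle $i^!\to\mathrm{id}\to Rj_*j^*$ on $U=\BC^n\setminus(\SA\setminus L_j^\circ)$, pushed forward with compact support in the ambient $\BC^n$. Purity gives $i^!\BQ_U\simeq\BQ_{L_j^\circ}[-2](-1)$, and the boundary map of this dual triangle on cohomology is by definition the residue. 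Taking $Ra_!$ then gives the left square up to a universal sign.

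The main obstacle is to check that the residue defined this way, as a map $H^n_c(\BC^n,Rj_*\BQ)\to H^{n-1}_c(L_j,Rk_*\BQ)$ on cohomology at finite distance, agrees with the classical residue of logarithmic forms on $H^n(\BC^n\setminus\SA)$. This is exactly what makes the outer rectangle consistent. I would handle it by noting that all maps are morphisms of mixed Hodge structures and that the groups are pure of type $(n,n)$, as in the proof of Theorem \ref{theoremres}. It then suffices to compare on the Orlik--Solomon generators $d\log$-forms, which is the local computation $\Res\,(d\log f_j\wedge\eta)=\eta|_{L_j}$.
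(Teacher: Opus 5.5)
Your left square follows the paper's route: Verdier-dualize the triangle of the triple $(\BC^n,\SA,\SA\setminus L_j^\circ)$ and apply $Ra_!$. The rest of the argument, however, has a genuine gap.

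Reading the middle arrow as the restriction of the classical residue makes the right square tautological. Your check that the restriction--deletion map sends $\ker\partial$ to $\ker\partial$ is correct. But the middle map that Verdier duality actually produces in your left square is a priori a different map. It is the connecting morphism $\Res^{\mathrm{sh}}$ obtained by applying $Ra_!$ to $Rj'_*$ of the Gysin triangle $i_*i^!\BQ_U\to\BQ_U\to Rj_{U*}\BQ_{\BC^n\setminus\SA}\to$ on $U$, where $i\colon L_j^\circ\hookrightarrow U$ and $j_U\colon \BC^n\setminus\SA\hookrightarrow U$. So everything rests on $\Res^{\mathrm{sh}}=\Res|_{H^n_{<\infty}}$, and the argument you sketch for this fails.
\begin{itemize}
\item Purity carries no information. $H^n(\BC^n\setminus\SA)$ and $H^{n-1}(L_j^\circ)(-1)$ are both sums of copies of $\BQ(-n)$, so every $\BQ$-linear map between them is a morphism of mixed Hodge structures.
\item You cannot ``compare on the $d\log$ generators''. $\Res^{\mathrm{sh}}$ is only defined on $H^n_{<\infty}(\BC^n\setminus\SA)=\ker\partial$, which contains none of the monomials $\omega_I$ (e.g.\ $\partial(e_1\wedge e_3)=e_3-e_1\neq 0$ in Example \ref{runningexample}). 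Computing $\Res^{\mathrm{sh}}$ on an explicit element of $\ker\partial$ already requires the compatibility you are trying to prove.
\end{itemize}

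The missing step is the one the paper uses for the right square. Apply the natural transformation $Ra_!\to Ra_*$ to the same triangle $Rj'_*\bigl(i_*i^!\BQ_U\to\BQ_U\to Rj_{U*}\BQ\to\bigr)$ that you already wrote down. Since $Ra_*Rj'_*$ computes cohomology of $U$, this gives a morphism of long exact sequences. Its relevant square is exactly the right square of the lemma, with $\Res^{\mathrm{sh}}$ in the middle and the Gysin boundary map, i.e.\ the classical residue, on the right. Your Orlik--Solomon computation then becomes unnecessary; the fact that $\Res$ preserves $\ker\partial$ follows as a consequence.

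Two smaller points:
\begin{itemize}
\item The Verdier dual of $j'_!\BQ_U$ is $Rj'_*\BQ_U[2n](n)$, so the dual triangle is pushed forward by $Rj'_*$, not with compact support. A $j'_!$ would lead to compactly supported cohomology of $U$ rather than to cohomology at finite distance.
\item You only obtain the left square up to a universal sign, whereas the lemma asserts strict commutativity. The paper settles this with a variant of \cite[Proposition 4.12]{Single-valued}; you need that, or an explicit compatible choice of sign conventions for $\partial$, $\Res$ and the duality isomorphisms.
\end{itemize}
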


\begin{proof}
Each vertical map arises from a standard distinguished triangle. The leftmost and middle vertical maps are related by Poincaré duality and hence the corresponding square commutes up to a sign. A variant of \cite[Proposition 4.12]{Single-valued} then shows the strict commutativity under Poincaré duality.

The commutativity of the rightmost square follows by applying the natural morphism of functors $Ra_! \to Ra_*$ to the corresponding distinguished triangles.
\end{proof}

The following proposition is the affine analogue of \cite[Proposition 6.7]{brown2025positive} and will be formulated in a very similar manner. Let us fix an ordering on the hyperplanes, say $\SA=\{L_1, \ldots, L_m\}$. For $I=\{i_1, \ldots, i_n\} \subseteq \{1, \ldots, m\},$ a set of indices, let $L_I=L_{i_1} \cap \cdots \cap L_{i_n}.$ We define the iterated boundary map
$$\partial_{I}=\partial_{L_{i_1} \cap \cdots \cap L_{i_n}} \circ \partial_{L_{i_2} \cap \cdots \cap L_{i_n}} \circ \cdots \circ \partial_{L_{i_n}}: H_n(\BC^n, \SA) \to H_0(L_I) \simeq \BQ.$$
Moreover, we define the logarithmic form associated with $I$:
$$\omega_{I}=\omega_{i_1} \wedge \cdots \wedge \omega_{i_n} \in H^n(\BC^n \setminus \SA)$$
where $L_{i_k}=\{f_{i_k}=0\}$ and $\omega_{i_k}=d\log(f_{i_k})$.

\begin{prop}
\label{prop6.7}
For $\sigma \in H_n (\BC^n, \SA)$, the associated canonical form is 
\begin{equation}
	\label{eqfin}
\omega_\sigma= \sum_{I \text{ nbc set, }
|I|=n} \partial_I (\sigma)\omega_I
\end{equation}
where the sum runs over all non-broken circuit (nbc) sets of $\SA$ as defined in \cite[Definition 3.35]{Orlik-Terao}.
\end{prop}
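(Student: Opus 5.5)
The plan is to mimic the proof of \cite[Proposition 6.7]{brown2025positive}. The canonical form $\omega_\sigma$ lies in $H^n_{<\infty}(\BC^n\setminus\SA)_\BC\subseteq H^n(\BC^n\setminus\SA)_\BC\simeq A_n(\SA)_\BC$. By \cite[Theorem 3.55]{Orlik-Terao}, the forms $\omega_I$ with $I$ an nbc set of size $n$ form a basis of $A_n(\SA)$. So we may write $\omega_\sigma=\sum_I c_I\,\omega_I$, and it remains to show $c_I=\partial_I(\sigma)$.

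To extract $c_I$ we use iterated residues. For an nbc set $I=\{i_1,\ldots,i_n\}$, consider
$$\Res_I=\Res_{L_I}\circ\cdots\circ\Res_{L_{i_n}^\circ}: H^n(\BC^n\setminus\SA)\to H^0(L_I)\simeq\BQ,$$
taken along the flag $L_{i_n}\supset L_{i_{n-1}}\cap L_{i_n}\supset\cdots\supset L_I$. At each step the intermediate stratum is itself the complement of an induced affine arrangement, so the residue lands again in an Orlik--Solomon algebra. The key combinatorial input is the standard duality between the nbc basis and iterated residues along the corresponding flags: $\Res_I(\omega_J)=\delta_{IJ}$ for nbc sets $I,J$, up to a sign fixed by the ordering convention. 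This is the computation underlying \cite[Section 3.5]{Orlik-Terao}, and it reduces to the statement that $\Res_{L_{i_n}}$ acts on $A_\bullet(\SA)$ by contracting $e_{i_n}$ and projecting to $A_{\bullet-1}(\SA\cap L_{i_n})$, where nbc sets of $\SA$ containing $i_n$ correspond to nbc sets of the restriction. From this duality we get $c_I=\Res_I(\omega_\sigma)$.

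Next, Lemma~\ref{lem2.14} is applied iteratively. First apply it to $\SA$ and $L_{i_n}$. Then apply it to the restricted arrangement $\SA\cap L_{i_n}$ inside $L_{i_n}\simeq\BC^{n-1}$ and the hyperplane $L_{i_{n-1}}\cap L_{i_n}$, and continue. This requires checking that each restriction remains essential, which holds because $L_I$ is a point. Combining the commuting squares gives $\Res_I(\omega_\sigma)=\can(\partial_I\sigma)$ in $H^0_{<\infty}(L_I)=H^0(\mathrm{pt})$, where $\can$ on a point is the identity $H_0(\mathrm{pt})\simeq\BQ\to\BQ$. Hence $c_I=\partial_I(\sigma)$, which is \eqref{eqfin}.

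The main obstacle is the bookkeeping in the iteration. At each step, the relative homology group $H_{k}(L,L\setminus L^\circ)$ of Lemma~\ref{lem2.14} must be identified with $H_k$ of the ambient stratum relative to the restricted arrangement. The signs coming from residues, from the boundary maps, and from the ordering in $\omega_I$ must also all match. I would first fix the sign conventions on the base case $n=1$, where $\omega_\sigma=\sum_j\partial_{L_j}(\sigma)\,d\log f_j$ for a bounded interval, and then verify that the two-step case composes compatibly. Induction on $n$ then handles the general case.
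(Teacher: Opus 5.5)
Your proposal is correct and follows the paper's own proof. You iterate Lemma~\ref{lem2.14} along the flag $L_{i_n}\supset L_{i_{n-1}}\cap L_{i_n}\supset\cdots\supset L_I$ to get $\Res_I(\omega_\sigma)=\partial_I(\sigma)$, then read off the coefficients via the duality $\Res_I(\omega_J)=\pm\delta_{IJ}$ between iterated residues and the nbc basis, which the paper cites from \cite[Proposition 3.6]{Szenes} and \cite[Theorem 3.55]{Orlik-Terao}. Your sketch of that duality is fine, but a deletion--restriction bijection on nbc sets is only standard when $L_{i_n}$ is the last hyperplane in the ordering. The cleaner argument is that $\Res_I(\omega_J)$ vanishes unless $J$ is adapted to the flag of $I$, together with the fact that for an nbc set $i_k$ is minimal in the flat spanned by $i_k,\dots,i_n$.
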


\begin{proof}
As in \cite[Proposition 6.7]{brown2025positive}, we iterate Lemma \ref{lem2.14} to find the following commutative diagram:
\begin{equation}
\label{diag2}
	\begin{tikzcd}
	{H_n(\BC^n, \SA)_\BC} & {H^n_{< \infty}(\BC^n \setminus\SA)_\BC} & {H^n(\BC^n \setminus \SA)_\BC} \\
	{\bigoplus_{I \text{ nbc set,} |I|=n}H_0(L_I)_\BC} & {\bigoplus_{I \text{ nbc set}, |I|=n}H^0_{< \infty}(L_I)_\BC} & {\bigoplus_{I \text{ nbc set}, |I|=n}H^0(L_I)_\BC}
	\arrow[from=1-1, to=1-2]
	\arrow["\partial_I",from=1-1, to=2-1]
	\arrow[from=1-2, to=1-3]
	\arrow["\Res_I",from=1-3, to=2-3]
	\arrow[from=2-1, to=2-2]
	\arrow[from=2-2, to=2-3]
\end{tikzcd}
\end{equation}
As in \cite[Proposition 6.7]{brown2025positive}, \cite[Proposition 3.6]{Szenes} and \cite[Theorem 3.55]{Orlik-Terao} show that the rightmost vertical map of the diagram \eqref{diag2} is an isomorphism whose inverse sends the generator $1 \in H^0(L_I)_\BC$, to the logarithmic form $\omega_I \in H^n(\BC^n \setminus \SA)$. In particular, since we have the natural inclusion $$H^n_{< \infty}(\BC^n \setminus \SA) \hookrightarrow H^n(\BC^n \setminus \SA).$$
We conclude that
\begin{equation*}
	\omega_\sigma = \sum_{I \text{ nbc set}, \ |I |=n} \partial_I(\sigma)\omega_I \in H^n_{< \infty}(\BC^n \setminus \SA). \qedhere
\end{equation*}
\end{proof}

\begin{remark}
Proposition \ref{prop6.7} produces forms $\omega$ that lie in $\ker(\partial).$ However, this is not immediate from the formula \eqref{eqfin} that $\omega_\sigma$ satisfies $\partial(\omega_\sigma).$
\end{remark}

\begin{example}
Our initial motivation for this work was to understand the integrand of the cosmological correlators.
To illustrate this, we consider the case of the two-site graph. Following the combinatorial method of \cite{cosmological} this graph gives rise to the line arrangement $\SA \subseteq \BC^2$ defined by $$L_1=\{x_1+X_1+Y=0\}, \ L_2=\{x_2+X_2+Y=0\} \text{ and } L_3=\{x_1+x_2+X_1+X_2=0\},$$ where $x_1, x_2$ are variables in $\BC^2$ and $X_1,X_2$ and $Y$ are constants. We have one bounded region (a triangle) and every $2$-element subset of $\{1,2,3\}$ is a non-broken circuit set. In particular, the canonical form associated with this region is
$$e_1 \wedge e_2 - e_1 \wedge e_3 +e_2 \wedge e_3.$$
In coordinates, this becomes
$$\frac{-2Y dx_1 \wedge dx_2}{(x_1+X_1+Y)(x_2+X_2+Y)(x_1+x_2+X_1+X_2)},$$
which, up to a multiplicative constant, coincides with the two-site cosmological correlator.
\end{example}

Further examples of cosmological correlators will be addressed in a forthcoming work.

\newpage

\appendix

\section{Interpretation in terms of smooth forms}
\label{section3.0}
For this appendix, we temporarily replace the constant sheaf $\BQ_{\BC^n \setminus \SA}$ with $\BR_{\BC^n \setminus \SA}.$ Our goal is to interpret the group $$H^n_{< \infty}(\BC^n, \SA)_{\BR} := H^n_c(\BC^n, Rj_* \BR_{\BC^n \setminus \SA})$$ in terms of real smooth forms $\omega$ for which there exists a compact $K \subseteq \BC^n$ such that $\omega$ vanishes outside $K$. This interpretation motivates the terminology of cohomology at finite distance for $H^n_{< \infty}(\BC^n \setminus \SA)$.

First, we need two technical lemmas using the functors $a_!$ and $j_*,$ where $a: \BC^n \to \{\star\}$ is the map to a point and $j: \BC^n \setminus \SA \to \BC^n$ is the open immersion.

\begin{lemma}
\label{lemma1}
There is a natural isomorphism of derived functors $$R(a_! \circ j_*) \simeq Ra_! \circ Rj_*.$$
\end{lemma}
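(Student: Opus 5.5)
The plan is to apply the Grothendieck composite-functor theorem for the pair $a_! \circ j_*$, working with sheaves of $\BR$-vector spaces. Three facts are needed: $j_*$ is left exact, $a_!$ is left exact, and $j_*$ sends some class of $a_!$-acyclic-generating objects to $a_!$-acyclic objects. Once these are in place, the natural morphism $R(a_!\circ j_*) \to Ra_! \circ Rj_*$, which always exists by the universal property of derived functors, is an isomorphism.

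The key observation is that injective sheaves are both flabby and soft. I would carry out the following steps in order.

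First, I note that $j_*$ has an exact left adjoint $j^{-1}$. Consequently $j_*$ preserves injectives: for an injective sheaf $\SI$ on $\BC^n \setminus \SA$,
$$\operatorname{Hom}(-, j_*\SI) = \operatorname{Hom}(j^{-1}(-), \SI)$$
is exact.

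Second, I recall that $a_!$ is the functor of global sections with compact support, $\Gamma_c(\BC^n,-)$. Since $\BC^n$ is locally compact and Hausdorff, injective sheaves are flabby, flabby sheaves are soft on a paracompact space, and soft sheaves are $\Gamma_c$-acyclic. Hence $j_*\SI$ is $a_!$-acyclic, and Grothendieck's theorem gives $R(a_!\circ j_*) \simeq Ra_!\circ Rj_*$. Working in $D^+$ suffices because all the complexes involved are bounded below.

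Third, I would verify that the comparison map is the canonical natural transformation. This makes the isomorphism natural in the sheaf.

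I expect the main subtlety to be the acyclicity step, specifically the claim that $j_*$ of an injective is $\Gamma_c$-acyclic. If one prefers to avoid it, the alternative is to observe that $j_*$ preserves flabbiness directly, since restriction maps of $j_*\SF$ are restriction maps of $\SF$ on $j^{-1}(U)$. Flabby sheaves on $\BC^n$ are $\Gamma_c$-acyclic, so one can use flabby resolutions instead. That route also makes the later translation into smooth forms easier: the sheaves of smooth forms on $\BC^n\setminus\SA$ are fine, and their pushforwards are soft, hence $\Gamma_c$-acyclic. This is exactly what the appendix needs in order to compute $H^n_c(\BC^n, Rj_*\BR)$ via compactly supported sections of $j_*\mathcal{E}^\bullet$.
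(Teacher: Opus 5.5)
Your proof is correct and follows the paper's route: Grothendieck's composite-functor theorem, reduced to showing that $j_*\SI$ is $a_!$-acyclic for every injective $\SI$. The paper does this via flabbiness ($\SI$ flabby, $j_*$ preserves flabbiness — your alternative — and flabby $\Rightarrow$ c-soft $\Rightarrow$ $a_!$-acyclic), whereas your adjunction argument via $j^{-1}$ shows $j_*\SI$ is itself injective, which is a valid shortcut that makes the subsequent flabby/soft chain unnecessary, since injectives are acyclic for every left exact functor.
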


\begin{proof}
By \cite[Proposition 2.58]{Huybrechts} and \cite[Remark 2.59]{Huybrechts}, it suffices to verify that for every injective sheaves $\SI$ on $\BC^n \setminus \SA$, the sheaf $j_* \SI$ is $a_!$-acyclic. Let $\SI$ be an injective sheaf, then $\SI$ is flabby \cite[Lemma 2.4]{Hartshorne}. Recall that a sheaf $\SF$ on $\BC^n \setminus \SA$ is flabby, if for every open $U \subseteq \BC^n \setminus \SA$, the restriction map $\Gamma(\BC^n \setminus \SA, \SF) \to \Gamma(U,\SF),$
is surjective. If a sheaf $\SF$ is flabby on $\BC^n \setminus \SA,$ then the sheaf $j_* \SF$ is also flabby on $\BC^n$. Indeed, let $U \subseteq \BC^n$ be an open subspace, then the map $\Gamma(\BC^n, j_*\SF) \to \Gamma(U,j_*\SF),$ or equivalently the map $\Gamma(j^{-1}(\BC^n), \SF) \to \Gamma(j^{-1}(U),\SF)$ is surjective. We conclude that for every $\SI$ injective, $j_* \SI$ is flabby, hence $c$-soft by \cite[Proposition 4.3.3]{kashiwaraintroduction}. Therefore, by \cite[Proposition 4.3.9]{kashiwaraintroduction}, $j_*\SI$ is $a_!$-acyclic. 
\end{proof}

\begin{lemma}
\label{lemma2}
Let $\SE^\bullet_{\BC^n \setminus \SA}$ denote the complex of sheaves of smooth real differential forms on $\BC^n \setminus \SA.$ Then, 
\begin{enumerate}
	\item The complex $\SE^\bullet_{\BC^n \setminus \SA}$ is $j_*$-acyclic;
	\item The complex $j_* \SE^\bullet_{\BC^n \setminus \SA}$ is $a_!$-acyclic.
\end{enumerate}
\end{lemma}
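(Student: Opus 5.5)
Both parts reduce to the same basic fact about smooth forms: the sheaves $\SE^k_{\BC^n \setminus \SA}$ are fine, i.e.\ modules over the sheaf of smooth functions, which admits partitions of unity. Pushing forward along $j$ keeps this structure. I will exploit it once on $\BC^n \setminus \SA$ and once on $\BC^n$.

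For (1), I must show $R^q j_* \SE^k_{\BC^n \setminus \SA} = 0$ for $q>0$. The stalk of $R^q j_*\SF$ at $x \in \BC^n$ is the colimit over open neighbourhoods $U \ni x$ of $H^q(U \cap (\BC^n \setminus \SA), \SF)$. Each $U \cap (\BC^n\setminus\SA)$ is an open subset of a smooth manifold, hence paracompact. Restricting $\SE^k$ to it gives a fine sheaf, since smooth partitions of unity exist on any paracompact manifold. Fine sheaves are soft, and soft sheaves on paracompact spaces are acyclic; this is the standard fact, e.g.\ \cite[Proposition 2.5.x]{kashiwaraintroduction}. So every term in the colimit vanishes and $\SE^k$ is $j_*$-acyclic. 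Since this holds degreewise, the complex is $j_*$-acyclic.

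For (2), I would argue exactly as in Lemma \ref{lemma1}: it suffices to show each $j_*\SE^k_{\BC^n\setminus\SA}$ is $c$-soft on $\BC^n$, since $c$-soft sheaves are $a_!$-acyclic by \cite[Proposition 4.3.9]{kashiwaraintroduction}. The sheaf $j_*\SE^k$ is a module over $j_*\mathscr{C}^\infty_{\BC^n\setminus\SA}$. That sheaf receives a ring map from $\mathscr{C}^\infty_{\BC^n}$, given by restricting smooth functions on $U$ to $U\setminus\SA$. Hence $j_*\SE^k$ is a $\mathscr{C}^\infty_{\BC^n}$-module, and therefore fine on the paracompact, locally compact space $\BC^n$.

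I expect the main obstacle to be the step from fine to $c$-soft, since I need to extend a section over a compact $K$ to a global one. A section over $K$ is a section $s$ on some open $V \supseteq K$. Pick a smooth bump function $\rho$ on $\BC^n$ with $\rho \equiv 1$ near $K$ and $\operatorname{supp}\rho \subseteq V$. Then $\rho s$ extends by zero to a global section of $j_*\SE^k$, namely a smooth form on $\BC^n\setminus\SA$ vanishing outside $\operatorname{supp}\rho$, and it agrees with $s$ near $K$. The subtle point is that the extension by zero must glue across $\SA$, not only away from it. Gluing holds because $j_*\SE^k(W)=\SE^k(W\setminus\SA)$ for any open $W$, so no regularity along $\SA$ is required. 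This yields $c$-softness, hence $a_!$-acyclicity, and completes (2).
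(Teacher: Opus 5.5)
Your proof is correct. Part (1) is the paper's argument: the groups $H^q(V\cap(\BC^n\setminus\SA),\SE^k)$ vanish for $q>0$, and these groups compute the stalks of $R^qj_*$. You justify the step the paper calls ``well known'' (paracompactness, then fine $\Rightarrow$ soft $\Rightarrow$ acyclic); just replace the placeholder ``Proposition 2.5.x'' by a real reference.

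Part (2) takes a different route. The paper argues in three steps:
\begin{enumerate}
\item $\SE^k$ is soft on $\BC^n\setminus\SA$;
\item $j_*$ preserves softness, which it justifies by identifying $\Gamma(D,j_*\SF)$ with $\Gamma(j^{-1}(D),\SF)$ for closed $D\subseteq\BC^n$;
\item soft $\Rightarrow$ $c$-soft.
\end{enumerate}
You instead push forward the module structure, so that $j_*\SE^k$ is a $\mathscr{C}^\infty_{\BC^n}$-module. You then prove $c$-softness directly: multiply a section over $V\supseteq K$ by a bump function on $\BC^n$ and extend by zero.

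Your argument is self-contained and avoids the paper's identification, which is not literally valid. For $D=\{p\}$ with $p\in\SA$, the left side is the stalk $(j_*\SE^k)_p\neq 0$, while $j^{-1}(D)=\emptyset$. The paper's claim can be repaired: shrink to an open $W'\supseteq D$ whose closure lies in the domain of the section, and extend from the closed subset $\overline{W'}\setminus\SA$ of $\BC^n\setminus\SA$ using softness there. In that form it applies to any soft sheaf on $\BC^n\setminus\SA$, whereas yours uses the $\mathscr{C}^\infty$-structure specific to differential forms. In exchange, your bump-function step is exactly what underlies the appendix's description of $H^\bullet_{<\infty}$ by forms vanishing outside a compact subset of $\BC^n$.
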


\begin{proof}
\begin{enumerate}
	\item It is well-known that the sheaf $\SE^\bullet_{U}$ is $a_*$-acyclic for every open $U \subseteq \BC^n,$ (see \cite[Section II.3.7]{godement}) that is,
	\begin{equation}
		\label{equationRj}
	R^ia_* \SE^\bullet_{U}=H^i(U, \SE^\bullet_U)=0.
	\end{equation} Recall that $j_*$-acyclicity for $\SE^\bullet_{\BC^n \setminus \SA}$ means that $$R^ij_* \SE^\bullet_{\BC^n \setminus \SA}(U)=0 \text{ for every open } U \subseteq \BC^n \setminus \SA.$$ The sheaf $R^ij_* \SE^\bullet_{\BC^n \setminus \SA}$ is the sheafification of the presheaf
	$$V \mapsto H^i(V \cap (\BC^n \setminus \SA), \SE^\bullet_{|V \cap (\BC^n \setminus \SA)}).$$
	These groups vanish by \eqref{equationRj}. Consequently, the complex $\SE^\bullet_{\BC^n \setminus \SA}$ is $j_*$-acyclic.
	\item The sheaves $\SE^\bullet_{\BC^n \setminus \SA}$ are fine by \cite[Section II.3.7]{godement} and fine sheaves are soft by \cite[Section II.3.7]{godement}. Recall that a sheaf $\SF$ on $\BC^n \setminus \SA$ is soft, if for every closed $C \subseteq \BC^n \setminus \SA$, the restriction map
	$\Gamma(\BC^n \setminus \SA, \SF) \to \Gamma(C,\SF),$
	is surjective. If a sheaf $\SF$ is soft on $\BC^n \setminus \SA,$ then the sheaf $j_* \SF$ is also soft. Indeed, let $D \subseteq \BC^n$ be a closed subspace. The map $\Gamma(\BC^n, j^*\SF) \to \Gamma(D,j_*\SF),$ or equivalently, the map $\Gamma(j^{-1}(\BC^n), \SF) \to \Gamma(j^{-1}(D),\SF)$ is surjective. We conclude that $j_* \SE^\bullet_{\BC^n \setminus \SA}$ is soft. Since by definition, all soft sheaves are $c$-soft, it follows that $j_* \SE^\bullet_{\BC^n \setminus \SA}$ is $c$-soft. Therefore, by \cite[Proposition 4.3.9]{kashiwaraintroduction}, the complex $j_* \SE^\bullet_{\BC^n \setminus \SA}$ is $a_!$- acyclic. \qedhere
\end{enumerate}
\end{proof}

\begin{prop}
\label{lemmaresolution}
For $\BR_{\BC^n \setminus \SA}$, the constant sheaf on $\BC^n \setminus \SA$, we have the following isomorphism:
$$H^k_c(\BC^n, Rj_* \BR_{\BC^n \setminus \SA}) \simeq H^k(\Gamma_c(\BC^n, j_* \SE^\bullet_{\BC^n \setminus \SA})).$$
\end{prop}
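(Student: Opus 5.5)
The plan is to compute the derived functors through explicit acyclic resolutions, using Lemmas \ref{lemma1} and \ref{lemma2}. First, the smooth de Rham complex $\SE^\bullet_{\BC^n \setminus \SA}$ is a resolution of the constant sheaf $\BR_{\BC^n \setminus \SA}$ by the Poincaré lemma. Since $\BR_{\BC^n \setminus \SA} \to \SE^\bullet_{\BC^n \setminus \SA}$ is a quasi-isomorphism, we get $Rj_*\BR_{\BC^n \setminus \SA} \simeq Rj_*\SE^\bullet_{\BC^n \setminus \SA}$.

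By Lemma \ref{lemma2}(1), each $\SE^k_{\BC^n \setminus \SA}$ is $j_*$-acyclic. The standard fact that a bounded-below complex of $F$-acyclic objects computes $RF$ then gives
$$Rj_*\SE^\bullet_{\BC^n \setminus \SA} \simeq j_*\SE^\bullet_{\BC^n \setminus \SA}.$$

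Next, apply $Ra_!$. By Lemma \ref{lemma2}(2), each $j_*\SE^k_{\BC^n \setminus \SA}$ is $c$-soft, hence $a_!$-acyclic. The same principle gives
$$Ra_! \, j_*\SE^\bullet_{\BC^n \setminus \SA} \simeq a_! \, j_*\SE^\bullet_{\BC^n \setminus \SA} = \Gamma_c(\BC^n, j_*\SE^\bullet_{\BC^n \setminus \SA}).$$

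Taking $H^k$ and combining with $H^k_c(\BC^n, Rj_*\BR) = H^k(Ra_! Rj_*\BR)$ yields the claim. Alternatively, one can phrase the argument through Lemma \ref{lemma1}: the composite $a_! \circ j_*$ is evaluated on the resolution $\SE^\bullet$, which is acyclic for the composite by the two parts of Lemma \ref{lemma2}.

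The main point requiring care is that acyclicity must be used in the derived-category sense for complexes rather than for single sheaves. Since $\SE^\bullet$ is bounded (degrees $0$ to $2n$), a term-by-term acyclic bounded complex does compute the derived functor, via the usual Cartan--Eilenberg or spectral sequence argument, so no issue arises.

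A second check is that $j_*\SE^k$ is genuinely $c$-soft on $\BC^n$. This is the content of Lemma \ref{lemma2}(2), so I would simply invoke it.

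Finally, to match the heuristic in the introduction, I would note that $\Gamma_c(\BC^n, j_*\SE^\bullet)$ is precisely the complex of smooth forms on $\BC^n \setminus \SA$ whose support is contained in some compact $K \subseteq \BC^n$.
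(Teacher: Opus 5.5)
Your proposal is correct and is essentially the paper's argument. Both use the de Rham resolution $\SE^\bullet_{\BC^n \setminus \SA}$, the $j_*$-acyclicity of its terms (Lemma~\ref{lemma2}(1)) and the $a_!$-acyclicity of $j_*\SE^\bullet_{\BC^n \setminus \SA}$ (Lemma~\ref{lemma2}(2)), and your two-step computation $Rj_*\BR_{\BC^n \setminus \SA} \simeq j_*\SE^\bullet_{\BC^n \setminus \SA}$ followed by $Ra_!$ makes the composite-functor statement of Lemma~\ref{lemma1} unnecessary. If you want to justify the $c$-softness yourself rather than cite it, the cleanest reason is that each $j_*\SE^k_{\BC^n \setminus \SA}$ is a module over the fine sheaf $\SE^0_{\BC^n}$ of smooth functions on $\BC^n$, hence fine, hence soft and $c$-soft.
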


\begin{proof}
By Lemmas \ref{lemma1} and \ref{lemma2}, the complex $\SE_{\BC^n \setminus \SA}^\bullet$ is a $(a_! \circ j_*)$-acyclic resolution of $\BR_{\BC^n \setminus \SA}.$ Therefore, we have the following isomorphism $$H^k_c(\BC^n, Rj_* \BR_{\BC^n \setminus \SA}) \simeq H^k(\BC^n, a_!j_* \SE^\bullet_{\BC^n \setminus \SA}).$$ Since $j_* \SE_{\BC^n \setminus \SA}^\bullet$ is $a_!$-acyclic by Lemma \ref{lemma2}, we conclude that
\begin{equation*}
H^k_c(\BC^n, Rj_* \BR_{\BC^n \setminus \SA}) \simeq H^k(\BC^n, a_!j_* \SE^\bullet_{\BC^n \setminus \SA}) \simeq H^k(\Gamma_c(\BC^n, j_* \SE^\bullet_{\BC^n \setminus \SA})).
\qedhere
\end{equation*}
\end{proof}

Proposition \ref{lemmaresolution} yields a concrete description of $H^n_{< \infty}(\BC^n \setminus \SA)_\BR.$ It consists of cohomology classes of smooth real forms $\omega$ on $\BC^n \setminus \SA$ such that there exists a compact $K \subseteq \BC^n$ for which $\omega$ vanishes outside $K.$ This is notably different from compactly supported cohomology of the complement $H^n_c(\BC^n \setminus \SA)$ where the compact support is required inside $\BC^n \setminus \SA.$

\printbibliography
\end{document}